\newcommand{\M}{{\mathcal M}}
\newcommand{\N}{\mathbb N}
\newcommand{\pnorm}[2][]{\if #1'' \left|#2\right|_p \else \left|#2\right|_{#1} \fi}
\newcommand{\R}{\mathbb R}
\newenvironment{enumroman}{\begin{enumerate}

}{\end{enumerate}}
\newtheorem{lemma}{Lemma}[section]
\newtheorem{proposition}[lemma]{Proposition}
\newtheorem{theorem}[lemma]{Theorem}
\theoremstyle{definition}
\theoremstyle{Remark}
\newtheorem{Remark}[lemma]{Remark}
\numberwithin{equation}{section}
\title[non-local elliptic systems with Hardy-Littlewood-Sobolev critical nonlinearities]{Existence results
 for non-local elliptic systems with Hardy-Littlewood-Sobolev critical nonlinearities }
\author[Q.Y.\ Hong, Y. Yang, X.D.\ Shang]{QianYu\ Hong,  Yang Yang $^{\dag}$, Xudong Shang}
\address[Q.Y.\ Hong]{School of Science
    \newline\indent
    Jiangnan University
    \newline\indent
    Wuxi, Jiangsu 214122, China}
\email{1031369190@qq.com}
\address[Y. Yang]{School of Science
    \newline\indent
    Jiangnan University
    \newline\indent
    Wuxi, Jiangsu 214122, China}
\email{yynjnu@126.com}
\address[X.D.\ Shang]{ School of Mathematics,
    \newline\indent
     Nanjing
Normal University Taizhou College
    \newline\indent
    Taizgou, Jiangsu 225300, China}
\email{xudong-shang@163.com}
\subjclass[2000]{Primary 35R11, 35R09, 35A15} \keywords{Fractional
Laplacian, Choquard equation, Hardy-Littlewood-Sobolev  critical
exponent, Mountain Pass Theorem, Linking Theorem}
\thanks{$^{\dag}$ Corresponding author. Project supported by NSFC(Nos. 11501252, 11571176, 11601234), Natural Science Foundation of Jiangsu
Province of China for Young Scholar (No.BK20160571), and Qinglan
Project of Jiangsu Province (2016, 2018)}
\begin{document}

\begin{abstract}
In this article, we study  the following nonlinear doubly nonlocal problem involving the fractional Laplacian
in the sense of Hardy-Littlewood-Sobolev inequality
\begin{equation*}
\left\{\begin{aligned}
(-\Delta)^s u & =
au+bv+\frac{2p}{p+q}\int_{\Omega}\frac{|v(y)|^q}{|x-y|^\mu}dy|u|^{p-2}u+2\xi_1\int_{\Omega}\frac{|u(y)|^{2^*_\mu}}{|x-y|^\mu}dy|u|^{2^*_\mu-2}u,&&
\text{in } \Omega;\\
(-\Delta)^s v & =
bu+cv+\frac{2q}{p+q}\int_{\Omega}\frac{|u(y)|^p}{|x-y|^\mu}dy|v|^{q-2}v+2\xi_2\int_{\Omega}\frac{|v(y)|^{2^*_\mu}}{|x-y|^\mu}dy|v|^{2^*_\mu-2}v,&&
\text{in } \Omega;\\
u &=v=0,\text{ in }   \R^N\setminus\Omega,
\end{aligned}\right.
\end{equation*}
where $\Omega$ is a smooth bounded domain in $\R^N$, $N>2s$, $s\in(0,1)$, $\xi_1,\xi_2\geq 0$, $(-\Delta)^s$ is
the well known fractional Laplacian, $\mu\in(0,N)$, $1<p,q\leq 2^*_\mu$ where $2^*_\mu=\frac{2N-\mu}{N-2s}$
is the upper critical exponent in the Hardy-Littlewood-Sobolev inequality. Under suitable assumptions on
different  parameters  $p, q, \xi_1,$ and $ \xi_2$, we are able to prove  some existence   and multiplicity
results  for the above equation by variational methods.
\end{abstract}

\maketitle
\begin{center}
    \begin{minipage}{12cm}
        \small
        \tableofcontents
    \end{minipage}
\end{center}
\medskip

\section{Introduction and main results}
Let $\Omega\subset \R^N $ be a bounded domain with smooth boundary $\partial \Omega$ (at least\ $C^2$), $N>2s$
and $s\in (0,1)$. We consider the following nonlinear doubly nonlocal system involving the
fractional Laplacian:
\begin{equation} \label{1.1}
\left\{\begin{aligned}
(-\Delta)^s u & =
au+bv+\frac{2p}{p+q}\int_{\Omega}\frac{|v(y)|^q}{|x-y|^\mu}dy|u|^{p-2}u+2\xi_1\int_{\Omega}\frac{|u(y)|^{2^*_\mu}}{|x-y|^\mu}dy|u|^{2^*_\mu-2}u,&&
\text{in } \Omega;\\
(-\Delta)^s v & =
bu+cv+\frac{2q}{p+q}\int_{\Omega}\frac{|u(y)|^p}{|x-y|^\mu}dy|v|^{q-2}v+2\xi_2\int_{\Omega}\frac{|v(y)|^{2^*_\mu}}{|x-y|^\mu}dy|v|^{2^*_\mu-2}v,&&
\text{in } \Omega;\\
u &=v=0,\text{ in }   \R^N\setminus\Omega,
\end{aligned}\right.
\end{equation}
 $\mu\in(0,N)$ , $\xi_1,\xi_2\geq 0$ and $1<p,q\leq 2^*_\mu$ where $2^*_\mu=\frac{2N-\mu}{N-2s}$  is the upper critical exponent in the
Hardy-Littlewood-Sobolev inequality. $(-\Delta)^s$ is the fractional Laplacian operator defined as
\begin{equation*}
(-\Delta)^s u(x)=-P.V.\int_{\R^N}\frac{u(x)-u(y)}{|x-y|^{N+2s}}dy
\end{equation*}
where $P.V.$ denotes the Cauchy principal value. This type of operators arise in  many different contexts, such as, among the others,  physical phenomena, stochastic processes, fluid dynamics, dynamical systems, elasticity, obstacle problems, conservation laws, ultra-relativistic limits of quantum mechanics, quasi-geostrophic flows, multiple scattering, materials science and water waves. For more details, we refer to \cite{MRApplebaum,MRGarroni1}.
For any measurable function $u:\R^N\rightarrow\R$, we define the Gagliardo seminorm by setting
\[
[u]_s:=\left(\int_{\R^{2N}}\frac{|u(x)-u(y)|^2}{|x-y|^{N+2s}}dxdy\right)^\frac{1}{2}.
\]
Now, we introduce the fractional Sobolev space (which is a Hilbert space)
 \[
H^s(\R^N)=\{u\in L^2(\R^N):[u]_s<\infty\}, \
\]
with the norm $\|u\|_{H^s}=(\|u\|_{L^2}^2+[u]_s^2)^{\frac{1}{2}}$.
Let the closed subspace
\begin{equation*}
X(\Omega):=\{u\in H^s(\R^N):u=0 \text{  a.e. in } \R^N\backslash\Omega\}.
\end{equation*}
It holds that $X(\Omega)\hookrightarrow L^r(\Omega)$ continuously for $r\in [1, 2^*_s]$ and compactly for $r\in [1, 2^*_s)$, where $2^*_s=\frac{2N}{N-2s}$. Due to the fractional Sobolev inequality, $X(\Omega)$ is a Hilbert space with the inner product given by
\begin{equation*}
\langle u,v\rangle_X:=\int_{\R^{2N}}\frac{(u(x)-u(y)(v(x)-v(y))}{|x-y|^{N+2s}}dxdy,
\end{equation*}
which induces the norm $\|\cdot\|_X=[\cdot]_s$.
We shall denote by $\mu_1$ and  $\mu_2$ the real eigenvalues of the matrix
\begin{equation*}
A:= \left(
 \begin{array}{cc}
 a&b\\
 b&c\\
 \end{array}
 \right)           ,\text{ $a,b,c$ }\in \R.
 \end{equation*}
Without loss of generality, we will assume $\mu_1\leq \mu_2$.  The spectrum of $(-\Delta)^s$, with boundary condition $u=0$
in $\R^N\setminus \Omega$, will be denoted by $\sigma((-\Delta)^s)$,  which consists of the sequence of the
eigenvalues $\{\lambda_{k,s}\}$ satisfying
\begin{equation*}
0<\lambda_{1,s}<\lambda_{2,s}\leq\lambda_{3,s}\leq\ldots \leq \lambda_{j,s}\leq \lambda_{j+1,s}\leq\ldots,
\lambda_{k,s}\rightarrow\infty, \text{ as  } k\rightarrow\infty,
\end{equation*}
and characterized by
\begin{equation*}
\lambda_{1,s}=\inf_{u\in
X(\Omega)\backslash\{0\}}\frac{\int_{\R^{2N}}\frac{|u(x)-u(y)|^2}{|x-y|^{N+2s}}dxdy}{\int_{\R^N}|u(x)|^2dx}
\end{equation*}
and
\begin{equation*}
\lambda_{k+1,s}=\inf_{u\in
\mathbb{P}_{k+1}\backslash\{0\}}\frac{\int_{\R^{2N}}\frac{|u(x)-u(y)|^2}{|x-y|^{N+2s}}dxdy}{\int_{\R^N}|u(x)|^2dx},
\end{equation*}
where
\begin{equation*}
\begin{split}
\mathbb{P}_{k+1}=\{u\in X(\Omega):\langle u,\varphi_{j,s}\rangle_X=0,j=1,2,\ldots,k\},
\end{split}
\end{equation*}
and $\varphi_{k,s}$ denotes the eigenfunction associated to the eigenvalue $\lambda_{k,s}$, for each $k\in \N$.
The following results are true (see \cite{MRVariational}, \cite{MRTheYamabe} and \cite{MR41}).
\begin{enumroman}
\item \label{(1)} If $u\in X(\Omega)$ is a $\lambda_{1,s}$-eigenfunction ($u$ is an eigenfunction corresponding
    to $\lambda_{1,s}$ ), then
either $u(x) > 0$ a.e. in $\Omega$ or $ u(x) <0$ a.e. in $\Omega$;
\item \label{(2)} If $\lambda \in \sigma((-\Delta)^s)\setminus \{\lambda_{1,s}\}$ and $u$ is a
    $\lambda$-eigenfunction, then $u$ changes sign in $\Omega$, and $\lambda$ has finite multiplicity.
\item \label{(3)}  $\varphi_{k,s} \in C^{0,\sigma}(\Omega)$ for some $\sigma\in(0,1)$ and the sequence
    $\{\varphi_{k,s}\}$ is an orthonormal basis in both $L^2(\Omega)$ and $X(\Omega)$.
\end{enumroman}
\begin{Remark}\label{1.1}
For fixed $k\in \N$ we can assume $\lambda_{k,s}<\lambda_{k+1,s}$, otherwise we can suppose that $\lambda_{k,s}$
has multiplicity $l\in \N$, that is
\begin{equation*}
\lambda_{k-1,s}<\lambda_{k,s}=\lambda_{k+1,s}=\ldots=\lambda_{k+l-1,s}<\lambda_{k+l,s},
\end{equation*}
and we denote $\lambda_{k+l,s}=\lambda_{k+1,s}$.
\end{Remark}

In a pioneering paper \cite{MRellipticequations}, Br\'{e}zis and Nirenberg studied the problems of the type
\begin{equation}\label{1.2}
-\Delta u =|u|^{2^*-2}u+\lambda u \text{  in   }\Omega; u=0 \text{  on   } \partial\Omega,
\end{equation}
where $2^*=\frac{N+2}{N-2}$. They proved the existence of nontrivial solutions for  $\lambda >0, N>4$ by
developing some skillful techniques in estimating the Minimax level. This kind of  Br\'{e}zis-Nirenberg problems  have been extensively studied (see, e.g. \cite{MRFortunato,MRComte,MRCostaSilva, MRStruwe1,MRquasilinearX,MRnonsymmetricterm,MRm-Laplacian,MRQuasilinear111,MRTransAmer,MRLower-order,MREABSilva,MRDirichlet,MRZHWei} and references therein). Recently, many well-known Br\'{e}zis-Nirenberg results  in critical local equations  have been extended to  semilinear equations with fractional Laplacian. Specially, we refer to \cite{MRTheYamabe,MRresonantcase,MRBrezisNirenberg,MRBrezisNirenberg2},  where authors studied the following  critical fractional Laplacian problem
\begin{equation}\label{1.3}
(-\Delta)^s u =|u|^{2_s^*-2}u+\lambda u \text{  in   }\Omega; u=0 \text{  in   } \R^N\backslash\Omega,
\end{equation}
and  shown that  problem \eqref{1.3} has a nontrivial weak solution under  the following circumstances:
\begin{enumroman}
\item \label{Theorem 1.2.(1)} $2s<N<4s$ and $\lambda$ is sufficiently large;
\item \label{Theorem 1.2.(2)}  $N=4s$  and $\lambda$ is not an eigenvalue of $(-\Delta)^s u$ in $\Omega$;
\item \label{Theorem 1.2.(2)} $N\geq 4$.
\end{enumroman}
In \cite{MR11}, Gao and Yang  studied the  Br\'{e}zis-Nirenberg type problem  involving the Choquard nonlinearity,  that is
\begin{equation}\label{1.4}
-\Delta u=\lambda u+\left( \int_\Omega\frac{|u|^{2^*_\mu}}{|x-y|^\mu}dy\right)|u|^{2^*_\mu-2}u, \text{ in
}\Omega, u=0,\text{ in }\R^N\backslash\Omega.
\end{equation}
where $\Omega$ is bounded domain in $\R^N$. They proved the
existence, multiplicity and nonexistence results for a range of $\lambda$. Moreover, in \cite{MR88} authors studied a class of critical Choquard equations
\begin{equation}\label{1.5}
-\Delta u=\left( \int_\Omega\frac{|u|^{2^*_\mu}}{|x-y|^\mu}dy\right)|u|^{2^*_\mu-2}u+\lambda f(u), \text{ in } \Omega.
\end{equation}
They proved some existence and multiplicity results for the equation \eqref{1.5} under suitable assumptions  on different types of nonlinearities $f(u)$. In the nonlocal case, Mukherjee and  Sreenadh in \cite{MRmultip} considered  nonlocal counterpart of  problem \eqref{1.4} and obtained  existence, multiplicity and nonexistence results for solutions.


Coming to the system of equations,  elliptic systems involving fractional Laplacian and critical growth nonlinearities have been studied  in \cite{MRNonlinearAnal,MRCriticalBr,MROncriticalsystems,MRspectrum},  extending the Br\'{e}zis and Nirenberg results for variational systems.
Particularly, in \cite{MRspectrum}, Miyagaki and  Pereira studied the following fractional elliptic system
\begin{equation}\label{1.6}
\left\{\begin{aligned}
(-\Delta)^s u & =au+bv+\frac{2p}{p+q}|u|^{p-2}u|v|^q+2\xi_1u|u|^{p+q-2},&&
\text{in } \Omega;\\
(-\Delta)^s v & =bu+cv+\frac{2q}{p+q}|u|^p|v|^{q-2}v+2\xi_2v|v|^{p+q-2},&&
\text{in } \Omega;\\
u &=v=0,\text{ in }   \R^N\setminus\Omega,
\end{aligned}\right.
\end{equation}
extending   \cite{MRCriticalBr}   by means of the Linking Theorem when
\begin{equation*}
\lambda_{k-1,s}\leq\mu_1<\lambda_{k,s}\leq\mu_2<\lambda_{k+1,s}, \text{ if  } k\geq 1.
\end{equation*}
Under theses circumstances,  resonance  and  double resonance phenomena $\lambda_{k-1,s}=\mu_1 $ and  $\lambda_{k,s}=\mu_2$ can occur.
In \cite{MRDoubly},  Giacomoni,  Mukherjee and  Sreenadh discussed the existence and multiplicity of weak solutions for the following  fractional elliptic system involving Choquard  type nonlinearities,
\begin{equation*}
\left\{\begin{aligned}
(-\Delta)^s u & =
\lambda|u|^{q-2}u+\Big(\int_{\Omega} \frac{|v(y)|^{2^*_\mu}}{|x-y|^{\mu}}dy\Big)|u|^{2^*_\mu-2}u,&&
\text{in } \Omega;\\
(-\Delta)^s v & =
\delta|v|^{q-2}v+\Big(\int_{\Omega} \frac{|u(x)|^{2^*_\mu}}{|x-y|^{\mu}}dy\Big)|v|^{2^*_\mu-2}v,&&
\text{in } \Omega;\\
u &=v=0,\text{ in }   \R^N\setminus\Omega,
\end{aligned}\right.
\end{equation*}
where $\lambda,\delta>0$ are real parameters and $1<q<2$.

Motivated by paper \cite{MRspectrum,MRNonlinearAnal}, we discuss the existence and multiplicity results for problem \eqref{1.1} under the conditions   that  (i) $\xi_1=\xi_2=0$,
$1<p,q<2^*_\mu$, (ii) $\xi_1=\xi_2=0$, $p=q=2^*_\mu$, (iii) $\xi_1,\xi_2>0$, $p=q=2^*_\mu$ respectively. The following are the main results.
\begin{theorem}\label{theorem1.2}(Existence I)Assume that $\xi_1=\xi_2=0$, $1<p,q<2^*_\mu$, $b\geq0$ and
$\mu_2<\lambda_{1,s}$. Then problem \eqref{1.1} admits a positive solution.
\end{theorem}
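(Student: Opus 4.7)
The plan is to obtain the solution as a mountain pass critical point of the associated energy functional on the product space $X(\Omega)\times X(\Omega)$, and then to promote it to a positive solution by a truncation plus maximum principle argument. Working with the positive parts $u^+,v^+$ in the Choquard coupling, I consider
\[
J(u,v)=\frac12\|u\|_X^2+\frac12\|v\|_X^2-\frac12\int_\Omega\bigl(au^2+2buv+cv^2\bigr)\,dx-\frac{2}{p+q}\int_\Omega\!\!\int_\Omega\frac{(u^+(x))^{p}(v^+(y))^{q}}{|x-y|^\mu}\,dx\,dy,
\]
whose critical points solve \eqref{1.1} with $u,v\geq 0$ in $\Omega$. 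By the Hardy--Littlewood--Sobolev inequality together with the embedding $X(\Omega)\hookrightarrow L^r(\Omega)$ for $r\in[1,2^*_s]$, and using $1<p,q<2^*_\mu$, the Choquard term is well defined and of class $C^1$ with compact derivative.

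Next I verify the mountain pass geometry. Diagonalising the matrix $A$ by an orthogonal transformation in the $(u,v)$-plane rewrites the quadratic form as $\mu_1\tilde u^2+\mu_2\tilde v^2$, so $\mu_2<\lambda_{1,s}$ and the variational characterisation of $\lambda_{1,s}$ give a constant $\alpha>0$ with
\[
\|u\|_X^2+\|v\|_X^2-\int_\Omega(au^2+2buv+cv^2)\,dx\;\geq\;\alpha\bigl(\|u\|_X^2+\|v\|_X^2\bigr).
\]
Since the Choquard term grows like $\|(u,v)\|^{p+q}$ with $p+q>2$, this yields $J(u,v)\geq\rho>0$ on a small sphere in $X(\Omega)\times X(\Omega)$. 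On the other hand, fixing a positive pair $(u_0,v_0)\in C_c^\infty(\Omega)^2$, the growth $p+q>2$ forces $J(tu_0,tv_0)\to-\infty$ as $t\to\infty$, supplying the second mountain pass condition. Together with $J(0,0)=0$, the geometry is complete.

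Then I verify the Palais--Smale condition at every level $c\in\R$. A standard computation using $\langle J'(u_n,v_n),(u_n,v_n)\rangle$ and the homogeneity $p+q>2$ of the Choquard term shows boundedness of any $(\mathrm{PS})_c$ sequence. Extracting a weakly convergent subsequence $(u_n,v_n)\rightharpoonup(u,v)$ in $X(\Omega)\times X(\Omega)$, the compact embedding $X(\Omega)\hookrightarrow L^r(\Omega)$ for $r\in[1,2^*_s)$ combined with the Hardy--Littlewood--Sobolev inequality upgrades the weak convergence of the Choquard nonlinear terms to strong convergence in the dual space, whence strong convergence of $(u_n,v_n)$ follows. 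The Mountain Pass Theorem therefore produces a nontrivial critical point $(u,v)$ of the truncated functional.

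Finally, testing $J'(u,v)$ against $(u^-,0)$ and $(0,v^-)$ and using the inequality $(u(x)-u(y))(u^-(x)-u^-(y))\le-|u^-(x)-u^-(y)|^2$ for the fractional kernel, together with $b\geq 0$, forces $u^-\equiv v^-\equiv 0$, so $u,v\geq 0$. Since $(u,v)\not\equiv(0,0)$, the strong maximum principle for $(-\Delta)^s$ on $X(\Omega)$ combined with the coupling $b\geq 0$ rules out the semi-trivial cases and yields $u,v>0$ a.e.\ in $\Omega$. The main technical point will be the positivity argument: the coupling term $bv$ in the first equation (and $bu$ in the second) forces one to treat the semi-trivial possibilities carefully, and this is where the sign assumption $b\geq 0$ is used decisively, since it ensures that a nontrivial critical point cannot have one component identically zero without contradicting the maximum principle.
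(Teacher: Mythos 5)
Your proposal is correct, but it takes a genuinely different variational route from the paper's proof. The paper does not apply the Mountain Pass Theorem in this case: it minimises the purely quadratic functional $I(U)=\tfrac12\|U\|_Y^2-\tfrac12\int_\Omega(AU,U)\,dx$ over the constraint manifold $\M=\{U:\int_\Omega\int_\Omega\frac{|u^+(x)|^p|v^+(y)|^q}{|x-y|^\mu}\,dx\,dy=1\}$, using weak lower semicontinuity of $I$ (which is coercive since $\mu_2<\lambda_{1,s}$) together with compactness of the constraint functional (since $1<p,q<2^*_\mu$). It then invokes the Lagrange multiplier theorem, verifies $U^-=0$ by testing the Euler--Lagrange relation with $(u^-,v^-)$, and finally exploits the $(p+q)$-homogeneity of the Choquard coupling to rescale the constrained minimiser into a solution of \eqref{1.1}; positivity follows from the strong maximum principle. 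Your mountain pass approach avoids the Lagrange multiplier and rescaling steps but requires you to verify the geometry and the full Palais--Smale condition, which you correctly observe hold because $p+q>2$ and the growth is subcritical, so the derivative of the Choquard term is compact. Both routes are legitimate in this subcritical regime; the constrained minimisation is arguably leaner because once the minimiser is shown to lie in $\M$ no further compactness is needed, while your approach generalises more easily to settings where an explicit constraint manifold is awkward.

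Two minor points. First, the inequality you state for the fractional kernel, $(u(x)-u(y))(u^-(x)-u^-(y))\le -|u^-(x)-u^-(y)|^2$, has the opposite sign from the paper's elementary inequality $(w_1-w_2)(w_1^--w_2^-)\ge (w_1^--w_2^-)^2$; the discrepancy comes from a different sign convention for $w^-$ (the paper takes $w^-=\min(w,0)\le 0$, whereas your inequality is the one valid for the convention $w^-=\max(-w,0)\ge 0$). The conclusion $u,v\ge 0$ is unaffected, but you should pick one convention and keep it consistent, since you later use $b\ge 0$ and sign information on $u^\pm,v^\pm$. Second, the exclusion of semi-trivial critical points deserves one explicit line: if, say, $v\equiv 0$, then testing the second component gives $bu=0$, and in the case $b=0$ the first equation degenerates to $(-\Delta)^s u=au$ with $a\le\mu_2<\lambda_{1,s}$, forcing $u=0$; so $(u,v)$ being nontrivial yields $u\not\equiv 0$ and $v\not\equiv 0$, after which the strong maximum principle applies componentwise exactly as in the paper.
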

\begin{theorem}\label{theorem1.3}(Existence II) Assume that $\xi_1=\xi_2=0$, $p=q=2^*_\mu$, $b\geq0$  and
$0<\mu_1\leq\mu_2<\lambda_{1,s}$. Then problem  \eqref{1.1} admits a nonnegative solution, provided that either
\begin{enumroman}
\item \label{Theorem 1.2.(1)} $N\geq 4s$ and $\mu_1>0$, or
\item \label{Theorem 1.2.(2)}  $2s<N<4s$ and $\mu_1$ is large enough.
\end{enumroman}
\end{theorem}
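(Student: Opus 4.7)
The plan is to produce a nontrivial critical point of the energy functional
$$I(u,v)=\tfrac12\bigl(\|u\|_X^2+\|v\|_X^2\bigr)-\tfrac12\int_\Omega(au^2+2buv+cv^2)\,dx-\tfrac{1}{2^*_\mu}\iint_{\Omega\times\Omega}\frac{|u(x)|^{2^*_\mu}|v(y)|^{2^*_\mu}}{|x-y|^\mu}\,dxdy$$
on the product Hilbert space $\mathbb H:=X(\Omega)\times X(\Omega)$ via the Mountain Pass theorem. To guarantee the resulting solution is nonnegative, I would first replace $|u|^{2^*_\mu},|v|^{2^*_\mu}$ in $I$ by $(u^+)^{2^*_\mu},(v^+)^{2^*_\mu}$; a critical point of the truncated functional is then automatically nonnegative, because testing its Euler--Lagrange system against $(u^-,v^-)$ and using the standard fractional inequality $\langle u,u^-\rangle_X\le -\|u^-\|_X^2$, together with the positive-definiteness of $A$ coming from $\mu_1>0$ and the assumption $b\ge 0$, forces $u^-=v^-=0$.

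The first step is the mountain-pass geometry. The pointwise quadratic-form inequality $a\xi^2+2b\xi\eta+c\eta^2\le\mu_2(\xi^2+\eta^2)$ together with $\|w\|_{L^2}^2\le\lambda_{1,s}^{-1}\|w\|_X^2$ and the Hardy--Littlewood--Sobolev inequality applied to the Choquard term give
$$I(u,v)\ge\tfrac12(1-\mu_2/\lambda_{1,s})\|(u,v)\|_{\mathbb H}^2-C\|(u,v)\|_{\mathbb H}^{2\cdot 2^*_\mu},$$
so $I\ge\alpha>0$ on a small sphere $\|(u,v)\|_{\mathbb H}=\rho$; for any fixed $(\phi,\psi)\in\mathbb H$ with both components nontrivial, $I(t\phi,t\psi)\to-\infty$ as $t\to+\infty$, furnishing the second mountain-pass endpoint.

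The second step is to set up Palais--Smale compactness below a critical threshold. Define the joint best constant
$$S^*:=\inf_{(u,v)\in\mathbb H\setminus\{0\}}\frac{\|u\|_X^2+\|v\|_X^2}{\Bigl(\iint\frac{|u|^{2^*_\mu}|v|^{2^*_\mu}}{|x-y|^\mu}\,dxdy\Bigr)^{1/2^*_\mu}},\qquad c^*:=\tfrac{N+2s-\mu}{2(2N-\mu)}(S^*)^{(2N-\mu)/(N+2s-\mu)}.$$
PS sequences are bounded because $\mu_2<\lambda_{1,s}$ makes the quadratic part coercive and the nonlinearity is super-quadratic; they converge weakly to a solution, and a Br\'ezis--Lieb argument adapted to the fractional Choquard term rules out loss of mass at any PS level $c<c^*$, yielding strong convergence.

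The decisive step, and the main obstacle, is to show that the MP level $c_{MP}$ satisfies $c_{MP}<c^*$. Let $U_\varepsilon$ be the rescaled family of minimizers of the fractional HLS inequality cut off to lie in $X(\Omega)$ and concentrated at an interior point of $\Omega$, and let $(\alpha,\beta)\in\mathbb S^1$ be an eigenvector of $A$ associated to $\mu_2$ chosen (after a harmless rotation if necessary) so that $\alpha\beta\ne 0$. I would then expand the supremum of $I$ along the ray $t\mapsto(t\alpha U_\varepsilon,t\beta U_\varepsilon)$ and extract an asymptotic of the schematic form
$$\sup_{t>0}I(t\alpha U_\varepsilon,t\beta U_\varepsilon)=c^*+O(\varepsilon^{N-2s})-K\mu_1\int_\Omega U_\varepsilon^2,\qquad K>0.$$
Combined with the known fractional $L^2$-asymptotics $\int_\Omega U_\varepsilon^2\asymp\varepsilon^{2s}$ for $N>4s$, $\asymp\varepsilon^{2s}|\log\varepsilon|$ for $N=4s$, $\asymp\varepsilon^{N-2s}$ for $2s<N<4s$, the mass correction beats the HLS error for every $\mu_1>0$ when $N\ge 4s$, while in the range $2s<N<4s$ the two contributions are both of order $\varepsilon^{N-2s}$ and one needs $\mu_1$ above a constant depending only on $s,\mu,N$. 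The hard part is performing this expansion sharply enough in the fractional HL setting; once $c_{MP}<c^*$ is established, the Mountain Pass theorem produces a nontrivial critical point of the truncated functional, which is the desired nonnegative solution of \eqref{1.1}.
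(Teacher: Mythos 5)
Your overall strategy is the same as the paper's: truncate the Choquard nonlinearity with positive parts, use the Mountain Pass theorem, establish Palais--Smale compactness below a sharp threshold by a Br\'ezis--Lieb type splitting, and push the mountain-pass level below the threshold by plugging in cut-off HLS extremals $u_\varepsilon$ and using the $L^2$-asymptotics. However, two of your concrete choices are wrong and would derail the argument.

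First, the test direction. You propose taking $(\alpha,\beta)\in\mathbb S^1$ an eigenvector of $A$ for $\mu_2$, ``chosen (after a harmless rotation if necessary) so that $\alpha\beta\ne 0$.'' Eigenvectors of a fixed symmetric $A$ cannot be rotated: when $b=0$ and $a\ne c$ (a case allowed by $b\ge 0$), the $\mu_2$-eigenvector is one of $(1,0)$, $(0,1)$, for which $\alpha\beta=0$, the Choquard term along the ray $t\mapsto (t\alpha U_\varepsilon,t\beta U_\varepsilon)$ vanishes identically, and the energy tends to $+\infty$ — so there is no mountain pass along that ray at all. Even if $b>0$ and both components are nonzero, the Choquard term along that ray carries a factor $(\alpha\beta)^{2^*_\mu}\le (1/2)^{2^*_\mu}$, and unless $\alpha\beta=1/2$ (equivalently $a=c$) the resulting supremum is strictly above the compactness threshold at leading order, so the estimate fails. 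The paper avoids both problems by taking the diagonal direction $(tu_\varepsilon,tu_\varepsilon)$ unconditionally and simply lower-bounding the quadratic form $a+2b+c$ by $2\mu_1$, which works for every admissible $A$; the mass correction you want then really does come with a factor $\mu_1$. Your formula $\ldots - K\mu_1\int U_\varepsilon^2$ in fact betrays the diagonal intuition — with a $\mu_2$-eigenvector you would get $\mu_2$ there — so you should simply adopt the diagonal ansatz.

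Second, the threshold constant. You set
\[
c^*=\frac{N+2s-\mu}{2(2N-\mu)}\,(S^*)^{\frac{2N-\mu}{N+2s-\mu}},\qquad S^*=\widetilde S^H_s,
\]
but running the Br\'ezis--Lieb splitting for the system yields, after $\langle J_s'(u_n,v_n),(u_n,v_n)\rangle\to 0$, that the remainder $m$ of the $Y$-norm satisfies $m\ge \widetilde S^H_s\,(m/2)^{1/2^*_\mu}$, so the nonzero alternative is $m\ge 2^{-\frac{N-2s}{N+2s-\mu}}(\widetilde S^H_s)^{\frac{2N-\mu}{N+2s-\mu}}$. Using $\widetilde S^H_s=2S^H_s$ (Proposition~\ref{proposition4.1}(iii)), the correct compactness threshold is
\[
\frac{N+2s-\mu}{2N-\mu}(S^H_s)^{\frac{2N-\mu}{N+2s-\mu}},
\]
which is smaller than your $c^*$ by the factor $2^{\frac{N-2s}{N+2s-\mu}}>1$. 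Showing $c_{MP}<c^*$ with your (too large) $c^*$ would not give compactness; the level estimate and the compactness threshold must agree. Once the diagonal test direction and the threshold $\frac{N+2s-\mu}{2N-\mu}(S^H_s)^{\frac{2N-\mu}{N+2s-\mu}}$ are used, the sup over $t$ equals $\frac{N+2s-\mu}{2N-\mu}(S_{s,\mu_1}(u_\varepsilon))^{\frac{2N-\mu}{N+2s-\mu}}$ and everything reduces to Lemma~\ref{lemma4.5}, i.e.\ $S_{s,\mu_1}(u_\varepsilon)<S^H_s$ in the stated regimes of $N$ and $\mu_1$, exactly as in the paper.
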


\begin{theorem}\label{theorem1.4} (Existence III)
Assume that $\xi_1,\xi_2>0$, $p=q=2^*_\mu$ and
$\lambda_{k-1,s}<\mu_1<\lambda_{k,s}\leq\mu_2<\lambda_{k+1,s}$,  for some $k\in \N$. Then problem \eqref{1.1} admits
a nontrivial solution, if one of the following conditions holds,
\begin{enumroman}
\item \label{Theorem 1.3.(1)} $N\geq4s$ and $\mu_1>0$,
\item \label{Theorem 1.3.(2)} $2s<N<4s$ and $\mu_1$ is large enough.
\end{enumroman}
\end{theorem}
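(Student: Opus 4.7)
The plan is to apply a Linking Theorem to the energy functional
\[
J(u,v)=\tfrac12\bigl([u]_s^2+[v]_s^2\bigr)-\tfrac12\int_\Omega(au^2+2buv+cv^2)\,dx-\mathcal{N}(u,v),
\]
where $\mathcal{N}(u,v)\geq 0$ collects the three Hardy--Littlewood--Sobolev interaction terms (the $u$--$v$ cross term and the two self-interactions weighted by $\xi_1,\xi_2$). First I diagonalize $A$: pick orthonormal eigenvectors $e_1,e_2\in\R^2$ with $Ae_i=\mu_i e_i$ and change variables by $(u,v)=w_1e_1+w_2e_2$, which is a Hilbert isometry of $X(\Omega)\times X(\Omega)$ and reduces the quadratic form to $\mu_1 w_1^2+\mu_2 w_2^2$. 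Write $\bar{P}_j=\mathrm{span}\{\varphi_{1,s},\ldots,\varphi_{j,s}\}$ and $\mathbb{P}_j=\bar{P}_{j-1}^\perp$ in $X(\Omega)$, and split $X(\Omega)\times X(\Omega)=V^-\oplus V^+$ with
\[
V^-:=\bar{P}_{k-1}\times\bar{P}_k,\qquad V^+:=\mathbb{P}_k\times\mathbb{P}_{k+1}.
\]
The chain $\lambda_{k-1,s}<\mu_1<\lambda_{k,s}\leq\mu_2<\lambda_{k+1,s}$ makes the quadratic part of $J$ non-positive on $V^-$ (strictly negative away from $\{0\}\times\mathrm{span}\{\varphi_{k,s}\}$ in the resonant case $\mu_2=\lambda_{k,s}$) and coercive on $V^+$, in the sense $\geq\delta([w_1]_s^2+[w_2]_s^2)$ for some $\delta>0$.

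Next I verify the linking geometry. Since $\mathcal{N}\geq 0$, we have $J\leq 0$ on $V^-$. On $V^+$, the fractional HLS inequality yields $\mathcal{N}(u,v)\leq C\|(u,v)\|^{2\cdot 2^*_\mu}$, so together with coercivity we obtain $\inf_S J\geq\alpha>0$ on $S=\partial B_\rho\cap V^+$ for $\rho$ sufficiently small. For the top constraint I take a direction $e_\varepsilon\in V^+$ obtained by orthogonally projecting onto $V^+$ a smoothly truncated rescaling of the HLS extremal $U_\varepsilon(x)=\varepsilon^{-(N-2s)/2}U(x/\varepsilon)$ centered at an interior point of $\Omega$; since the projection is by a finite-rank operator, the leading order in $\varepsilon$ of the relevant norms is preserved. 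Setting $Q=(\bar{B}_R\cap V^-)\oplus[0,R_1]e_\varepsilon$ and choosing $R,R_1$ large so that $J\leq 0$ on $\partial Q$, the key geometric requirement becomes the strict inequality $\sup_Q J<c^*$, where $c^*$ is the critical threshold associated to the doubly nonlocal critical nonlinearity, computable in terms of $\xi_1,\xi_2$ and of the sharp constant $S_{H,L}$ of the fractional HLS inequality.

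The $(PS)_c$ condition for $c<c^*$ is then established by a concentration-compactness analysis for the HLS-type terms on the system, in the spirit of \cite{MR11,MRmultip}. Boundedness of $(PS)$ sequences follows from the standard combination of $J(u,v)$ with $\langle J'(u,v),(u,v)\rangle$, exploiting the homogeneity $2\cdot 2^*_\mu>2$ of $\mathcal{N}$ and the lower-order control provided by $\mu_2<\lambda_{k+1,s}$. A fractional Brezis--Lieb-type decomposition for the Choquard terms localizes the potential loss of compactness to finitely many concentration points, and the level condition $c<c^*$ excludes each of them. The Linking Theorem then delivers a critical point of $J$ at a level in $[\alpha,c^*)$, which is the desired nontrivial weak solution of \eqref{1.1}.

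The main obstacle is the sharp Brezis--Nirenberg-type estimate $\sup_Q J<c^*$. The leading asymptotics in $\varepsilon$ of $J(t e_\varepsilon+z)$ cancel to exactly $c^*$ at the concentration scale, so the sign of the subleading correction decides feasibility. In dimension $N\geq 4s$, the negative contribution of $-\tfrac{\mu_1}{2}\|\cdot\|_2^2$ is of strictly larger order in $\varepsilon$ than the positive remainder, and any $\mu_1>0$ produces strict inequality. In dimension $2s<N<4s$ the two competing terms are comparable (both of order $\varepsilon^{2s}$), and one must choose $\mu_1$ large to tip the balance. The computation here must additionally accommodate the projection of $U_\varepsilon$ onto the finite-codimension subspace $V^+$ and the interplay of the cross HLS term with the two self-interactions weighted by $\xi_1,\xi_2$, refining the scheme of \cite{MRBrezisNirenberg,MRspectrum,MRmultip} developed for the scalar case.
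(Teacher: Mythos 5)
Your overall strategy---Linking Theorem, a Br\'ezis--Nirenberg energy estimate at the scale of concentrating bubbles, and Br\'ezis--Lieb splitting to recover compactness below a critical threshold---matches the paper's. What is genuinely different is your decomposition: you diagonalize $A$ first and then split asymmetrically, $V^-=\overline{P}_{k-1}\times\overline{P}_k$ and $V^+=\mathbb{P}_k\times\mathbb{P}_{k+1}$ (with $\overline{P}_j=\mathrm{span}\{\varphi_{1,s},\dots,\varphi_{j,s}\}$). In the rotated $w$-coordinates the quadratic form becomes strictly coercive on $V^+$, with coefficients $1-\mu_1/\lambda_{k,s}>0$ and $1-\mu_2/\lambda_{k+1,s}>0$, and non-positive on $V^-$ even at resonance $\mu_2=\lambda_{k,s}$. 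The paper does not diagonalize: it uses the symmetric split $V=(\overline{P}_{k-1})^2$, $W=(\mathbb{P}_k)^2$, on which the quadratic form is \emph{not} coercive over $W$, and it must further decompose $W=Z_k\oplus H$ and run a smallness-in-$\rho$ argument to absorb the $Z_k$-contribution (Lemma~\ref{proposition5.7}). On the linear side your route is therefore cleaner and more transparent.

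The price is paid on the nonlinear side, and here your sketch has a genuine gap rather than just omitted routine detail. The Choquard terms do \emph{not} respect the $A$-eigenbasis rotation: in the $w$-variables the quantity $B(u,v)+\xi_1 B(u,u)+\xi_2 B(v,v)$ becomes a nonlinearity in $w_1e_{11}+w_2e_{21}$ and $w_1e_{12}+w_2e_{22}$, destroying the structure the paper exploits. The paper's proof rests on Lemma~\ref{lemma5.2} (the relation $\widetilde{S}^H_\xi=mS^H_s$, with extremal of the diagonal form $(s_0g_0,t_0g_0)$), which collapses the two-component maximum over $Q$ into the scalar quantity $M_\epsilon$ of Lemma~\ref{lemma5.9}, and that scalar estimate is then closed against the explicit threshold $c^*=\tfrac{N+2s-\mu}{2N-\mu}\bigl(\tfrac12\widetilde{S}^H_\xi\bigr)^{\frac{2N-\mu}{N+2s-\mu}}$ via the bubble asymptotics of Proposition~\ref{Proposition4.2} and Lemma~\ref{lemma4.5}. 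You correctly flag $\sup_Q J<c^*$ as the crux, but you stop precisely where the work begins: you do not compute $c^*$, you do not say which component of $V^+=\mathbb{P}_k\times\mathbb{P}_{k+1}$ carries the concentrating profile $e_\epsilon$ or how the two different projections interact with the cross term, and you do not supply a rotated analogue of Lemmas~\ref{lemma5.2} and~\ref{lemma5.9}. Without those, the dichotomy ``any $\mu_1>0$ if $N\ge 4s$, $\mu_1$ large if $2s<N<4s$'' is asserted, not established. In practice, filling the gap will likely force you to undo the diagonalization when you build the test path $te_\epsilon+z$ and compare with the extremal of $\widetilde{S}^H_\xi$, at which point you are essentially back to the paper's construction with the direction $e=(\widetilde{z}_\epsilon,0)$.
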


\section{Preliminary Stuff}
\subsection{Notations and setting}
Now,we consider the Hilbert space given by the product space
\begin{equation} \label{2.1}
Y(\Omega):=X(\Omega)\times X(\Omega),
\end{equation}
equipped with the inner product
\begin{equation} \label{2.2}
\langle(u,v),(\varphi,\psi)\rangle_Y:=\langle u,\varphi\rangle_X+\langle v,\psi\rangle_X
\end{equation}
and the norm
\begin{equation} \label{2.3}
\|(u,v)\|_Y:=(\|u\|^2_X+\|v\|^2_X)^{\frac{1}{2}}.
\end{equation}
We shall consider $L^m(\Omega)\times L^m(\Omega)$($m>1$) equipped with the standard product norm
\begin{equation} \label{2.4}
\|(u,v)\|_{L^m\times L^m}:=(\|u\|^2_{L^m} +\|v\|^2_{L^m})^{\frac{1}{2}}.
\end{equation}
We recall that
\begin{equation} \label{2.5}
\mu_1|U|^2\leq(AU,U)_{R^2}\leq\mu_2|U|^2,\text{  for all  }U:=(u,v)\in \R^2.
\end{equation}
By a solution of \eqref{1.1} we mean a weak solution, that is, a pair of functions $(u,v)\in Y(\Omega)$ such
that
\begin{equation*}
\langle(u,v),(\varphi,\psi)\rangle_Y-\int_\Omega (A(u,v),(\varphi,\psi))_{\R^2}dx-\int_\Omega \frac{\partial
F}{\partial u}\varphi dx-\int_\Omega \frac{\partial F}{\partial v}\psi dx=0,
\end{equation*}
for all $(\varphi,\psi)\in Y(\Omega)$, where
\begin{equation}\label{2.6}
F(u,v)=\frac{2}{p+q}\int_{\Omega}\frac{|v(y)|^{q}}{|x-y|^\mu}dy|u|^{p}+\frac{1}{2^*_\mu}\left[\xi_1 \int_{\Omega}\frac{|u(y)|^{2^*_\mu}}{|x-y|^\mu}dy|u|^{2^*_\mu}+\xi_2\int_{\Omega}\frac{|v(y)|^{2^*_\mu}}{|x-y|^\mu}dy|v|^{2^*_\mu}\right].
\end{equation}
Now define the functional $J_s:Y(\Omega)\rightarrow \R$ by setting
\begin{eqnarray*}
\begin{aligned}
J_s(U)\equiv J_s(u,v) = &\frac{1}{2}\int_{\R^{2N}}\frac{|u(x)-u(y)|^2+|v(x)-v(y)|^2}{|x-y|^{N+2s}}dxdy \\
&-\frac{1}{2}\int_{\R^N}(A(u,v),(u,v))_{\R^2}dx-\int_{\Omega}F(U)dx,
\end{aligned}
\end{eqnarray*}
whose Fr\'{e}chet derivative is given by
\begin{eqnarray*}
\begin{aligned}
J'_s(u,v)(\varphi,\psi)=&\int_{\R^{2N}}\frac{(u(x)-u(y))(\varphi(x)-\varphi(y))+(v(x)-v(y))(\psi(x)-\psi(y))}{|x-y|^{N+2s}}dxdy\\
&-\int_{\Omega}(A(u,v),(\varphi,\psi))_{\R^2}dx-\frac{2p}{p+q}\int_{\Omega}\frac{|u(x)|^{p-2}u(x)|v(y)|^{q}}{|x-y|^\mu}\varphi dxdy
\\&-\frac{2q}{p+q}\int_{\Omega}\frac{|u(x)|^{p}|v(y)|^{q-2}v(y)}{|x-y|^\mu}\psi dxdy-2\xi_1\int_{\Omega}\frac{|u(x)|^{2^*_\mu-2}u(x)|u(y)|^{2^*_\mu}}{|x-y|^\mu}\varphi dxdy\\
&-2\xi_2 \int_{\Omega}\frac{|v(x)|^{2^*_\mu}|v(y)|^{2^*_\mu-2}v(y)}{|x-y|^\mu}\psi dxdy,\\
\end{aligned}
\end{eqnarray*}
for every $(\varphi,\psi)\in Y(\Omega)$.\\

In this paper, we set the following notation for product space $S\times S:=S^2$ and
\begin{equation} \label{2.7}
w^+(x):=\text{ max }\{w(x),0\},w^-(x):=\text{ min }\{w(x),0\},
\end{equation}
for positive and negative part of a function $w$. Consequently we get $w=w^++w^-$. During chains of
inequalities, universal constants will be denoted by the
same letter $C$ even if their numerical value may change from line to line.
\subsection{Some important conclusions}
Here we list some important conclusions.
The first one is the following well-known Hardy-Littlewood-Sobolev inequality.
\begin{proposition}(Hardy-Littlewood-Sobolev inequality, \cite[Theorem 4.3]{MR19})\label{Proposition 2.1}
Let $t,r>1$ and $0<\mu<N$ with $\frac{1}{t}+\frac{\mu}{N}+\frac{1}{r}=2, f\in L^t(\R^N)$ and $h\in L^r (\R^N)$.
There exists a sharp constant $C(t,N,\mu,r)$, independent of $f,h$ such that
\begin{equation}\label{2.8}
\int_{\R^N}\int_{\R^N} \frac {f(x)h(y)}{|x-y|^{\mu}}dxdy \leq C(t,N,\mu,r)\|f\|_{L^t(\R^N)}\|h\|_{L^r(\R^N)}.
\end{equation}
if $t=r=\frac{2N}{2N-\mu}$ then
\begin{equation*}
C(t,N,\mu,r)=C(N,\mu)=\pi^{\frac{\mu}{2}}\frac{\Gamma(\frac{N}{2}-\frac{\mu}{2})}{\Gamma(N-\frac{\mu}{2})}\left\{\frac{\Gamma(\frac{N}{2})}{\Gamma(N)}\right\}^{-1+\frac{\mu}{N}}.
\end{equation*}
In this case, there is equality in \eqref{2.8} if and only if $f\equiv(constant)h$ and
\begin{equation*}
h(x)=A(\gamma^2+|x-a|^2)^{\frac{-(2N-\mu)}{2}}
\end{equation*}
for some $A\in \mathcal{C} ,0 \neq\gamma\in{\R} $ and $a\in {\R ^N}$.
\end{proposition}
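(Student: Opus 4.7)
The plan is to prove the inequality in two stages: first, establish the stated bound with some finite (not necessarily sharp) constant for the full range of exponents $(t,r)$; second, upgrade to the sharp constant and classify the extremals in the diagonal case $t=r=\frac{2N}{2N-\mu}$.

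For the non-sharp bound, I would reduce by duality to the assertion that the Riesz potential $I_\mu f(x):=\int_{\R^N}|x-y|^{-\mu}f(y)\,dy$ maps $L^t(\R^N)$ continuously into $L^{r'}(\R^N)$, where $r'$ is the H\"older conjugate of $r$. The key estimate is Hedberg's pointwise trick: for any $R>0$, split the kernel via $\mathbf{1}_{|x-y|<R}$ and $\mathbf{1}_{|x-y|\geq R}$. The inner piece is controlled by $CR^{N-\mu}Mf(x)$, where $M$ is the Hardy-Littlewood maximal operator, and the outer piece by $CR^{N/t-\mu}\|f\|_{L^t}$ via H\"older. Optimizing in $R$ yields the interpolation bound $I_\mu f(x)\leq C\,(Mf(x))^{\theta}\|f\|_{L^t}^{1-\theta}$ for the appropriate $\theta\in(0,1)$, and the $L^t$-boundedness of $M$ (for $t>1$) then finishes the job.

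For the sharp diagonal case, I would follow Lieb's argument. The Riesz rearrangement inequality first reduces the supremum to spherically symmetric decreasing functions. The decisive observation is then that, under stereographic projection $\R^N\to S^N$, the bilinear form $\iint f(x)h(y)|x-y|^{-\mu}\,dx\,dy$ becomes a functional on $S^N\times S^N$ (with chord-distance kernel) that is invariant under the compact group $O(N+1)$. This replaces the non-compact Euclidean scaling/translation symmetries by a compact symmetry group, so a concentration-compactness argument (or the Carlen-Loss competing-symmetries technique) produces a maximizer, which must be the push-forward of a constant under a conformal map. Pulling back gives exactly the family $h(x)=A(\gamma^2+|x-a|^2)^{-(2N-\mu)/2}$, and the constant $C(N,\mu)$ is computed by direct evaluation on this explicit family via a Beta-function identity.

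The main obstacle is the sharp case: establishing existence and classification of extremals. Rearrangement alone leaves the scaling and translation symmetries unbroken, which is a non-compact group action, so maximizing sequences can lose compactness by escape to infinity or by concentration. The trick of transferring the problem to $S^N$ replaces this non-compact action by a compact one; without that geometric input, one would be forced into a more delicate direct analysis of the critical Euler-Lagrange equation to rule out vanishing and dichotomy. The non-sharp portion is, by contrast, routine once Hedberg's trick is combined with Marcinkiewicz interpolation.
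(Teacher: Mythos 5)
The paper does not prove Proposition~2.1; it records the Hardy--Littlewood--Sobolev theorem as a black-box citation to Lieb and Loss (Theorem~4.3 of their book), so there is no internal argument to compare against. Your outline is nevertheless a correct and essentially standard route to the full statement, and it would succeed if carried out. Two small corrections in the Hedberg portion: by H\"older's inequality the outer piece is bounded by $CR^{N/t'-\mu}\|f\|_{L^t}$ with $t'$ the conjugate exponent of $t$ (you wrote $R^{N/t-\mu}$), and the convergence of that tail integral requires $\mu t'>N$, which under the scaling relation $\tfrac1t+\tfrac{\mu}{N}+\tfrac1r=2$ is precisely the hypothesis $r>1$; you should also record that the matching exponent in the optimized bound $I_\mu f\le C(Mf)^\theta\|f\|_{L^t}^{1-\theta}$ is $\theta=t/r'$, so that the $L^t$-boundedness of $M$ closes the estimate. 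It is worth noting that Lieb--Loss themselves do not obtain the non-sharp bound via Hedberg's maximal-function trick; their proof proceeds through a layer-cake decomposition of $f$, $h$, and the kernel, which buys a more self-contained argument at the cost of being less modular. Either route settles the non-sharp part. For the sharp diagonal case, your description of Lieb's argument --- Riesz rearrangement to pass to symmetric decreasing profiles, stereographic projection to $S^N$ so that the non-compact Euclidean dilations and translations become a compact conformal action of $O(N+1)$, existence of a maximizer by compactness or competing symmetries, identification of extremals with conformal images of constants, pull-back to the Euclidean family $A(\gamma^2+|x-a|^2)^{-(2N-\mu)/2}$, and evaluation of $C(N,\mu)$ by a Beta-function computation --- is exactly the proof recorded in Lieb--Loss and is correct in outline.
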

\begin{Remark}\label{remark2.2}
For $u\in{H^s(\R^N)}$, let $f=h=|u|^p$,  by Hardy-Littlewood-Sobolev inequality,
\begin{equation*}
\int_{\R^N}\int_{\R^N} \frac {|u(x)|^p|u(y)|^p}{|x-y|^{\mu}}dxdy
\end{equation*}
is well defined for all $p$ satisfying
\begin{equation*}
2_\mu:=\left(\frac{2N-\mu}{N}  \right)\leq p\leq\left( \frac{2N-\mu}{N-2s}\right):=2^*_\mu.
\end{equation*}
\end{Remark}
Next result is a basic inequality, which plays a great role in the latter proof.
\begin{proposition}(  \cite[Lemma 2.3]{MRDoubly}) \label{proposition2.3}
For $u,v\in L^{\frac{2N}{2N-\mu}}(\R^N)$, we have
\begin{equation*}
\int_{\R^N}\int_{\R^N}\frac{|u(x)|^p|v(y)|^p}{|x-y|^\mu}dxdy\leq\left(\int_{\R^N}\int_{\R^N}\frac{|u(x)|^p|u(y)|^p}{|x-y|^\mu}dxdy\right)^{\frac{1}{2}}\left(\int_{\R^N}\int_{\R^N}\frac{|v(x)|^p|v(y)|^p}{|x-y|^\mu}dxdy\right)^{\frac{1}{2}},
\end{equation*}
where $\mu\in(0,N)$ and $p\in[2_\mu,2_\mu^*]$.
\end{proposition}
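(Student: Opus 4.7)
The plan is to recognize the claimed inequality as a Cauchy–Schwarz inequality for the bilinear form associated to the Riesz kernel $|x-y|^{-\mu}$, applied to the nonnegative functions $f=|u|^p$ and $g=|v|^p$. Accordingly, the entire argument reduces to showing that
\[
B(f,g):=\int_{\R^N}\int_{\R^N}\frac{f(x)\,g(y)}{|x-y|^\mu}\,dx\,dy
\]
is a symmetric, positive semidefinite bilinear form on an appropriate subspace of $L^{2N/(2N-\mu)}(\R^N)$, and then invoking the standard Cauchy–Schwarz inequality for such forms.

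First I would verify that $B$ is well defined and finite on the class at hand. For $u,v\in L^{2N/(2N-\mu)}(\R^N)$ and $p\in[2_\mu,2_\mu^*]$, the exponent assumption in Remark \ref{remark2.2} ensures that $|u|^p,|v|^p$ lie in $L^{2N/(2N-\mu)}(\R^N)$, so Proposition \ref{Proposition 2.1} with $t=r=2N/(2N-\mu)$ guarantees that $B(|u|^p,|v|^p)$, $B(|u|^p,|u|^p)$ and $B(|v|^p,|v|^p)$ are all finite.

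Next I would establish positive semidefiniteness of $B$, i.e.\ $B(f,f)\geq0$ for every admissible real valued $f$. The clean way is via Fourier analysis: the Riesz potential identity
\[
\widehat{\,|x|^{-\mu}\,}(\xi)=c_{N,\mu}\,|\xi|^{\mu-N},\qquad c_{N,\mu}>0,\ 0<\mu<N,
\]
combined with Plancherel's theorem yields
\[
B(f,f)=c_{N,\mu}\int_{\R^N}|\xi|^{\mu-N}\,|\hat f(\xi)|^2\,d\xi\;\geq\;0,
\]
with symmetry $B(f,g)=B(g,f)$ being immediate from Fubini. Once $B$ is seen to be a (possibly degenerate) inner product, the standard argument for Cauchy–Schwarz (expanding $B(f+\lambda g,f+\lambda g)\geq0$ in $\lambda\in\R$ and optimizing) produces the pointwise bound
\[
|B(f,g)|\;\leq\;B(f,f)^{1/2}\,B(g,g)^{1/2}.
\]
Applying this with $f=|u|^p\geq0$ and $g=|v|^p\geq0$ (so that $B(f,g)$ is itself nonnegative and the absolute value is harmless) gives exactly the claimed inequality.

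The only potentially delicate point is the justification of the Fourier identity for $|x|^{-\mu}$, which must be interpreted in the tempered distributional sense since $|x|^{-\mu}$ is not integrable, together with the use of Plancherel on functions that are merely in $L^{2N/(2N-\mu)}$ rather than $L^2$. Both are standard and are usually handled by a density/regularization argument: prove the identity for $f\in\mathcal S(\R^N)$, then approximate a general admissible $f$ by Schwartz functions in $L^{2N/(2N-\mu)}$, using the continuity of $B$ on this space (which is again guaranteed by the Hardy–Littlewood–Sobolev inequality of Proposition \ref{Proposition 2.1}) to pass to the limit. An alternative, entirely real-variable route is to write $|x-y|^{-\mu}$ as the composition of two Riesz potentials of order $(N-\mu)/2$ and apply the usual $L^2$ Cauchy–Schwarz to the convolutions, which bypasses the Fourier transform entirely; either option completes the proof.
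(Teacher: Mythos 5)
Your proof is correct, and it uses the standard argument: recognize the double integral as a symmetric bilinear form with the Riesz kernel, establish positive semidefiniteness of that kernel (either via the Fourier identity $\widehat{|x|^{-\mu}}=c_{N,\mu}|\xi|^{\mu-N}$ with $c_{N,\mu}>0$, or equivalently via the semigroup property $|x|^{-\mu}=c\,|x|^{-(N+\mu)/2}\ast|x|^{-(N+\mu)/2}$ so that $B(f,g)=\langle I_{(N-\mu)/2}f, I_{(N-\mu)/2}g\rangle_{L^2}$), and then apply Cauchy--Schwarz. Note that the present paper does not prove Proposition~\ref{proposition2.3}; it merely cites \cite[Lemma 2.3]{MRDoubly}, and your argument is precisely the one used there. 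One small point of care: the hypothesis as written (``$u,v\in L^{\frac{2N}{2N-\mu}}$'' together with $p\in[2_\mu,2_\mu^*]$) does not by itself give $|u|^p,|v|^p\in L^{\frac{2N}{2N-\mu}}$, and Remark~\ref{remark2.2} concerns $u\in H^s(\R^N)$ rather than $u\in L^{\frac{2N}{2N-\mu}}$; what is actually needed, and what you correctly use in the argument, is that $|u|^p$ and $|v|^p$ lie in $L^{\frac{2N}{2N-\mu}}(\R^N)$ so that all three integrals are finite by Hardy--Littlewood--Sobolev. This is a wrinkle in the statement inherited from the reference, not a flaw in your reasoning.
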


\subsection{Abstract critical point theorems}
We will prove Theorem \ref{theorem1.3} and Theorem \ref{theorem1.4} by the following abstract critical point theorems  respectively.

\begin{theorem}(Mountain pass theorem, \cite[Theorem 2.10]{MRMinimaxTheorem})
Let $X$ be a Banach  space, $J\in C^1(X, \R)$, $e\in X$ and $r>0$ be such that $\|e\|>r$ and
\begin{equation*}
b:=\inf_{\|u\|=r}J(u)>J(0)\geq J(e).
\end{equation*}
If $J$ satisfies the $(PS)_c$ condition with
\begin{equation*}
\begin{split}
c:=\inf_{\gamma\in \Gamma}\max_{t\in [0,1]}J(\gamma(t)),
\end{split}
\end{equation*}
\begin{equation*}
\begin{split}
\Gamma:=\{\gamma \in C([0,1],X):\gamma(0)=0, \gamma(1)=e\}.
\end{split}
\end{equation*}
Then $c$ is a critical value of $J$.
\end{theorem}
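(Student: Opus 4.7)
The standard approach is proof by contradiction using the quantitative deformation lemma. First I would observe that $c$ is well defined and that $c \geq b$: for any $\gamma\in\Gamma$ the scalar function $t\mapsto \|\gamma(t)\|$ is continuous with $\|\gamma(0)\|=0$ and $\|\gamma(1)\|=\|e\|>r$, so by the intermediate value theorem $\gamma$ meets the sphere $\{\|u\|=r\}$ and therefore $\max_t J(\gamma(t)) \geq \inf_{\|u\|=r}J(u)=b$. Combined with the hypothesis $b > J(0) \geq J(e)$, this yields $c \geq b > \max\{J(0),J(e)\}$, which is the geometric ingredient that will pin the endpoints of any deformed path.

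Next I would assume toward contradiction that $c$ is not a critical value, so that $K_c:=\{u\in X: J(u)=c,\, J'(u)=0\}$ is empty. Then the $(PS)_c$ condition upgrades this to the quantitative statement that there exist $\bar\eps,\delta>0$ with $\|J'(u)\|\geq\delta$ whenever $|J(u)-c|\leq 2\bar\eps$; otherwise one could extract a sequence with $J(u_n)\to c$ and $J'(u_n)\to 0$, which by $(PS)_c$ would have a subsequence converging to a point of $K_c$.

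With this lower bound on $\|J'\|$ in hand, I would invoke the quantitative deformation lemma of Willem: for any $0<\eps<\min\{\bar\eps,\,(c-\max\{J(0),J(e)\})/2\}$ there exists a homeomorphism $\eta:X\to X$ such that $\eta$ equals the identity on the set $\{|J-c|\geq 2\bar\eps\}$, such that $J(\eta(u))\leq J(u)$ for every $u$, and such that $\eta(J^{c+\eps})\subset J^{c-\eps}$, where $J^a:=\{J\leq a\}$. Choose $\gamma\in\Gamma$ with $\max_t J(\gamma(t))\leq c+\eps$ from the definition of the infimum, and set $\tilde\gamma:=\eta\circ\gamma$. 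Because $J(0),J(e)\leq c-2\bar\eps$, the endpoints sit in the region where $\eta$ is the identity, so $\tilde\gamma(0)=0$, $\tilde\gamma(1)=e$, and $\tilde\gamma$ is continuous; hence $\tilde\gamma\in\Gamma$. But then $\max_t J(\tilde\gamma(t))\leq c-\eps<c$, contradicting the definition of $c$.

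The main obstacle is not the combinatorial argument above but the quantitative deformation lemma itself, whose nontrivial ingredient is the construction of a locally Lipschitz pseudo-gradient vector field aligned with $-J'(u)/\|J'(u)\|^2$ on the strip $\{|J-c|\leq 2\bar\eps\}$, cut off by a Urysohn function outside and integrated for time proportional to $\eps/\delta$; this is needed because $J'$ is only continuous, not in general locally Lipschitz. Granting the deformation lemma, the only subtleties are verifying the geometric lower bound $c\geq b$ via the intermediate value theorem, and choosing $\eps$ small enough that the endpoints $0$ and $e$ remain outside the deformation region so that $\eta\circ\gamma$ is still admissible.
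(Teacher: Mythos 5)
This statement is not proved in the paper; it is quoted verbatim from Willem's \emph{Minimax Theorems} (Theorem 2.10 there) and used as a black box. So there is no ``paper's own proof'' to compare against; the relevant comparison is with the proof in the cited reference, which your sketch faithfully reconstructs: establish $c\ge b>\max\{J(0),J(e)\}$ by the intermediate value theorem, use $(PS)_c$ to upgrade ``$K_c=\emptyset$'' to a uniform lower bound on $\|J'\|$ near level $c$, and then run the quantitative deformation lemma on a nearly optimal path to contradict the definition of $c$. This is exactly Willem's route, including the correct observation that the genuine technical content lives in the construction of the locally Lipschitz pseudo-gradient field underlying the deformation.

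One small parameter slip: you take $\eps<\min\{\bar\eps,\ (c-\max\{J(0),J(e)\})/2\}$ but then claim the endpoints lie in the region $\{|J-c|\ge 2\bar\eps\}$ where $\eta$ is the identity. The constraint on $\eps$ only gives $\max\{J(0),J(e)\}<c-2\eps$, and since $\eps<\bar\eps$ this does \emph{not} yield $\max\{J(0),J(e)\}\le c-2\bar\eps$. The fix is trivial: either first shrink $\bar\eps$ so that $2\bar\eps<c-\max\{J(0),J(e)\}$ (legitimate, since any smaller $\bar\eps$ still works with the same $\delta$), or quote Willem's Lemma 2.3 with its actual identity region $\{|J-c|\ge 2\eps\}$ (and $\|J'\|\ge 8\eps/\delta$ on the strip), in which case your bound on $\eps$ already suffices. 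Apart from this, the argument is sound.
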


\begin{theorem}(Linking theorem, \cite[Theorem 2.12]{MRMinimaxTheorem}) Let $X$ be a real Banach space with $X=V\oplus W$,
where $V$ is finite dimensional. Suppose $J\in C^1(X,\R)$ and
\begin{enumroman}
\item \label{lemma 5.2.a} There are constants $\rho,\alpha>0$ such that $J|_{\partial B_\rho\bigcap W}\geq
    \alpha$, and
\item \label{lemma 5.2.b} There is an $e\in \partial B_\rho\bigcap W $ and constants $R_1,R_2>\rho$ such that
    $J|_{\partial Q}\leq0$, where
\end{enumroman}
\begin{eqnarray*}
\begin{aligned}
Q=(\overline{B_{R_1}}\bigcap V)\oplus \{re,0<r<R_2\}.
\end{aligned}
\end{eqnarray*}
Then $J$ possesses a $(PS)_c$  sequence where $c \geq\alpha$ can be characterized as
\begin{eqnarray*}
\begin{aligned}
c=\inf_{h\in\Gamma}\max_{u\in Q}J(h(u)),
\end{aligned}
\end{eqnarray*}
where
\begin{eqnarray*}
\begin{aligned}
\Gamma=\{h\in C(\overline{Q},X):h=id \text{ on } \partial Q\}.
\end{aligned}
\end{eqnarray*}
\end{theorem}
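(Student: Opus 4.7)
The classical strategy splits into two independent ingredients: a \emph{topological linking} inequality that forces $c \geq \alpha$, followed by a \emph{deformation lemma} argument extracting the $(PS)_c$ sequence. I would organize the work accordingly.

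For the linking step the aim is to show that $h(\overline{Q}) \cap (\partial B_\rho \cap W) \neq \emptyset$ for every $h \in \Gamma$. Let $P_V$ and $P_W$ denote the continuous projections associated with the decomposition $X = V \oplus W$, and define the auxiliary map $\Phi : \overline{Q} \to V \oplus \mathrm{span}\{e\}$ by $\Phi(u) := P_V h(u) + (\|P_W h(u)\|/\|e\|)\, e$. On $\partial Q$ one has $h = \mathrm{id}$, so for $u = v + re \in \partial Q$ (with $v \in V$ and $0 \leq r \leq R_2$) the formula gives $\Phi(u) = v + re$; hence $\Phi|_{\partial Q}$ coincides with the identity and, since $\rho < \min\{R_1, R_2\}$, the value $\rho e$ is never attained on $\partial Q$. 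Brouwer degree theory on the finite-dimensional cylinder $V \oplus \mathrm{span}\{e\}$ then yields $\deg(\Phi, \mathrm{int}(Q), \rho e) = 1$, which produces a point $u_0 \in \mathrm{int}(Q)$ with $P_V h(u_0) = 0$ and $\|P_W h(u_0)\| = \rho$, i.e., $h(u_0) \in \partial B_\rho \cap W$. Combined with hypothesis $(i)$ this gives $\max_{\overline{Q}} J \circ h \geq \alpha$, and taking the infimum over $\Gamma$ yields $c \geq \alpha$.

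For the $(PS)_c$-extraction I would argue by contradiction using the quantitative deformation lemma. If no $(PS)_c$ sequence existed, there would be $\epsilon \in (0, \alpha/4)$ and a continuous deformation $\eta : X \to X$ such that $\eta(J^{c+\epsilon}) \subset J^{c-\epsilon}$ and $\eta$ is the identity outside the sublevel set $J^{\alpha/2}$. Picking $h_0 \in \Gamma$ with $\max_{\overline{Q}} J \circ h_0 \leq c + \epsilon$, hypothesis $(ii)$ ensures $J|_{\partial Q} \leq 0 < \alpha/2$, so the composition $\eta \circ h_0$ still equals the identity on $\partial Q$ and hence belongs to $\Gamma$; but then $\max_{\overline{Q}} J(\eta \circ h_0) \leq c - \epsilon$, contradicting the definition of $c$. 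The main obstacle is the topological linking claim, in particular verifying that $\Phi$ avoids $\rho e$ on each face of $\partial Q$: the two $V$-faces use $R_1 > \rho$, the top face uses $R_2 > \rho$, and the bottom face $\{r=0\}$ forces $\Phi(u) = v \in V$, which cannot equal $\rho e$ because $e \in W$. Once this linking is in place, the deformation argument and the $c \geq \alpha$ bound are routine.
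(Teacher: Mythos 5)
The paper does not actually prove this theorem: it is cited verbatim from Willem's \emph{Minimax Theorems} (Theorem 2.12), so there is no internal proof to compare your attempt against. What you have written is the standard Rabinowitz--Benci argument (Brouwer degree for the topological linking, quantitative deformation lemma for the $(PS)_c$-sequence extraction), and in outline it is correct.

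Two small points are worth cleaning up. First, there is a normalization wrinkle that you have inherited from the paper's slightly garbled statement. As written here, $e\in\partial B_\rho\cap W$, so $\|e\|=\rho$; with $\Phi(u)=P_Vh(u)+(\|P_Wh(u)\|/\|e\|)e$, the correct target for the degree computation is then $e$ itself (giving $\|P_Wh(u_0)\|=\|e\|=\rho$), not $\rho e$ (which would give $\|P_Wh(u_0)\|=\rho\|e\|=\rho^2$). Your choice of $\rho e$ is the right target under Rabinowitz's convention $\|e\|=1$ --- which is almost certainly what the authors intended, since with $\|e\|=\rho$ the hypothesis $R_2>\rho$ does not even guarantee that the cylinder $\{re:0<r<R_2\}$ reaches past the sphere $\partial B_\rho$. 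Either pick one convention and keep it consistent, or parametrize via $\hat e:=e/\|e\|$ and target $\rho\hat e$. Second, your phrase ``$\eta$ is the identity outside the sublevel set $J^{\alpha/2}$'' is backwards: the quantitative deformation lemma gives $\eta(1,\cdot)=\mathrm{id}$ outside the strip $J^{-1}([c-2\epsilon,c+2\epsilon])$, and since $\epsilon<\alpha/4\le c/4$ this strip lies strictly above level $\alpha/2>0\ge J|_{\partial Q}$, so $\eta(1,\cdot)$ fixes $\partial Q$. With those two cosmetic fixes the argument is complete, and the rest (the four faces of $\partial Q$, the $c\ge\alpha$ bound, and the contradiction $\max_{\overline Q}J(\eta(1,h_0(\cdot)))\le c-\epsilon$) is exactly right.
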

\begin{Remark}
Here $\partial Q$ is  the boundary of $Q$  relative to the space $V\oplus \text{span}\{e\}$, and when $V=\{0\}$,
this theorem refers to the usual mountain pass Theorem. We recall that if $J|_{V}\leq 0$ and $J(u)\leq 0, \forall
u\in V\oplus \text{span}\{e\} $ with $\|u\|\geq R$,  then $J$ verifies $(ii)$ for $R$ large enough.
Fixed $k \in \N$, define the following subspaces
\begin{eqnarray*}
\begin{aligned}
V=\text{ span
}\{(0,\varphi_{1,s}),(\varphi_{1,s},0),(0,\varphi_{2,s}),(\varphi_{2,s},0),\ldots,(0,\varphi_{k-1,s}),(\varphi_{k-1,s},0)\}
\end{aligned}
\end{eqnarray*}
and
\begin{eqnarray*}
\begin{aligned}
W=V^\bot=(\mathbb{P}_k)^2.
\end{aligned}
\end{eqnarray*}
\end{Remark}
\section{$Case~1:\xi_1=\xi_2=0$, $1<p,q<2^*_\mu$}

\noindent\textbf{Proof of Theorem 1.2}
Let $\Omega$ be a bounded domain and suppose that
\begin{equation}
\begin{split}\label{3.1}
b\geq 0,
\end{split}
\end{equation}
\begin{equation}
\begin{split}\label{3.2}
\mu_2 < \lambda_{1,s}.
\end{split}
\end{equation}
Consider the function $I:Y(\Omega)\rightarrow\R $ defined by
\begin{equation}
\begin{split}\label{3.3}
I(U):=\frac{1}{2}\|U\|_Y^2-\frac{1}{2}\int_\Omega(AU,U)_{\R^2}dx.
\end{split}
\end{equation}
We shall minimize the functional $I$ restricted to the set
\begin{equation*}
M:=\{U=(u,v)\in Y(\Omega):\int_\Omega\int_\Omega\frac{|u^+(x)|^p|v^+(y)|^q}{|x-y|^\mu}dxdy=1 \}.
\end{equation*}
By virtue of \eqref{3.2} the embedding $X(\Omega)\hookrightarrow L^2(\Omega)$(with the sharp constant
$\lambda_{1,s}$), we have
\begin{equation}
\begin{split}\label{3.4}
I(U)\geq\frac{1}{2}\text{min}\left\{1,(1-\frac{\mu_2}{\lambda_{1,s}})\right\}\|U\|_Y^2\geq 0.
\end{split}
\end{equation}
Define
\begin{equation}
\begin{split}\label{3.5}
I_0:=\inf_{\M}\ I,
\end{split}
\end{equation}
and let $(U_n)=(u_n,v_n)\subset \M $ be a minimizing sequence for $I_0$. Then $I(U_n)=I_0+o_n(1)\leq C$, for
some $C>0$ (where $o_n(1)\rightarrow 0$, as $n\rightarrow\infty$) and consequently by \eqref{3.4}, we get
\begin{equation}
\begin{split}\label{3.6}
[u_n]_s^2+[v_n]_s^2=\|u_n\|^2_X+\|v_n\|^2_X=\|U_n\|^2_Y\leq C'.
\end{split}
\end{equation}
Hence, there are two subsequences of $\{u_n\}\subset X(\Omega)$ and $\{v_n\}\subset X(\Omega)$ (that we will
still label as $u_n$ and $v_n$) such that $U_n=(u_n,v_n)$ converges to some $U=(u,v)$ in $Y(\Omega)$ weakly and
\begin{equation}
\begin{split}\label{3.7}
[u]_s^2\leq\liminf_{n}\ \int_{\R^{2N}}\frac{|u_n(x)-u_n(y)|^2}{|x-y|^{N+2s}}dxdy,
\end{split}
\end{equation}
\begin{equation}
\begin{split}\label{3.8}
[v]_s^2\leq\liminf_{n}\ \int_{\R^{2N}}\frac{|v_n(x)-v_n(y)|^2}{|x-y|^{N+2s}}dxdy,
\end{split}
\end{equation}
Now we will show that $U:=(u,v)\in \M $. Indeed, since $(U_n)\subset \M$, we have
\begin{equation}
\begin{split}\label{3.9}
\int_\Omega\int_\Omega\frac{|u^+_n(x)|^p|v^+_n(y)|^q}{|x-y|^\mu}dxdy=1.
\end{split}
\end{equation}
In view of the compact embedding $X(\Omega)\hookrightarrow L^r(\Omega)$ for all
$r<2^*_s=\frac{2N}{N-2s}$, as $1<p,q<2^*_\mu$, we get
\begin{equation}
\begin{split}\label{3.10}
\int_\Omega\int_\Omega\frac{|u_n^+(x)|^p|v_n^+(y)|^q}{|x-y|^\mu}dxdy\rightarrow\int_\Omega\int_\Omega\frac{|u^+(x)|^p|v^+(y)|^q}{|x-y|^\mu}dxdy,
\text{    as }n\rightarrow\infty,
\end{split}
\end{equation}
thus $\int_\Omega\int_\Omega\frac{|u^+(x)|^p|v^+(y)|^q}{|x-y|^\mu}dxdy=1$ and consequently $ U:=(u,v)\in \M $ with $u,v\neq0$. We now
show that $U=(u,v)$ is a minimizer for $I$ on $\M$ and both  components $u,v$ are nonnegative. By passing to
the
limit in $I(U_n)=I_0+o_n(1)$, where  $o_n(1)\rightarrow0$ as $n\rightarrow\infty$, using \eqref{3.7},
\eqref{3.8} and the strong convergence of $(u_n,v_n)$ to $(u,v)$ in $(L^2(\Omega))^2$, as $n\rightarrow\infty$,
we conclude that $I(U)\leq I_0$.
Moreover, since $U\in \M $ and $I_0=\inf_{\M}\ I\leq I(U)$, we achieve that $I(U)=I_0$. This proves the
minimality of $U\in \M $. On the other hand, let
\[
G(U)=\int_\Omega\int_\Omega\frac{|u^+(x)|^p|v^+(y)|^q}{|x-y|^\mu}dxdy-1,
\]
where $U(u,v)\in Y(\Omega)$. Note that $G\in C^1 $ and since $U\in \M$,
\[
G'(U)U=(p+q)\int_\Omega\int_\Omega\frac{|u^+(x)|^p|v^+(y)|^q}{|x-y|^\mu}dxdy=p+q\neq 0,
\]
hence, by Lagrange Multiplier Theorem, there exists a multiplier $\zeta\in \R $ such that
\begin{equation}
\begin{split}\label{3.11}
I'(U)(\varphi,\psi)=\zeta G'(U)(\varphi,\psi),\forall (\varphi,\psi)\in Y(\Omega).
\end{split}
\end{equation}
Taking $(\varphi,\psi)=(u^-,v^-):=U^-$in \eqref{3.11}, we get
\begin{equation*}
\begin{split} \|U^-\|^2_Y=&\int_{\R^{2N}}\frac{u^+(x)u^-(y)+u^-(x)u^+(y)}{|x-y|^{N+2s}}dxdy\\[10pt]
& +\int_{\R^{2N}}\frac{v^+(x)v^-(y)+v^-(x)v^+(y)}{|x-y|^{N+2s}}dxdy\\[10pt]
& +\int_{\Omega}(AU,U^-)_{\R^{2}}dx.\\[10pt]
\end{split}
\end{equation*}
Dropping this formula into the expression of $I(U^-)$, we have
\begin{equation}
\begin{split}\label{3.12}
I(U^-)=&\frac{b}{2}\int_{\Omega}(v^+u^-+u^+v^-)dx+\frac{1}{2}\int_{\R^{2N}}\frac{u^+(x)u^-(y)+u^-(x)u^+(y)}{|x-y|^{N+2s}}dxdy\\
&+\frac{1}{2}\int_{\R^{2N}}\frac{v^+(x)v^-(y)+v^-(x)v^+(y)}{|x-y|^{N+2s}}dxdy\leq0,\\
\end{split}
\end{equation}
since $b\geq0,u^-\leq0$ and $u^+\geq0$. On the other hand,
\begin{equation*}
\begin{split}
I(U^-)\geq \frac{1}{2}\text{min}\left\{1,(1-\frac{\mu_2}{\lambda_{1,s}})\right\}\|U^-\|^2_Y\geq 0,
\end{split}
\end{equation*}
we get $U^-=(u^-,v^-)=(0,0)$ and therefore $u,v\geq 0$. We now prove the
existence of a positive solution to \eqref{1.1}. Using again \eqref{3.11}, we see that
\[
\|U\|^2_Y-\int_{\Omega}(AU,U)_{\R^2}dx-\zeta(p+q)=0
\]
and since $U\in \M$, we conclude that
\[
I_0=I(U)=\frac{\zeta(p+q)}{2}>0,
\]
 Then by \eqref{3.11}, $U$ satisfies the following system, weakly,
\begin{equation*}
\left\{\begin{aligned}
&(-\bigtriangleup)^s u
=au+bv+\frac{2pI_0}{p+q}\int_{\Omega}\int_{\Omega}\frac{|u|^{p-1}|v|^q}{|x-y|^{\mu}}dxdy,&&\text{ in }
\Omega;\\
&(-\bigtriangleup)^s v
=bu+cv+\frac{2qI_0}{p+q}\int_{\Omega}\int_{\Omega}\frac{|u|^p|v|^{q-1}}{|x-y|^{\mu}}dxdy,&&\text{ in }
\Omega;\\
&u=v=0, &&\text{ in } \R^N\backslash \Omega. \\
\end{aligned}\right.
\end{equation*}
Now using the homogeneity of system, we get $\tau > 0$ such that $W=(I_0)^\tau U$ is a solution of \eqref{1.1}.
Since $b\geq0$ and $u,v\geq0$ we get, in weak sense
\begin{equation*}
\left\{\begin{aligned}
 &(-\bigtriangleup)^s u \geq au,&&\text{ in } \Omega;\\
& (-\bigtriangleup)^s v \geq cv,&&\text{ in } \Omega;\\
&u\geq 0,v\geq0 &&\text{ in } \Omega;\\
&u=v=0, &&\text{ in } \R^N\backslash \Omega. \\
\end{aligned}\right.
\end{equation*}
By the strong maximum principle(cf. \cite{MR9}, Theorem 2.5), we conclude $u,v>0$ in $\Omega$.

\section{$Case~2:\xi_1=\xi_2=0, p=q=2^*_\mu.$}
In this case,  we have the  function $J_s:Y(\Omega)\rightarrow \R$ by setting
\begin{eqnarray*}
\begin{aligned}
J_s(U)\equiv J_s(u,v) = &\frac{1}{2}\int_{\R^{2N}}\frac{|u(x)-u(y)|^2+|v(x)-v(y)|^2}{|x-y|^{N+2s}}dxdy \\
&-\frac{1}{2}\int_{\R^N}(A(u,v),(u,v))_{\R^2}dx-\frac{1}{2^*_\mu}\int_{\Omega}\int_{\Omega}\frac{|u^+(x)|^{2^*_\mu}|v^+(y)|^{2^*_\mu}}{|x-y|^{\mu}}dxdy,
\end{aligned}
\end{eqnarray*}
whose Fr\'{e}chet derivative is given by
\begin{eqnarray}\label{4.13}
\begin{aligned}
J'_s(u,v)(\varphi,\psi)=&\int_{\R^{2N}}\frac{(u(x)-u(y))(\varphi(x)-\varphi(y))+(v(x)-v(y))(\psi(x)-\psi(y))}{|x-y|^{N+2s}}dxdy\\
&-\int_{\Omega}(A(u,v),(\varphi,\psi))_{\R^2}dx-\int_{\Omega}\int_{\Omega}\frac{|u^+(x)|^{2^*_\mu-1}|v^+(y)|^{2^*_\mu}}{|x-y|^\mu}\varphi dxdy
\\&-\int_{\Omega}\int_{\Omega}\frac{|u^+(x)|^{2^*_\mu}|v^+(y)|^{2^*_\mu-1}}{|x-y|^\mu}\psi dxdy,\\
\end{aligned}
\end{eqnarray}
for every $(\varphi,\psi)\in Y(\Omega)$.
\subsection{Minimizers and some estimates}
Let
\begin{equation} \label{4.1}
S_s:=\inf_{u\in X(\Omega)\backslash\{0\}} S_s(u),
\end{equation}
where
\begin{equation} \label{4.2}
S_s(u):=\frac{\int_{\R^{2N}}\frac{|u(x)-u(y)|^2}{|x-y|^{N+2s}}dxdy}{(\int_{\R^N}|u(x)|^{2^*_s}dx)^\frac{2}{2^*_s}}
\end{equation}
is the associated Rayleigh quotient. Define the following related minimizing problems as
\begin{equation} \label{4.3}
S_s^H=\inf_{u\in
X(\Omega)\backslash\{0\}}\frac{\int_{\R^{2N}}\frac{|u(x)-u(y)|^2}{|x-y|^{N+2s}}dxdy}{(\int_{\Omega}\int_{\Omega}\frac{|u(x)|^{2^*_\mu}|u(y)|^{2^*_\mu}}{|x-y|^\mu}dxdy)^{\frac{1}{2^*_\mu}}}
\end{equation}
and
\begin{equation} \label{4.4}
\widetilde{S}_s^H=\inf_{(u,v)\in
Y(\Omega)\backslash\{(0,0)\}}\frac{\int_{\R^{2N}}\frac{|u(x)-u(y)|^2+|v(x)-v(y)|^2}{|x-y|^{N+2s}}dxdy}{(\int_{\Omega}\int_{\Omega}\frac{|u(x)|^{2^*_\mu}|v(y)|^{2^*_\mu}}{|x-y|^\mu}dxdy)^{\frac{1}{2^*_\mu}}}.
\end{equation}

\begin{proposition}\label{proposition4.1}\
\begin{enumroman}
\item \label{Proposition 4.1.1}(\cite[Lemma 2.15]{MRDAvenia})
The constant $S_s^H $ is achieved by $u$ if and only if $u$ is of the form
\begin{equation*}
C(\frac{t}{t^2+|x-x_0|^2})^{\frac{N-2s}{2}},x\in \R^N,
\end{equation*}
for some $x_0\in \R^N,C>0$ and  $t>0$. Also it satisfies
\begin{equation}\label{4.5}
(-\Delta)^s u=\left(\int_{\R^N}\frac{|u|^{2^*_\mu}}{|x-y|^\mu}dy\right)|u|^{2^*_\mu-2}u \text{  in  } \R^N.
\end{equation}
and  this characterization of $u$ also provides the minimizers for $S_s$.
\item \label{Proposition 4.1.2}(\cite[Lemma 2.5]{MRDoubly})
 $S_s^H=\frac{S_s}{C(N,\mu)^{\frac{1}{2^*_\mu}}}$.
\item \label{Proposition 4.1.3}(\cite[Lemma 2.6]{MRDoubly})
 $\widetilde{S}_s^H=2S_s^H$.
\end{enumroman}
\end{proposition}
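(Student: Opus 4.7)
The plan is to handle the three parts of Proposition \ref{proposition4.1} in order, with part (ii) serving as the technical backbone that feeds into both (i) and (iii).

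For part (ii), I would start from the Hardy–Littlewood–Sobolev inequality (Proposition \ref{Proposition 2.1}) applied with $t=r=\frac{2N}{2N-\mu}$ and $f=h=|u|^{2^*_\mu}$. The exponent check $p\cdot \frac{2N}{2N-\mu}=2^*_s$ shows that $f\in L^t(\R^N)$ is equivalent to $u\in L^{2^*_s}(\R^N)$, and HLS then gives
\[
\int_{\R^N}\!\int_{\R^N}\frac{|u(x)|^{2^*_\mu}|u(y)|^{2^*_\mu}}{|x-y|^\mu}\,dxdy \le C(N,\mu)\,\|u\|_{L^{2^*_s}}^{2\cdot 2^*_\mu}.
\]
Raising to the power $1/2^*_\mu$ reduces the exponent of $\|u\|_{L^{2^*_s}}$ to $2$, so dividing by $[u]_s^2$ and taking the infimum yields $S_s^H\ge S_s/C(N,\mu)^{1/2^*_\mu}$. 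Equality in HLS in Proposition \ref{Proposition 2.1} is automatic when $f=h$, so the inequality reverses on Aubin–Talenti bubbles that extremize $S_s$, which gives the opposite inequality and hence equality.

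For part (i), the same chain of inequalities shows that any minimizer $u$ of $S_s^H$ forces both (a) equality in HLS, so that $|u|^{2^*_\mu}$ is of the prescribed form $A(\gamma^2+|x-a|^2)^{-(2N-\mu)/2}$, and (b) equality in the fractional Sobolev inequality, which is known to be characterized by the Aubin–Talenti bubbles. These two pieces of information are compatible only when $u$ itself is of the form $C\left(\frac{t}{t^2+|x-x_0|^2}\right)^{(N-2s)/2}$, and one checks by a direct computation of $[u]_s^{2}$ and the double integral that such $u$ also extremize $S_s$. Finally, writing down the Euler–Lagrange equation for the constrained minimization (differentiating the numerator and denominator of (4.3) and absorbing the Lagrange multiplier by a scaling $u\mapsto\lambda u$) gives exactly equation \eqref{4.5}; the main obstacle in this part is citing cleanly the classification of fractional Sobolev extremals, which is available in \cite{MRDAvenia}.

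For part (iii), the approach is to apply Proposition \ref{proposition2.3} to split the mixed double integral:
\[
\int_\Omega\!\int_\Omega\frac{|u(x)|^{2^*_\mu}|v(y)|^{2^*_\mu}}{|x-y|^\mu}\,dxdy \le \left(\int_\Omega\!\int_\Omega\frac{|u(x)|^{2^*_\mu}|u(y)|^{2^*_\mu}}{|x-y|^\mu}\,dxdy\right)^{\!\!1/2}\!\left(\int_\Omega\!\int_\Omega\frac{|v(x)|^{2^*_\mu}|v(y)|^{2^*_\mu}}{|x-y|^\mu}\,dxdy\right)^{\!\!1/2}\!.
\]
Taking the $1/2^*_\mu$-th root and applying the definition of $S_s^H$ to each factor yields the bound $[u]_s[v]_s/S_s^H$ on the right. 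Then the elementary inequality $[u]_s^2+[v]_s^2\ge 2[u]_s[v]_s$ gives $\widetilde{S}_s^H\ge 2S_s^H$. Equality is achieved by choosing $u=v$ equal to any minimizer of $S_s^H$ from part (i), which simultaneously saturates Proposition \ref{proposition2.3} (since $u=v$) and the AM–GM step. I expect the genuinely hard work to be buried in the classification step of part (i); parts (ii) and (iii) are then essentially bookkeeping once HLS and Proposition \ref{proposition2.3} are in hand.
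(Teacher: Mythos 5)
The paper does not prove Proposition~\ref{proposition4.1}; it simply cites \cite[Lemma~2.15]{MRDAvenia} for (i) and \cite[Lemmas~2.5, 2.6]{MRDoubly} for (ii) and (iii). Your reconstruction of the proofs matches the standard arguments in those references: for (ii), apply the Hardy--Littlewood--Sobolev inequality with $t=r=\frac{2N}{2N-\mu}$ and $f=h=|u|^{2^*_\mu}$ and take the $1/2^*_\mu$-th root to relate the two Rayleigh quotients; for (iii), use Proposition~\ref{proposition2.3} plus AM--GM, with the upper bound coming from the diagonal choice $u=v$; for (i), match the equality cases of HLS and the fractional Sobolev inequality and defer the classification to the reference. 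So the approach is the expected one.

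One sentence in your argument for (ii) is wrong as written: equality in Proposition~\ref{Proposition 2.1} is \emph{not} ``automatic when $f=h$.'' Proposition~\ref{Proposition 2.1} requires \emph{both} $f\equiv(\text{const})h$ \emph{and} $h(x)=A(\gamma^2+|x-a|^2)^{-(2N-\mu)/2}$. Your conclusion survives only because the Aubin--Talenti bubbles $u(x)=c(\gamma^2+|x-a|^2)^{-(N-2s)/2}$ do satisfy the second condition after raising to the power $2^*_\mu$ (indeed $|u|^{2^*_\mu}=c^{2^*_\mu}(\gamma^2+|x-a|^2)^{-(2N-\mu)/2}$), so for \emph{these particular} functions HLS is saturated. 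You should state it this way rather than asserting equality holds whenever $f=h$. You also implicitly identify the bounded-domain infima in \eqref{4.1}, \eqref{4.3}, \eqref{4.4} with their whole-space counterparts; this is standard (the constants agree, though the bounded-domain infima are not attained, which is why the paper's phrasing ``achieved by $u$'' for a function supported on all of $\R^N$ is itself a mild abuse), but it is worth being explicit, since your test functions in the upper-bound directions of (ii) and (iii) do not lie in $X(\Omega)$ and must be replaced by the truncated family $u_\epsilon$ and a limiting argument.
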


Now  we will construct auxiliary functions and make some estimates with the help of Proposition \ref{proposition4.1}.  From \cite{MRBrezisNirenberg},
consider the family of function $\{U_\epsilon\}$ defined as
\begin{equation*}
 U_\epsilon(x)=\epsilon^{-\frac{(N-2s)}{2}}u^*(\frac{x}{\epsilon}),x\in \R^N,
\end{equation*}
where $u^{*}(x)=\overline{u}\left(\frac{x}{S_s^{\frac{1}{2s}}}\right)$, $\overline{u}(x)=\frac{\widetilde{u}(x)}{\|\widetilde{u}\|_{L^{2^*_s}}}$ and $\widetilde{u}=\alpha(\beta^2+|x|^2)^{-\frac{N-2s}{2}}$ with $\alpha \in \R\backslash \{0\}$ and $\beta>0$ are fixed constants. Then for each $\epsilon >0$, $U_\epsilon$ satisfies
\begin{equation*}
(-\Delta)^su=|u|^{2^*_s-2}u  ,\text{ in  } \R^N,
\end{equation*}
in addition,
\begin{equation*}
\int_{\R^N}\int_{\R^N}\frac{|U_\epsilon(x)-U_\epsilon(y)|^2}{|x-y|^{N+2s}}dxdy=\int_{\R^N}|U_\epsilon|^{2^*_s}dx=S_s^{\frac{N}{2s}}.
\end{equation*}

Without loss of generality, we assume $0\in{\Omega}$ and fix $\delta > 0$ such that $B_{4\delta}\subset \Omega
$. Let $\eta \in C^\infty (\R^N)$ be such that $0\leq\eta\leq 1$ in $\R^N$, $\eta\equiv1$ in $B_\delta$ and
$\eta\equiv 0$ in $\R^N\backslash B_{2\delta}$. For $\epsilon>0$, we denote by $u_\epsilon$ the following
function
\begin{equation*}
u_\epsilon(x)=\eta(x)U_\epsilon(x),
\end{equation*}
for $x\in \R^N$. We have the following results for $u_\epsilon$ in \cite[Propositions 21, Propositions 22]{MRBrezisNirenberg} and \cite[Proposition 7.2]{MRTheYamabe}.
\begin{proposition}\label{Proposition4.2}
Let $s\in(0,1)$ and $N>2s$. Then, the following estimates hold true as $\epsilon\rightarrow0$:
\begin{enumroman}
\item \label{Proposition 4.3.a}
    $\int_{\R^{2N}}\frac{|u_\epsilon(x)-u_\epsilon(y)|^2}{|x-y|^{N+2s}}dxdy\leq
    S^{\frac{N}{2s}}_s+O(\epsilon^{N-2s})$,
\item \label{Proposition 4.3.b}$\int_{\R^{N}}|u_\epsilon|^{2^*_s}dx =S^{\frac{N}{2s}}_s+O(\epsilon^N)$,
\item \label{Proposition 4.3.c}$\int_{\R^{N}}|u_\epsilon|^2dx\geq
\begin{cases}
C_s\epsilon^{2s}+O(\epsilon^{N-2s}),& if~N>4s;\\
C_s\epsilon^{2s}|log\epsilon|+O(\epsilon^{2s}), & if~N=4s;\\
C_s\epsilon^{N-2s}+O(\epsilon^{2s}), & if~2s<N<4s;\\
\end{cases}$\\
for some positive constant $C_s$ depending on $s$.
\item \label{Proposition 4.3.d}$\int_{\R^{N}}|u_\epsilon|dx=O(\epsilon^{\frac{N-2s}{2}})$.
\end{enumroman}
\end{proposition}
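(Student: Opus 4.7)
The whole proposition rests on the explicit form of $U_\epsilon$ and the decay $u^*(y)\sim C|y|^{-(N-2s)}$ as $|y|\to\infty$, inherited from $\widetilde{u}$. Thanks to the rescaling $U_\epsilon(x)=\epsilon^{-(N-2s)/2}u^*(x/\epsilon)$ and the cut-off $\eta\equiv 1$ on $B_\delta$, $\eta\equiv 0$ outside $B_{2\delta}$, all four integrals reduce, after the change of variable $y=x/\epsilon$, to controlling fixed integrals of $u^*$ over balls of radius $\asymp \delta/\epsilon$.

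For (ii) I would decompose
\[
\int_{\R^N}|u_\epsilon|^{2^*_s}\,dx = \int_{\R^N}|U_\epsilon|^{2^*_s}\,dx - \int_{\R^N\setminus B_\delta}(1-\eta^{2^*_s})|U_\epsilon|^{2^*_s}\,dx.
\]
The first term equals $S_s^{N/(2s)}$ by the normalization recalled just before the statement, while on $|x|\geq\delta$ the pointwise bound $|U_\epsilon(x)|^{2^*_s}\leq C\epsilon^N(\epsilon^2+|x|^2)^{-N}\leq C'\epsilon^N|x|^{-2N}$ shows that the second term is $O(\epsilon^N)$. Estimate (iv) is analogous: since $\int|u_\epsilon|\,dx\leq\int_{B_{2\delta}}U_\epsilon\,dx$, the change of variable $y=x/\epsilon$ together with $u^*(y)\sim |y|^{-(N-2s)}$ gives $\int_{B_{2\delta/\epsilon}} u^*(y)\,dy=O(\epsilon^{-2s})$ and hence a total factor $\epsilon^{-(N-2s)/2}\cdot\epsilon^N\cdot\epsilon^{-2s}=\epsilon^{(N-2s)/2}$.

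For (iii) the same change of variable produces
\[
\int_{\R^N}|u_\epsilon|^2\,dx \geq \int_{B_\delta}|U_\epsilon|^2\,dx = \epsilon^{2s}\int_{B_{\delta/\epsilon}}|u^*(y)|^2\,dy,
\]
and the three cases fall out from the behaviour $|u^*(y)|^2\sim C|y|^{-2(N-2s)}$ at infinity: passing to polar coordinates yields an integrand $r^{4s-N-1}$, so that the last integral stays bounded when $N>4s$, diverges logarithmically (of order $|\log\epsilon|$) when $N=4s$, and behaves like $(\delta/\epsilon)^{4s-N}$ when $2s<N<4s$. Multiplying by $\epsilon^{2s}$ gives the three claimed lower bounds, the additive errors coming from the sub-leading contributions of the asymptotic expansion of $u^*$.

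Estimate (i) is the main obstacle because the Gagliardo seminorm is nonlocal. I would write $u_\epsilon=U_\epsilon-(1-\eta)U_\epsilon$ and apply Minkowski's inequality in the form
\[
[u_\epsilon]_s \leq [U_\epsilon]_s + [(1-\eta)U_\epsilon]_s,
\]
so that $[u_\epsilon]_s^2\leq [U_\epsilon]_s^2 + 2[U_\epsilon]_s[(1-\eta)U_\epsilon]_s + [(1-\eta)U_\epsilon]_s^2$. Since $[U_\epsilon]_s^2=S_s^{N/(2s)}$ is a fixed constant, the whole estimate reduces to proving $[(1-\eta)U_\epsilon]_s^2=O(\epsilon^{N-2s})$. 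The function $(1-\eta)U_\epsilon$ vanishes on $B_\delta$ and coincides with $U_\epsilon$ outside $B_{2\delta}$, so I would split the double integral over $\R^{2N}$ into the regions $\{x,y\in B_\delta\}$ (where the integrand vanishes), $\{x\in B_\delta,\ y\notin B_\delta\}$ together with its symmetric counterpart, and $\{x,y\notin B_\delta\}$. On each piece the pointwise bound $U_\epsilon(x)\leq C\epsilon^{(N-2s)/2}|x|^{-(N-2s)}$ for $|x|\geq\delta$, combined with the Lipschitz control $|\eta(x)-\eta(y)|\leq C|x-y|$, yields integrable kernels; a careful but routine book-keeping—identical in spirit to the computations in the cited works of Servadei--Valdinoci and Cabr\'e--Sire—produces the desired $O(\epsilon^{N-2s})$ bound and closes the proof of (i).
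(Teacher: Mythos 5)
Your treatment of parts (ii), (iii), and (iv) is sound and matches the standard rescaling/tail computations in the references the paper cites (Servadei--Valdinoci and Servadei); the paper itself does not reproduce a proof, so this is the right kind of argument to supply.

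Part (i), however, has a genuine gap. After applying Minkowski you write
\[
[u_\epsilon]_s^2\leq [U_\epsilon]_s^2 + 2[U_\epsilon]_s\,[(1-\eta)U_\epsilon]_s + [(1-\eta)U_\epsilon]_s^2
\]
and then claim that everything reduces to showing $[(1-\eta)U_\epsilon]_s^2=O(\epsilon^{N-2s})$. But if $[(1-\eta)U_\epsilon]_s^2=O(\epsilon^{N-2s})$, then $[(1-\eta)U_\epsilon]_s=O(\epsilon^{(N-2s)/2})$, and since $[U_\epsilon]_s=S_s^{N/(4s)}$ is a fixed constant the cross term is $O(\epsilon^{(N-2s)/2})$, \emph{not} $O(\epsilon^{N-2s})$. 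Because $(N-2s)/2<N-2s$ for every $N>2s$, the cross term dominates and the Minkowski route only gives $[u_\epsilon]_s^2\leq S_s^{N/(2s)}+O(\epsilon^{(N-2s)/2})$, which is strictly weaker than the claimed bound. This weaker error is not harmless downstream: for instance, in Lemma~\ref{lemma4.5} the cases $N=4s$ and $2s<N<4s$, and the range $4s<N\leq 6s$, all require an error term $o(\epsilon^{2s})$ (or better), which $\epsilon^{(N-2s)/2}$ fails to provide.

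To repair part (i) you should avoid the triangle inequality and instead expand the square exactly. Writing $w_\epsilon:=(1-\eta)U_\epsilon$ and $u_\epsilon=U_\epsilon-w_\epsilon$, one has
\[
[u_\epsilon]_s^2=[U_\epsilon]_s^2-2\langle U_\epsilon,w_\epsilon\rangle_X+[w_\epsilon]_s^2 .
\]
Since $U_\epsilon$ is a (weak, positive) solution of $(-\Delta)^s U_\epsilon=U_\epsilon^{2^*_s-1}$ in $\R^N$ and $w_\epsilon\in H^s(\R^N)$ is nonnegative, the cross term equals $\int_{\R^N}U_\epsilon^{2^*_s-1}w_\epsilon\,dx\geq0$ and can simply be dropped (or, more precisely, estimated as $O(\epsilon^N)$ using the decay $U_\epsilon^{2^*_s}\lesssim\epsilon^N|x|^{-2N}$ on $|x|\geq\delta$). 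This reduces matters cleanly to $[w_\epsilon]_s^2=O(\epsilon^{N-2s})$, which your region-splitting argument with the pointwise bound $U_\epsilon(x)\leq C\epsilon^{(N-2s)/2}|x|^{-(N-2s)}$ for $|x|\geq\delta$ and the Lipschitz control of $\eta$ can indeed deliver; alternatively, one can follow the direct splitting of the double integral over $\R^{2N}$ into $B_\delta\times B_\delta$, its complement and the mixed region, as in Servadei--Valdinoci's Proposition~21, without ever introducing a lossy cross term.
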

\begin{Remark}\label{Remark4.3}
From  Proposition \ref{proposition4.1} (ii) and Proposition \ref{Proposition4.2} (i), we get
\begin{equation}\label{4.6}
\int_{\R^{2N}}\frac{|u_\epsilon(x)-u_\epsilon(y)|^2}{|x-y|^{N+2s}}dxdy\leq
S^{\frac{N}{2s}}_s+O(\epsilon^{N-2s})=C(N,\mu)^{\frac{N-2s}{2N-\mu}\cdot\frac{N}{2s}}(S_s^H)^{\frac{N}{2s}}+O(\epsilon^{N-2s}).
\end{equation}
\end{Remark}
\begin{proposition}(\cite[Proposition 2.8]{MRDoubly})\label{Proposition4.4}~
Let $s\in(0,1)$ and $N>2s$. Then, the following estimate holds true as $\epsilon\rightarrow0$:
\begin{equation}\label{4.7}
\int_{\Omega}\int_{\Omega}\frac{|u_\epsilon(x)|^{2^*_\mu}|u_\epsilon(y)|^{2^*_\mu}}{|x-y|^{\mu}}dxdy\geq
C(N,\mu)^{\frac{N}{2s}}(S_s^H)^{\frac{2N-\mu}{2s}}-O(\epsilon^{N}).
\end{equation}
\end{proposition}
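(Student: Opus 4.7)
The plan is to write
\[
\int_\Omega\int_\Omega \frac{|u_\epsilon(x)|^{2^*_\mu}|u_\epsilon(y)|^{2^*_\mu}}{|x-y|^\mu}\,dx\,dy
= \int_{\R^N}\int_{\R^N}\frac{|U_\epsilon(x)|^{2^*_\mu}|U_\epsilon(y)|^{2^*_\mu}}{|x-y|^\mu}\,dx\,dy - E_\epsilon,
\]
then (a) identify the first term on the right with the announced main term, and (b) bound the cut-off error $E_\epsilon$ by $O(\epsilon^N)$.

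For (a), since $U_\epsilon$ achieves $S_s^H$ by Proposition~\ref{proposition4.1}\ref{Proposition 4.1.1}, the defining Rayleigh quotient is attained, whence
\[
\int_{\R^N}\int_{\R^N}\frac{|U_\epsilon|^{2^*_\mu}|U_\epsilon|^{2^*_\mu}}{|x-y|^\mu}\,dx\,dy = \bigl([U_\epsilon]_s^{2}/S_s^H\bigr)^{2^*_\mu}.
\]
Plugging in the scaling identity $[U_\epsilon]_s^2 = S_s^{N/(2s)}$ (recalled just before Proposition~\ref{Proposition4.2}) and the relation $S_s = C(N,\mu)^{1/2^*_\mu} S_s^H$ from Proposition~\ref{proposition4.1}\ref{Proposition 4.1.2}, and simplifying the exponent via $2^*_\mu (N-2s)/(2s) = (2N-\mu)/(2s)$, gives exactly $C(N,\mu)^{N/(2s)}(S_s^H)^{(2N-\mu)/(2s)}$.

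For (b), since $\supp(u_\epsilon)\subset B_{2\delta}\subset\Omega$ and $|u_\epsilon|^{2^*_\mu} = \eta^{2^*_\mu}|U_\epsilon|^{2^*_\mu}$, I would write
\[
E_\epsilon = \int_{\R^N}\int_{\R^N}\bigl(1-\eta(x)^{2^*_\mu}\eta(y)^{2^*_\mu}\bigr)\frac{|U_\epsilon(x)|^{2^*_\mu}|U_\epsilon(y)|^{2^*_\mu}}{|x-y|^\mu}\,dx\,dy.
\]
Because $\eta\equiv 1$ on $B_\delta$, the integrand vanishes on $B_\delta\times B_\delta$, and $0\le\eta\le 1$ together with the symmetry in $(x,y)$ yields
\[
E_\epsilon \le 2\int_{|x|\ge\delta}\int_{\R^N}\frac{|U_\epsilon(x)|^{2^*_\mu}|U_\epsilon(y)|^{2^*_\mu}}{|x-y|^\mu}\,dy\,dx.
\]
Rescaling $x=\epsilon x'$, $y=\epsilon y'$ and inserting $U_\epsilon(\epsilon x')=\epsilon^{-(N-2s)/2}u^*(x')$, the identity $2^*_\mu(N-2s)=2N-\mu$ makes every $\epsilon$-power cancel, so $E_\epsilon$ is bounded by the $\epsilon$-free tail
\[
2\int_{|x'|\ge\delta/\epsilon}\int_{\R^N}\frac{|u^*(x')|^{2^*_\mu}|u^*(y')|^{2^*_\mu}}{|x'-y'|^\mu}\,dy'\,dx'.
\]
The explicit formula for $u^*$ (Proposition~\ref{proposition4.1}\ref{Proposition 4.1.1}) yields $u^*(x)\lesssim(1+|x|)^{-(N-2s)}$, hence $|u^*|^{2^*_\mu}\lesssim(1+|x|)^{-(2N-\mu)}$. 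A near/far split of the inner integral according to whether $|y'-x'|\le|x'|/2$ or not gives $\int_{\R^N}|u^*(y')|^{2^*_\mu}|x'-y'|^{-\mu}\,dy'=O(|x'|^{-\mu})$ for $|x'|\gg 1$, so the outer integral reduces to $\int_{\delta/\epsilon}^\infty r^{-(2N-\mu)-\mu+N-1}\,dr = \int_{\delta/\epsilon}^\infty r^{-N-1}\,dr = O(\epsilon^N)$.

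The main obstacle is this last tail estimate: a blind application of the Hardy--Littlewood--Sobolev inequality to the truncated integral yields only $O(\epsilon^{(2N-\mu)/2})$, which is strictly weaker than $O(\epsilon^N)$ for every $\mu\in(0,N)$. Obtaining the sharp $O(\epsilon^N)$ genuinely relies on the pointwise decay of $u^*$ together with the rescaling, which reduces the problem to controlling a tail of a globally integrable function with quantified decay at infinity.
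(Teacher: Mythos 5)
The paper does not prove this proposition itself; it cites \cite[Proposition 2.8]{MRDoubly}. Your argument is correct and self-contained, and it follows the standard route: since $\supp u_\epsilon\subset B_{2\delta}\subset\Omega$, the $\Omega\times\Omega$ integral equals the $\R^N\times\R^N$ integral of $\eta^{2^*_\mu}(x)\eta^{2^*_\mu}(y)|U_\epsilon(x)|^{2^*_\mu}|U_\epsilon(y)|^{2^*_\mu}/|x-y|^\mu$; the main term is evaluated exactly from the minimizer identity $[U_\epsilon]_s^2=S_s^{N/(2s)}$ and Proposition~\ref{proposition4.1}\ref{Proposition 4.1.2}, with the exponent bookkeeping $(N/(2s)-1)\,2^*_\mu=(2N-\mu)/(2s)$ as you computed; and the nonnegative error term is handled by rescaling (the $\epsilon$-powers cancel because $2^*_\mu(N-2s)=2N-\mu$) and the pointwise decay $u^*(x)\lesssim(1+|x|)^{-(N-2s)}$, which after the near/far split yields the inner bound $O(|x'|^{-\mu})$ and hence an outer tail of order $\int_{\delta/\epsilon}^\infty r^{-N-1}\,dr=O(\epsilon^N)$. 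Your closing remark is also accurate: applying Hardy--Littlewood--Sobolev directly to the truncated double integral gives $(\int_{|x|\ge\delta}|U_\epsilon|^{2^*_s})^{(2N-\mu)/(2N)}\cdot O(1)=O(\epsilon^{(2N-\mu)/2})$, which is strictly weaker than $O(\epsilon^N)$ for all $\mu\in(0,N)$, so the pointwise decay is genuinely what delivers the stated rate.
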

Now consider the following minimization problem
\begin{equation*}
\begin{split}
S_{s,\lambda}=\inf_{v\in X(\Omega)\backslash \{0\}}S_{s,\lambda}(v),
\end{split}
\end{equation*}
where
\begin{equation*}
\begin{split}
S_{s,\lambda}(v)=\frac{\int_{\R^{2N}}\frac{|v(x)-v(y)|^2}{|x-y|^{N+2s}}dxdy-\lambda\int_{\R^N}|v(x)|^2dx}{\left(\int_{\Omega}\int_{\Omega}\frac{|v(x)|^{2^*_\mu}|v(y)|^{2^*_\mu}}{|x-y|^{\mu}}dxdy\right)^\frac{1}{2^*_\mu}}.
\end{split}
\end{equation*}
\begin{lemma}\label{lemma4.5} Let $N>2s$ and $s\in(0,1)$. Then the following facts hold.
\begin{enumroman}
\item \label{proposition4.5.a} For $N\geq 4s$,
\begin{equation*}
S_{s,\lambda}(u_\epsilon)<S^H_s, \text{ for all } \lambda>0, \text{  provided } \epsilon>0 \text{ is
sufficiently small }.
\end{equation*}
\item \label{proposition4.5.b} For $2s<N<4s$,   there exists $\lambda_s>0$ such that for all
    $\lambda>\lambda_s$, we have
\begin{equation*}
\begin{split}
S_{s,\lambda}(u_\epsilon)<S^H_s,  \text{ provided } \epsilon>0 \text{ is sufficiently small }.
\end{split}
\end{equation*}
\end{enumroman}
\end{lemma}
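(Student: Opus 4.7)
}
The plan is to substitute the sharp expansions of Proposition \ref{Proposition4.2} and Proposition \ref{Proposition4.4} directly into the quotient $S_{s,\lambda}(u_\epsilon)$ and compare with $S_s^H$. Write
\begin{equation*}
N_\epsilon:=\int_{\R^{2N}}\frac{|u_\epsilon(x)-u_\epsilon(y)|^2}{|x-y|^{N+2s}}dxdy,\qquad M_\epsilon:=\int_{\R^N}|u_\epsilon|^2dx,\qquad D_\epsilon:=\int_\Omega\int_\Omega\frac{|u_\epsilon(x)|^{2^*_\mu}|u_\epsilon(y)|^{2^*_\mu}}{|x-y|^\mu}dxdy,
\end{equation*}
so that $S_{s,\lambda}(u_\epsilon)=(N_\epsilon-\lambda M_\epsilon)/D_\epsilon^{1/2^*_\mu}$. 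From Remark \ref{Remark4.3} and Proposition \ref{Proposition4.4}, using Proposition \ref{proposition4.1}(ii) in the form $S_s=C(N,\mu)^{1/2^*_\mu}S_s^H$, I would first check the book-keeping identity $S_s^{N/2s}=S_s^H\cdot\bigl[C(N,\mu)^{N/2s}(S_s^H)^{(2N-\mu)/2s}\bigr]^{1/2^*_\mu}$. This shows that, with the $\lambda$-term dropped, the quotient converges to $S_s^H$ as $\epsilon\to 0$; the task is to quantify by how much $-\lambda M_\epsilon$ pushes it strictly below $S_s^H$.

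Next, expanding the denominator by $(D-O(\epsilon^N))^{1/2^*_\mu}\ge D^{1/2^*_\mu}-O(\epsilon^N)$ (for $\epsilon$ small), the strict inequality $S_{s,\lambda}(u_\epsilon)<S_s^H$ is equivalent to
\begin{equation*}
N_\epsilon-\lambda M_\epsilon<S_s^H\cdot D_\epsilon^{1/2^*_\mu}.
\end{equation*}
Using the upper bound on $N_\epsilon$ from Remark \ref{Remark4.3} together with the cancellation of the leading $S_s^{N/2s}$ terms noted above, this reduces to verifying
\begin{equation*}
\lambda M_\epsilon>O(\epsilon^{N-2s})+O(\epsilon^N)=O(\epsilon^{N-2s}),
\end{equation*}
for all sufficiently small $\epsilon>0$, where the implicit constant is independent of $\lambda$.

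The proof is then completed by inserting the three cases of Proposition \ref{Proposition4.2}(iii). If $N>4s$, then $M_\epsilon\ge C_s\epsilon^{2s}+O(\epsilon^{N-2s})$ and $\epsilon^{2s}$ strictly dominates $\epsilon^{N-2s}$, so for any fixed $\lambda>0$ the inequality holds once $\epsilon$ is small. If $N=4s$, then $M_\epsilon\gtrsim\epsilon^{2s}|\log\epsilon|$ while the competing error is $\epsilon^{N-2s}=\epsilon^{2s}$, and the logarithmic blow-up again settles the matter for every $\lambda>0$. In the sub-critical range $2s<N<4s$ the two scales coincide, $M_\epsilon\ge C_s\epsilon^{N-2s}$ against an error of the same order; here one must choose $\lambda>\lambda_s$ with $\lambda_s$ large enough to beat the implicit constant in the $O(\epsilon^{N-2s})$ remainder, which yields the threshold in part (ii).

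The principal technical point—and the step I expect to require the most care—is keeping track of the two sources of the $\epsilon^{N-2s}$-error (from the Gagliardo-seminorm estimate on $N_\epsilon$ and from the denominator expansion after raising to the power $1/2^*_\mu$) and making sure the constants are independent of $\lambda$, so that the threshold $\lambda_s$ in the sub-critical case is well defined and does not depend on $\epsilon$.
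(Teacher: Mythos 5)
Your proposal is correct and follows essentially the same route as the paper: both substitute the asymptotic expansions from Proposition \ref{Proposition4.2}(iii), Remark \ref{Remark4.3} and Proposition \ref{Proposition4.4} into the Rayleigh quotient, observe that the $\epsilon$-independent leading terms cancel exactly to $S_s^H$, and then compare the order of the $-\lambda M_\epsilon$ contribution against the residual $O(\epsilon^{N-2s})$ error in the three regimes $N>4s$, $N=4s$, $2s<N<4s$. Your explicit verification of the book-keeping identity $S_s^{N/2s}=S_s^H\cdot\bigl[C(N,\mu)^{N/2s}(S_s^H)^{(2N-\mu)/2s}\bigr]^{1/2^*_\mu}$ makes transparent a cancellation the paper performs only implicitly, but it is the same argument.
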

\begin{proof}
Case~1: $N>4s$. By Proposition \ref{Proposition4.2} (iii), \eqref{4.6} and \eqref{4.7}, we infer
\begin{equation*}
\begin{split}
S_{s,\lambda}(u_\epsilon)&\leq \frac{C(N,\mu)^{\frac{N-2s}{2N-\mu}\cdot\frac{N}{2s}}(S_s^H)^{\frac{N}{2s}}+O(\epsilon^{N-2s})-\lambda
C_s\epsilon^{2s}+O(\epsilon^{N-2s})}{\left(C(N,\mu)^{\frac{N}{2s}}(S_s^H)^{\frac{2N-\mu}{2s}}-O(\epsilon^{N})\right)^{\frac{1}{2^*_\mu}}}\\
&\leq S^H_s -\lambda C_s\epsilon^{2s}+O(\epsilon^{N-2s})\\
&< S_s^H, \text{  if $\lambda>0$, $\epsilon>0$ is sufficiently small.}
\end{split}
\end{equation*}
Case~2: $N=4s$.
\begin{equation*}
\begin{split}
S_{s,\lambda}(u_\epsilon)&\leq \frac{C(N,\mu)^{\frac{N-2s}{2N-\mu}\cdot\frac{N}{2s}}(S_s^H)^{\frac{N}{2s}}+O(\epsilon^{N-2s})-\lambda
C_s\epsilon^{2s}|log\epsilon|+O(\epsilon^{2s})}{\left(C(N,\mu)^{\frac{N}{2s}}(S_s^H)^{\frac{2N-\mu}{2s}}-O(\epsilon^{N})\right)^{\frac{1}{2^*_\mu}}}\\
&\leq S_s^H -\lambda C_s\epsilon^{2s}|log\epsilon|+O(\epsilon^{2s})\\
&< S_s^H, \text{  if $\lambda>0$, $\epsilon>0$ is sufficiently small.}
\end{split}
\end{equation*}
Case~3: $2s<N<4s$.
\begin{equation*}
\begin{split}
S_{s,\lambda}(u_\epsilon)&\leq \frac{C(N,\mu)^{\frac{N-2s}{2N-\mu}\cdot\frac{N}{2s}}(S_s^H)^{\frac{N}{2s}}+O(\epsilon^{N-2s})-\lambda
C_s\epsilon^{N-2s}+O(\epsilon^{2s})}{\left(C(N,\mu)^{\frac{N}{2s}}(S_s^H)^{\frac{2N-\mu}{2s}}-O(\epsilon^{N})\right)^{\frac{1}{2^*_\mu}}}\\
&\leq S_s^H +\epsilon^{N-2s}(O(1)-\lambda C_s)+O(\epsilon^{2s}),\\
&< S_s^H,
\end{split}
\end{equation*}
for all $\lambda >0 $ large enough ($\lambda\geq \lambda_s$), $\epsilon>0$ sufficiently small.
\end{proof}

\subsection{Compactness convergence}
\begin{proposition}\label{proposition4.6}
Let $s\in(0,1),N>2s$ and $0<\mu<N$. If $\{u_n\},\{v_n\}$ are  bounded sequences in
$L^{\frac{2N}{N-2s}}(\Omega)$ such that $u_n\rightarrow u,v_n\rightarrow v$ almost everywhere in $\Omega$ as
$n\rightarrow\infty$, we have
\begin{eqnarray*}
\begin{aligned}
&\int_{\Omega}\int_{\Omega}\frac{|u_n(x)|^{2^*_\mu}|v_n(y)|^{2^*_\mu}}{|x-y|^{\mu}}dxdy-\int_{\Omega}\int_{\Omega}\frac{|(u_n-u)(x)|^{2^*_\mu}|(v_n-v)(y)|^{2^*_\mu}}{|x-y|^{\mu}}dxdy\\
&\rightarrow \int_{\Omega}\int_{\Omega}\frac{|u(x)|^{2^*_\mu}|v(y)|^{2^*_\mu}}{|x-y|^{\mu}}dxdy,
\end{aligned}
\end{eqnarray*}
as $n\rightarrow\infty$.
\end{proposition}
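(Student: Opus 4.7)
The plan is to set $p := 2^{*}_{\mu}$, $r := 2N/(2N-\mu)$, and record the critical identity $pr = 2^{*}_{s}$. Writing $w_n := u_n - u$, $z_n := v_n - v$, and introducing the Brezis-Lieb residuals $A_n := |u_n|^p - |w_n|^p$ and $B_n := |v_n|^p - |z_n|^p$, the pointwise identity
\[
|u_n(x)|^p|v_n(y)|^p - |w_n(x)|^p|z_n(y)|^p = A_n(x)\,|v_n(y)|^p + |w_n(x)|^p\,B_n(y)
\]
reduces the problem to showing
\[
\int_\Omega\int_\Omega \frac{A_n(x)\,|v_n(y)|^p}{|x-y|^\mu}\,dxdy \longrightarrow \int_\Omega\int_\Omega \frac{|u(x)|^p\,|v(y)|^p}{|x-y|^\mu}\,dxdy
\]
and
\[
\int_\Omega\int_\Omega \frac{|w_n(x)|^p\,B_n(y)}{|x-y|^\mu}\,dxdy \longrightarrow 0.
\]

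The decisive technical step is a refined $L^r$-version of the Brezis-Lieb lemma. Starting from the standard pointwise bound $\bigl||a+b|^p - |a|^p - |b|^p\bigr| \le \varepsilon|a|^p + C_\varepsilon|b|^p$ (valid for $p > 1$; here $p > 1$ since $\mu < N + 2s$), I set $a = w_n$, $b = u$, raise both sides to the $r$-th power, and use $pr = 2^{*}_{s}$ to dominate the result by $C\varepsilon^r |w_n|^{2^{*}_{s}} + C\,C_\varepsilon^r |u|^{2^{*}_{s}}$. Because $|u|^{2^{*}_{s}} \in L^1(\Omega)$ and $\{w_n\}$ is bounded in $L^{2^{*}_{s}}(\Omega)$, an $\varepsilon$-truncation combined with dominated convergence delivers $A_n \to |u|^p$ strongly in $L^r(\Omega)$, and symmetrically $B_n \to |v|^p$ in $L^r(\Omega)$. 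In addition, since $r > 1$, a.e.\ convergence plus $L^r$-boundedness on the bounded domain $\Omega$ (Egorov combined with absolute continuity of the integral) produces the weak convergences $|v_n|^p \rightharpoonup |v|^p$ and $|w_n|^p \rightharpoonup 0$ in $L^r(\Omega)$.

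The rest is Hardy-Littlewood-Sobolev bookkeeping through the Riesz-type operator $I_\mu\varphi(x) := \int_\Omega \varphi(y)\,|x-y|^{-\mu}\,dy$, which by Proposition \ref{Proposition 2.1} is bounded from $L^r(\Omega)$ to $L^{r'}(\Omega)$ with $r' = 2N/\mu$. For the first target, split $A_n\,|v_n|^p = (A_n - |u|^p)\,|v_n|^p + |u|^p\,|v_n|^p$: the first summand contributes $o(1)\cdot O(1)$ to the double integral by HLS and the strong $L^r$-convergence of $A_n$, while the second equals $\int_\Omega |v_n|^p\,I_\mu(|u|^p)\,dx$ and converges to $\int_\Omega |v|^p\,I_\mu(|u|^p)\,dx$ by pairing the weakly convergent $|v_n|^p$ with the fixed $L^{r'}$-function $I_\mu(|u|^p)$. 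For the second target, rewrite as
\[
\int_\Omega |w_n|^p\,I_\mu B_n\,dx = \int_\Omega |w_n|^p\,I_\mu(|v|^p)\,dx + \int_\Omega |w_n|^p\bigl(I_\mu B_n - I_\mu(|v|^p)\bigr)\,dx;
\]
the first integral vanishes by pairing $|w_n|^p \rightharpoonup 0$ against the fixed $I_\mu(|v|^p) \in L^{r'}$, and the second is bounded by $C\,\||w_n|^p\|_{L^r}\,\|B_n - |v|^p\|_{L^r} = O(1)\cdot o(1)$, once more by the strong $L^r$-convergence of $B_n$.

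The principal obstacle is the first step, the upgrade of Brezis-Lieb from $L^1$ to $L^r$: the classical lemma only yields $L^1$-convergence of the residuals $A_n$, $B_n$, which is too weak to apply HLS directly. The upgrade hinges precisely on the exponent matching $pr = 2^{*}_{s}$ delivered by the critical Hardy-Littlewood-Sobolev setting, which is exactly what makes the $r$-th power of the pointwise Brezis-Lieb residual dominated by the available $L^1$ data $|u|^{2^{*}_{s}}$ and $\|w_n\|_{L^{2^{*}_{s}}}$. Once this strong convergence is secured, everything that follows is a routine combination of the HLS duality $I_\mu : L^r \to L^{r'}$ with weak-strong convergence pairings.
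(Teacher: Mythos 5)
Your proof is correct and follows essentially the same Brezis--Lieb plus Hardy--Littlewood--Sobolev strategy as the paper, differing only in the choice of algebraic splitting (you use the two-term identity $a_nb_n-\tilde a_n\tilde b_n=A_nb_n+\tilde a_nB_n$, rather than the paper's three-term decomposition \eqref{4.12}). You are in fact more careful than the paper at the decisive step: you explicitly establish strong $L^{2N/(2N-\mu)}(\Omega)$ convergence of the Brezis--Lieb residuals $A_n,B_n$ via the $\varepsilon$--$C_\varepsilon$ pointwise bound and the exponent identity $2^*_\mu\cdot\tfrac{2N}{2N-\mu}=2^*_s$, which is exactly what makes the weak--strong HLS pairings close, whereas the paper records only weak ($\rightharpoonup$) convergences of these residuals, which on their own would not dispatch the cross terms in its decomposition.
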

\begin{proof}
First, similarly to the proof of the Br\'{e}zis-Lieb Lemma in \cite{MRBrezis}, we know that
\begin{eqnarray}\label{4.8}
\begin{aligned}
|u_n|^{2^*_\mu}-|u_n-u|^{2^*_\mu}\rightharpoonup |u|^{2^*_\mu},
\end{aligned}
\end{eqnarray}
\begin{eqnarray}\label{4.9}
\begin{aligned}
|v_n|^{2^*_\mu}-|v_n-v|^{2^*_\mu}\rightharpoonup |u|^{2^*_\mu},
\end{aligned}
\end{eqnarray}
in $L^{\frac{2N}{2N-\mu}}(\Omega)$ as $n\rightarrow\infty$. The Hardy-Littlewood-Sobolev inequality implies that
\begin{eqnarray}\label{4.10}
\begin{aligned}
\int_{\Omega}\frac{|u_n(y)|^{2^*_\mu}-|(u_n-u)(y)|^{2^*_\mu}}{|x-y|^{\mu}}dy\rightharpoonup \int_{\Omega}\frac{|u(y)|^{2^*_\mu}}{|x-y|^{\mu}}dy
\end{aligned}
\end{eqnarray}
\begin{eqnarray}\label{4.11}
\begin{aligned}
\int_{\Omega}\frac{|v_n(y)|^{2^*_\mu}-|(v_n-v)(y)|^{2^*_\mu}}{|x-y|^{\mu}}dy\rightharpoonup \int_{\Omega}\frac{|v(y)|^{2^*_\mu}}{|x-y|^{\mu}}dy
\end{aligned}
\end{eqnarray}
in $L^{\frac{2N}{\mu}}(\Omega)$ as $n\rightarrow\infty$. On the other hand, we notice that
\begin{eqnarray}\label{4.12}
\begin{aligned}
&\int_{\Omega}\int_{\Omega}\frac{|u_n(x)|^{2^*_\mu}|v_n(y)|^{2^*_\mu}}{|x-y|^{\mu}}dxdy-\int_{\Omega}\int_{\Omega}\frac{|(u_n-u)(x)|^{2^*_\mu}|(v_n-v)(y)|^{2^*_\mu}}{|x-y|^{\mu}}dxdy\\
&=\int_{\Omega}\int_{\Omega}\frac{(|u_n(x)|^{2^*_\mu}-|(u_n-u)(x)|^{2^*_\mu})(|v_n(y)|^{2^*_\mu}-|(v_n-v)(y)|^{2^*_\mu})}{|x-y|^{\mu}}dxdy\\
&~~~+\int_{\Omega}\int_{\Omega}\frac{(|u_n(x)|^{2^*_\mu}-|(u_n-u)(x)|^{2^*_\mu})|(v_n-v)(y)|^{2^*_\mu}}{|x-y|^{\mu}}dxdy\\
&~~~+\int_{\Omega}\int_{\Omega}\frac{(|v_n(x)|^{2^*_\mu}-|(v_n-v)(x)|^{2^*_\mu}) |(u_n-u)(y)|^{2^*_\mu}}{|x-y|^{\mu}}dxdy.
\end{aligned}
\end{eqnarray}
Since $|u_n-u|^{2^*_\mu}\rightharpoonup 0,|v_n-v|^{2^*_\mu}\rightharpoonup 0$ in $L^{\frac{2N}{2N-\mu}}(\Omega)$
as $n\rightarrow\infty$. From $\eqref{4.8}-\eqref{4.12}$, we know that the result holds.
\end{proof}

\begin{lemma}\label{lemma4.7}(Boundedness)
The $(PS)_c$ sequence $\{(u_n,v_n)\}\subset Y(\Omega)$ is bounded.
\end{lemma}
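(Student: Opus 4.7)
The plan is the standard trick of testing the derivative against the sequence itself and taking a clever linear combination with the energy so that the critical (Hardy--Littlewood--Sobolev) nonlinearity cancels, leaving a coercive quadratic form on $Y(\Omega)$.

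First, I would record the identity obtained by testing $J_s'(u_n,v_n)$ against $(u_n,v_n)$. Using \eqref{4.13} and the pointwise fact that $u_n\,|u_n^+|^{2^*_\mu-1}=|u_n^+|^{2^*_\mu}$ (because $u_n^-\cdot u_n^+=0$ a.e.), and similarly for $v_n$, one obtains
\begin{equation*}
J_s'(u_n,v_n)(u_n,v_n)=\|(u_n,v_n)\|_Y^2-\int_{\Omega}(A(u_n,v_n),(u_n,v_n))_{\R^2}\,dx-2\int_{\Omega}\!\int_{\Omega}\frac{|u_n^+(x)|^{2^*_\mu}|v_n^+(y)|^{2^*_\mu}}{|x-y|^{\mu}}dxdy.
\end{equation*}
Comparing this with the expression for $J_s(u_n,v_n)$, I form the combination
\begin{equation*}
J_s(u_n,v_n)-\frac{1}{2\cdot 2^*_\mu}J_s'(u_n,v_n)(u_n,v_n)=\Bigl(\tfrac{1}{2}-\tfrac{1}{2\cdot 2^*_\mu}\Bigr)\Bigl[\|(u_n,v_n)\|_Y^2-\int_{\Omega}(A(u_n,v_n),(u_n,v_n))_{\R^2}\,dx\Bigr],
\end{equation*}
so the critical nonlinearity drops out completely. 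Since $2^*_\mu=\frac{2N-\mu}{N-2s}>1$, the prefactor $\bigl(\tfrac{1}{2}-\tfrac{1}{2\cdot 2^*_\mu}\bigr)$ is strictly positive.

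Next, I would bound the bracket below. Using \eqref{2.5} together with the variational characterization of $\lambda_{1,s}$, and invoking the hypothesis of Theorem \ref{theorem1.3} that $\mu_2<\lambda_{1,s}$, I get
\begin{equation*}
\|(u_n,v_n)\|_Y^2-\int_{\Omega}(A(u_n,v_n),(u_n,v_n))_{\R^2}\,dx\geq \min\Bigl\{1,\,1-\tfrac{\mu_2}{\lambda_{1,s}}\Bigr\}\|(u_n,v_n)\|_Y^2,
\end{equation*}
exactly as in \eqref{3.4}. Combining this with the previous identity yields a constant $C_0>0$ such that
\begin{equation*}
C_0\|(u_n,v_n)\|_Y^2\leq J_s(u_n,v_n)-\frac{1}{2\cdot 2^*_\mu}J_s'(u_n,v_n)(u_n,v_n).
\end{equation*}

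Finally, I would use the Palais--Smale hypothesis: $J_s(u_n,v_n)=c+o_n(1)$ and $\|J_s'(u_n,v_n)\|_{Y^*}=o_n(1)$. Hence the right-hand side is bounded by
\begin{equation*}
|c|+1+\frac{1}{2\cdot 2^*_\mu}\|J_s'(u_n,v_n)\|_{Y^*}\,\|(u_n,v_n)\|_Y\leq |c|+1+\|(u_n,v_n)\|_Y
\end{equation*}
for $n$ large. Therefore $C_0\|(u_n,v_n)\|_Y^2\leq |c|+1+\|(u_n,v_n)\|_Y$, which forces $\{\|(u_n,v_n)\|_Y\}$ to be bounded.

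The only mildly delicate point is the sign-splitting in the nonlinearity (the appearance of $u_n^+$ rather than $u_n$ in \eqref{4.13}); this is handled by the elementary identity $u_n^-\cdot u_n^+=0$ so that the test against $(u_n,v_n)$ recovers a positive multiple of $\int\!\int|u_n^+|^{2^*_\mu}|v_n^+|^{2^*_\mu}|x-y|^{-\mu}$, ensuring the cancellation in the combination above. Everything else is routine.
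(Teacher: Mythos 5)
Your proof is correct and follows exactly the same route as the paper: form the combination $J_s(u_n,v_n)-\frac{1}{2\cdot 2^*_\mu}J'_s(u_n,v_n)(u_n,v_n)$ to cancel the critical nonlinearity, bound the remaining quadratic form below via $\mu_2<\lambda_{1,s}$, and bound it above by the $(PS)_c$ hypothesis. Your extra remark explaining why the $u_n^+$-splitting in \eqref{4.13} still produces the clean cancellation when tested against $(u_n,v_n)$ is a helpful detail that the paper leaves implicit.
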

\begin{proof}
From \eqref{2.5} and the definition of $\lambda_{1,s}$,   we have
\begin{eqnarray*}
\begin{aligned}
C+C\|\left(u_n,v_n\right)\|_Y &\geq
J_s\left(u_n,v_n\right)-\frac{1}{2\cdot2^*_\mu}J'_s\left(u_n,v_n)(u_n,v_n\right)\\
&=\left(\frac{1}{2}-\frac{1}{2\cdot2^*_\mu}\right)\|\left(u_n,v_n\right)\|_Y^2\\
&~~-\left(\frac{1}{2}-\frac{1}{2\cdot2^*_\mu}\right)\int_{\R^N}\left(A(u_n,v_n),(u_n,v_n)\right)_{\R^2}dx\\
&\geq \left(\frac{1}{2}-\frac{1}{2\cdot2^*_\mu}\right)(1-\frac{\mu_2}{\lambda_{1,s}})\|(u_n,v_n)\|_Y^2.
\end{aligned}
\end{eqnarray*}
Since $\mu_2 <\lambda_{1,s}$, the assertion follows.
\end{proof}

\begin{lemma}\label{lemma4.8}
If $\{(u_n,v_n)\}\subset Y(\Omega)$ be a $(PS)_c$ sequence for the functional $J_s$ with
\begin{eqnarray*}
\begin{aligned}
c<\frac{N+2s-\mu}{2N-\mu}(S_s^H)^{\frac{2N-\mu}{N+2s-\mu}},
\end{aligned}
\end{eqnarray*}
then $\{(u_n,v_n)\}$ has a convergent subsequence.
\end{lemma}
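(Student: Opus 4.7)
The plan is a standard concentration-compactness argument tailored to the Choquard-type critical term, built on the Br\'ezis--Lieb splitting supplied by Proposition \ref{proposition4.6}. By Lemma \ref{lemma4.7} the sequence $\{(u_n,v_n)\}$ is bounded in $Y(\Omega)$, so up to a subsequence it converges weakly to some $(u,v)\in Y(\Omega)$, strongly in $L^r(\Omega)\times L^r(\Omega)$ for every $r\in[1,2^*_s)$, and pointwise a.e. Passing to the limit in $J_s'(u_n,v_n)(\varphi,\psi)\to 0$ for arbitrary $(\varphi,\psi)\in Y(\Omega)$---the quadratic and the $A$-bilinear parts converge by weak convergence and by the compact embedding $X(\Omega)\hookrightarrow L^2(\Omega)$, while the nonlocal Choquard integrals pass via Hardy--Littlewood--Sobolev combined with a.e.\ convergence---yields $J_s'(u,v)=0$. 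Testing this identity with $(u,v)$ and using $(u^+)^{2^*_\mu-1}u=(u^+)^{2^*_\mu}$, one obtains
\[
\|(u,v)\|_Y^2-\int_{\Omega}(A(u,v),(u,v))_{\R^2}\,dx=2\int_{\Omega}\int_{\Omega}\frac{|u^+(x)|^{2^*_\mu}|v^+(y)|^{2^*_\mu}}{|x-y|^{\mu}}\,dxdy,
\]
whence $J_s(u,v)=\frac{N+2s-\mu}{2N-\mu}\int_{\Omega}\int_{\Omega}\frac{|u^+|^{2^*_\mu}|v^+|^{2^*_\mu}}{|x-y|^\mu}\,dxdy\ge 0$.

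Next I set $w_n:=u_n-u$ and $z_n:=v_n-v$. Weak convergence in the Hilbert space $Y(\Omega)$ gives $\|(u_n,v_n)\|_Y^2=\|(w_n,z_n)\|_Y^2+\|(u,v)\|_Y^2+o(1)$, while Proposition \ref{proposition4.6} applied to the positive parts (which inherit both the a.e.\ convergence and the boundedness in $L^{2^*_s}$) yields
\[
\int_{\Omega}\int_{\Omega}\frac{|u_n^+|^{2^*_\mu}|v_n^+|^{2^*_\mu}}{|x-y|^\mu}\,dxdy=\int_{\Omega}\int_{\Omega}\frac{|w_n^+|^{2^*_\mu}|z_n^+|^{2^*_\mu}}{|x-y|^\mu}\,dxdy+\int_{\Omega}\int_{\Omega}\frac{|u^+|^{2^*_\mu}|v^+|^{2^*_\mu}}{|x-y|^\mu}\,dxdy+o(1).
\]
The $A$-bilinear contribution passes to the limit by compactness. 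Combining these splittings with $J_s'(u,v)(u,v)=0$ produces the two key identities
\[
J_s(u_n,v_n)=J_s(u,v)+\tfrac{1}{2}\|(w_n,z_n)\|_Y^2-\tfrac{1}{2^*_\mu}\int_{\Omega}\int_{\Omega}\frac{|w_n^+|^{2^*_\mu}|z_n^+|^{2^*_\mu}}{|x-y|^\mu}\,dxdy+o(1),
\]
\[
o(1)=J_s'(u_n,v_n)(u_n,v_n)=\|(w_n,z_n)\|_Y^2-2\int_{\Omega}\int_{\Omega}\frac{|w_n^+|^{2^*_\mu}|z_n^+|^{2^*_\mu}}{|x-y|^\mu}\,dxdy+o(1).
\]

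Assume for contradiction that $(u_n,v_n)\not\to(u,v)$ in $Y(\Omega)$. Up to a further subsequence, $\|(w_n,z_n)\|_Y^2\to l>0$ and the double integral tends to some $L\ge 0$; the second identity then forces $l=2L>0$. Using $[u^\pm]_s\le[u]_s$, so that $\|(w_n^+,z_n^+)\|_Y\le\|(w_n,z_n)\|_Y$, the definition \eqref{4.4} together with Proposition \ref{proposition4.1} gives
\[
\|(w_n,z_n)\|_Y^2\ge 2S_s^H\Bigl(\int_{\Omega}\int_{\Omega}\frac{|w_n^+|^{2^*_\mu}|z_n^+|^{2^*_\mu}}{|x-y|^\mu}\,dxdy\Bigr)^{1/2^*_\mu},
\]
whence $2L\ge 2S_s^H L^{1/2^*_\mu}$, forcing $L\ge (S_s^H)^{(2N-\mu)/(N+2s-\mu)}$. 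Substituting into the energy identity,
\[
c=J_s(u,v)+\frac{N+2s-\mu}{2N-\mu}L\ge \frac{N+2s-\mu}{2N-\mu}(S_s^H)^{\frac{2N-\mu}{N+2s-\mu}},
\]
contradicting the hypothesis. Therefore $L=0$, hence $l=0$, and $(u_n,v_n)\to(u,v)$ strongly in $Y(\Omega)$. The delicate step is the nonlocal Br\'ezis--Lieb decomposition for the coupled Choquard integral, but this is exactly the content of Proposition \ref{proposition4.6}; the remaining work is bookkeeping of the quadratic and compact terms.
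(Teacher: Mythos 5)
Your argument is correct and follows essentially the same route as the paper's own proof: boundedness from Lemma \ref{lemma4.7}, weak limit $(u,v)$, showing $(u,v)$ is a critical point of $J_s$ so that $J_s(u,v)\ge 0$, the nonlocal Br\'ezis--Lieb decomposition of Proposition \ref{proposition4.6} to split both $J_s(u_n,v_n)$ and $J_s'(u_n,v_n)(u_n,v_n)$, identifying the limits $l$ and $L$ with $l=2L$, and finally invoking the definition of $\widetilde{S}_s^H$ together with $\widetilde{S}_s^H=2S_s^H$ to force $L=0$ under the energy threshold. The only cosmetic difference is how you justify the $\widetilde{S}_s^H$ inequality applied to the positive parts: you use $[w^\pm]_s\le [w]_s$, whereas the paper implicitly relies on the pointwise bound $|w_n^+|\le|w_n|$ inside the Choquard integral; both are valid and yield the same estimate.
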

\begin{proof}
Let $(u_0,v_0)$ be the weak limit of $\{(u_n,v_n)\}$ and define $w_n:=u_n-u_0$, $z_n:=v_n-v_0$, then we know
$w_n\rightharpoonup 0, z_n\rightharpoonup 0$ in $H^s(\R^N)$ and $w_n\rightarrow 0$ a.e. in $\R^N, z_n\rightarrow 0$ a.e. in $\R^N$. Moreover, by \cite[Lemma 5]{PeSqYa2} and Proposition \ref{proposition4.6}, we know
\begin{eqnarray*}
\begin{aligned}
&\|u_n\|_X^2=\|w_n\|_X^2+\|u_0\|_X^2+o_n(1),\\
&\|v_n\|_X^2=\|z_n\|_X^2+\|v_0\|_X^2+o_n(1),
\end{aligned}
\end{eqnarray*}
and
\begin{eqnarray*}
\begin{aligned}
\int_{\Omega}\int_{\Omega}\frac{|u^+_n(x)|^{2^*_\mu}|v^+_n(y)|^{2^*_\mu}}{|x-y|^{\mu}}dxdy=&\int_{\Omega}\int_{\Omega}\frac{|w^+_n(x)|^{2^*_\mu}|z^+_n(y)|^{2^*_\mu}}{|x-y|^{\mu}}dxdy\\
&+ \int_{\Omega}\int_{\Omega}\frac{|u^+_0(x)|^{2^*_\mu}|v^+_0(y)|^{2^*_\mu}}{|x-y|^{\mu}}dxdy+o_n(1).
\end{aligned}
\end{eqnarray*}
Consequently, we have
\begin{eqnarray*}
\begin{aligned}
c\leftarrow
J_s(u_n,v_n)&=\frac{1}{2}\int_{\R^{2N}}\frac{|u(x)-u(y)|^2+|v(x)-v(y)|^2}{|x-y|^{N+2s}}dxdy\\
&~~-\frac{1}{2}\int_{\R^N}(A(u_n,v_n),(u_n,v_n))_{\R^2}dx-\frac{1}{2^*_\mu}\int_{\Omega}\int_{\Omega}\frac{|u^+_n(x)|^{2^*_\mu}|v^+_n(y)|^{2^*_\mu}}{|x-y|^{\mu}}dxdy\\
&\geq\frac{1}{2}\Big (\int_{\R^{2N}}\frac{|w_n(x)-w_n(y)|^2}{|x-y|^{N+2s}}dxdy+\int_{\R^{2N}}\frac{|u_0(x)-u_0(y)|^2}{|x-y|^{N+2s}}dxdy\\
&~~+\int_{\R^{2N}}\frac{|z_n(x)-z_n(y)|^2}{|x-y|^{N+2s}}dxdy+\int_{\R^{2N}}\frac{|v_0(x)-v_0(y)|^2}{|x-y|^{N+2s}}dxdy\Big)\\
&~~-\frac{\mu_2}{2}\Big(\int_{\R^N}|w_n|^2dx
+\int_{\R^N}|z_n|^2dx+\int_{\R^N}|u_0|^2dx+\int_{\R^N}|v_0|^2dx\Big)\\
&~~-\frac{1}{2^*_\mu}\Big(\int_{\Omega}\int_{\Omega}\frac{|w^+_n(x)|^{2^*_\mu}|z^+_n(y)|^{2^*_\mu}}{|x-y|^{\mu}}dxdy
+ \int_{\Omega}\int_{\Omega}\frac{|u^+_0(x)|^{2^*_\mu}|v^+_0(y)|^{2^*_\mu}}{|x-y|^{\mu}}dxdy\Big)\\
&~~+o_n(1),\\
\end{aligned}
\end{eqnarray*}
so
\begin{eqnarray}\label{4.14}
\begin{aligned}
c &\geq J_s(u_0,v_0)+\frac{1}{2}\Big(\int_{\R^{2N}}\frac{|w_n(x)-w_n(y)|^2}{|x-y|^{N+2s}}dxdy\\
&~~+\int_{\R^{2N}}\frac{|z_n(x)-z_n(y)|^2}{|x-y|^{N+2s}}dxdy\Big)
 -\frac{\mu_2}{2}\Big(\int_{\R^N}|w_n|^2dx+\int_{\R^N}|z_n|^2dx\Big)\\
&~~-\frac{1}{2^*_\mu}\int_{\Omega}\int_{\Omega}\frac{|w^+_n(x)|^{2^*_\mu}|z^+_n(y)|^{2^*_\mu}}{|x-y|^{\mu}}dxdy+o_n(1).\\
\end{aligned}
\end{eqnarray}
From the boundedness of Palais-Smale sequences (see Lemma \ref{lemma4.7}) and compact embedding theorems, passing to a
subsequence if necessary, there exists $(u_0,v_0)\in Y(\Omega)$, such that $(u_n,v_n)\rightharpoonup (u_0,v_0)$
weakly in $Y(\Omega)$ as $n\rightarrow\infty$, $(u_n,v_n)\rightarrow (u_0,v_0)$ a.e. in $\Omega$ and strongly in
$L^r(\Omega)$ for $1\leq r<2^*_s$.
Since $|u_n^+|^{2^*_\mu}\rightharpoonup|u_0^+|^{2^*_\mu} ,|v_n^+|^{2^*_\mu}\rightharpoonup|v_0^+|^{2^*_\mu} \text{in }
L^{\frac{2N}{2N-\mu}}(\Omega) \text{ as  } n\rightarrow \infty$, by the Hardy-Littlewood-Sobolev inequality, the
Riesz potential defines a linear continuous map from $L^{\frac{2N}{2N-\mu}}(\Omega)$ to
$L^{\frac{2N}{\mu}}(\Omega)$, hence
\begin{equation*}
\begin{split}
&\int_{\Omega}\frac{|u_n^+(y)|^{2^*_\mu}}{|x-y|^{\mu}}dy\rightharpoonup \int_{\Omega}\frac{|u_0^+(y)|^{2^*_\mu}}{|x-y|^{\mu}}dy \text{ in } L^{\frac{2N}{\mu}}(\Omega),\\
&\int_{\Omega}\frac{|v_n^+(y)|^{2^*_\mu}}{|x-y|^{\mu}}dy\rightharpoonup \int_{\Omega}\frac{|v_0^+(y)|^{2^*_\mu}}{|x-y|^{\mu}}dy \text{ in } L^{\frac{2N}{\mu}}(\Omega).\\
\end{split}
\end{equation*}
as $n\rightarrow \infty $. Combining these with the fact that
\begin{equation*}
\begin{split}
&|u_n^+|^{2^*_\mu-1}\rightharpoonup|u_0^+|^{2^*_\mu-1} \text{  in }  L^{\frac{2N}{N+2s-\mu}}(\Omega),\\
&|v_n^+|^{2^*_\mu-1}\rightharpoonup|v_0^+|^{2^*_\mu-1} \text{  in } L^{\frac{2N}{N+2s-\mu}}(\Omega) .\\
\end{split}
\end{equation*}
as $n\rightarrow \infty $. we obtain
\begin{equation}\label{*}
\begin{split}
\int_{\Omega}\frac{|u_n^+(y)|^{2^*_\mu}|v_n^+(x)|^{2^*_\mu-1}}{|x-y|^{\mu}}dy\rightharpoonup \int_{\Omega}\frac{|u_0^+(y)|^{2^*_\mu}|v_0^+(x)|^{2^*_\mu-1}}{|x-y|^{\mu}}dy \text{  in } L^{\frac{2N}{N+2s}}(\Omega),\\
\int_{\Omega}\frac{|v_n^+(y)|^{2^*_\mu}|u_n^+(x)|^{2^*_\mu-1}}{|x-y|^{\mu}}dy\rightharpoonup \int_{\Omega}\frac{|v_0^+(y)|^{2^*_\mu}|u_0^+(x)|^{2^*_\mu-1}}{|x-y|^{\mu}}dy  \text{  in } L^{\frac{2N}{N+2s}}(\Omega), \\
\end{split}
\end{equation}
as  $n\rightarrow \infty $. Since, for any $\varphi,\psi\subset X(\Omega)$,
\begin{equation*}
\begin{split}
0\leftarrow&J'_s(u_n,v_n)(\varphi,\psi)\\=&\int_{\R^{2N}}\frac{(u_n(x)-u_n(y))(\varphi(x)-\varphi(y))+(v_n(x)-v_n(y))(\psi(x)-\psi(y))}{|x-y|^{N+2s}}dxdy\\
&-\int_{\Omega}(A(u_n,v_n),(\varphi,\psi))_{\R^2}dx-\int_{\Omega}\int_{\Omega}\frac{|v^+_n(y)|^{2^*_\mu}|u_n^+(x)|^{2^*_\mu-1}\varphi(x)}{|x-y|^\mu}dxdy
\\&-\int_{\Omega}\int_{\Omega}\frac{|u^+_n(y)|^{2^*_\mu}|v_n^+(x)|^{2^*_\mu-1}\psi(x)}{|x-y|^\mu}dxdy
\end{split}
\end{equation*}
Passing to the limit as $n\rightarrow\infty$, we obtain
\begin{equation}\label{4.16}
\begin{split}
&\int_{\R^{2N}}\frac{(u_0(x)-u_0(y))(\varphi(x)-\varphi(y))+(v_0(x)-v_0(y))(\psi(x)-\psi(y))}{|x-y|^{N+2s}}dxdy\\
&-\int_{\Omega}(A(u_0,v_0),(\varphi,\psi))_{\R^2}dx-\int_{\Omega}\int_{\Omega}\frac{|v^+_0(y)|^{2^*_\mu}|u_0^+(x)|^{2^*_\mu-1}\varphi(x)}{|x-y|^\mu}dxdy\\&-\int_{\Omega}\int_{\Omega}\frac{|u^+_0(y)|^{2^*_\mu}|v_0^+(x)|^{2^*_\mu-1}\psi(x)}{|x-y|^\mu}dxdy=0,
\end{split}
\end{equation}
which means that $(u_0,v_0)$ is a  weak solution of the problem \eqref{1.1}.\\
Taking $\varphi=u_0,\psi=v_0$ as a test function in equation \eqref{4.16}, we have
\begin{eqnarray*}
\begin{aligned}
&\int_{\R^{2N}}\frac{|u_0(x)-u_0(y)|^2+|v_0(x)-v_0(y)|^2}{|x-y|^{N+2s}}dxdy\\
&=\int_{\Omega}(A(u_0,v_0),(u_0,v_0))_{\R^2}dx+2\int_{\Omega}\int_{\Omega}\frac{|u^+_0(x)|^{2^*_\mu}|v^+_0(y)|^{2^*_\mu}}{|x-y|^{\mu}}dxdy,
\end{aligned}
\end{eqnarray*}
so for $0<\mu<N$,
\begin{eqnarray}\label{4.17}
\begin{aligned}
J_s(u_0,v_0)=\frac{N+2s-\mu}{2N-\mu}\int_{\Omega}\int_{\Omega}\frac{|u^+_0(x)|^{2^*_\mu}|v^+_0(y)|^{2^*_\mu}}{|x-y|^{\mu}}dxdy\geq0.
\end{aligned}
\end{eqnarray}
Using \eqref{4.14}, \eqref{4.17} and $\int_{\R^N}|w_n|^2dx\rightarrow0,\int_{\R^N}|z_n|^2dx\rightarrow0$, as
$n\rightarrow\infty$, we get
\begin{eqnarray}\label{4.18}
\begin{aligned}
c &\geq \frac{1}{2}\left(\int_{\R^{2N}}\frac{|w_n(x)-w_n(y)|^2}{|x-y|^{N+2s}}dxdy+\int_{\R^{2N}}\frac{|z_n(x)-z_n(y)|^2}{|x-y|^{N+2s}}dxdy\right)\\
&~~-\frac{1}{2^*_\mu}\int_{\Omega}\int_{\Omega}\frac{|w^+_n(x)|^{2^*_\mu}|z^+_n(y)|^{2^*_\mu}}{|x-y|^{\mu}}dxdy+o_n(1).\\
\end{aligned}
\end{eqnarray}
Since $(u_0,v_0)$ is a weak solution of \eqref{1.1}, $(u_0,v_0)$ must be critical point of $J_s$ which gives  \\
$\langle J'_s(u_0,v_0),(u_0,v_0)\rangle=0$, hence
\begin{eqnarray}\label{4.19}
\begin{aligned}
o_n(1)&=\langle J'_s(u_n,v_n),(u_n,v_n)\rangle\\
&=\langle
J'_s(u_0,v_0),(u_0,v_0)\rangle+ \int_{\R^{2N}}\frac{|w_n(x)-w_n(y)|^2}{|x-y|^{N+2s}}dxdy\\
&~~~+\int_{\R^{2N}}\frac{|z_n(x)-z_n(y)|^2}{|x-y|^{N+2s}}dxdy -2\int_{\Omega}\int_{\Omega}\frac{|w^+_n(x)|^{2^*_\mu}|z^+_n(y)|^{2^*_\mu}}{|x-y|^{\mu}}dxdy+o_n(1)\\
&=\int_{\R^{2N}}\frac{|w_n(x)-w_n(y)|^2}{|x-y|^{N+2s}}dxdy+\int_{\R^{2N}}\frac{|z_n(x)-z_n(y)|^2}{|x-y|^{N+2s}}dxdy\\
&~~~-2\int_{\Omega}\int_{\Omega}\frac{|w^+_n(x)|^{2^*_\mu}|z^+_n(y)|^{2^*_\mu}}{|x-y|^{\mu}}dxdy+o_n(1).
\end{aligned}
\end{eqnarray}
From \eqref{4.19}, we know there exists a nonnegative constant $m$ such that
\begin{eqnarray*}
\begin{aligned}
\int_{\R^{2N}}\frac{|w_n(x)-w_n(y)|^2}{|x-y|^{N+2s}}dxdy+\int_{\R^{2N}}\frac{|z_n(x)-z_n(y)|^2}{|x-y|^{N+2s}}dxdy\rightarrow m,
\end{aligned}
\end{eqnarray*}
and
\begin{eqnarray*}
\begin{aligned}
\int_{\Omega}\int_{\Omega}\frac{|w^+_n(x)|^{2^*_\mu}|z^+_n(y)|^{2^*_\mu}}{|x-y|^{\mu}}dxdy\rightarrow\frac{m}{2},
\end{aligned}
\end{eqnarray*}
as $n\rightarrow\infty$. Thus from \eqref{4.18}, we obtain
\begin{eqnarray}\label{4.20}
\begin{aligned}
c\geq\frac{N+2s-\mu}{4N-2\mu}m,
\end{aligned}
\end{eqnarray}
By the definition of the best constant $\widetilde{S}_s^H$, we have
\begin{eqnarray*}
\begin{aligned}
\int_{\R^{2N}}\frac{|u(x)-u(y)|^2+|v(x)-v(y)|^2}{|x-y|^{N+2s}}dxdy\geq
\widetilde{S}_s^H\Big(\int_{\Omega}\int_{\Omega}\frac{|u(x)|^{2^*_\mu}|v(y)|^{2^*_\mu}}{|x-y|^{\mu}}dxdy\Big)^{\frac{N-2s}{2N-\mu}},
\end{aligned}
\end{eqnarray*}
which yields $m\geq \widetilde{S}_s^H(\frac{m}{2})^{\frac{N-2s}{2N-\mu}}$. Thus  we have either $m=0$ or\\
$m\geq \frac{1}{2^{\frac{N-2s}{N+2s-\mu}}}(\widetilde{S}_s^H)^{\frac{2N-\mu}{N+2s-\mu}}$, If
$m\geq \frac{1}{2^{\frac{N-2s}{N+2s-\mu}}}(\widetilde{S}_s^H)^{\frac{2N-\mu}{N+2s-\mu}}$, by   Proposition \ref{proposition4.1} (iii), then we obtain from \eqref{4.20} that
\begin{eqnarray*}
\begin{aligned}
c\geq\frac{N+2s-\mu}{2N-\mu}(S_s^H)^{\frac{2N-\mu}{N+2s-\mu}},
\end{aligned}
\end{eqnarray*}
which contradicts
with the fact that $c<\frac{N+2s-\mu}{2N-\mu}(S_s^H)^{\frac{2N-\mu}{N+2s-\mu}}$. Thus $m=0$, and
\begin{eqnarray*}
\begin{aligned}
\|(u_n-u_0,v_n-v_0)\|_Y\rightarrow 0,
\end{aligned}
\end{eqnarray*}
as $n\rightarrow\infty$. This completes the proof of Lemma \ref{lemma4.8}.
\end{proof}
\subsection{Mountain Pass geometry}
\begin{lemma}\label{lemma4.9}
Suppose $\mu_2<\lambda_{1,s}$. The functional $J_s$ satisfies
\begin{enumroman}
\item \label{Proposition 4.9.a} There exist $\beta,\rho>0$ such that $J_s(u,v)\geq\beta$, if $\|(u,v)\|_Y=\rho$;
\item \label{Proposition 4.9.b} there exists $(e_1,e_2)\in Y(\Omega)\backslash \{(0,0)\}$ with
    $\|(e_1,e_2)\|_Y>\rho$ such that $J_s(e_1,e_2)\leq 0$.
\end{enumroman}
\end{lemma}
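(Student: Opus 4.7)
The plan is to verify the two mountain-pass geometric conditions using the coercivity provided by the assumption $\mu_2 < \lambda_{1,s}$ together with the super-quadratic nature of the Choquard-type nonlinearity.

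For part (i), I would first use the pointwise bound $(AU,U)_{\R^2} \leq \mu_2 |U|^2$ from \eqref{2.5} combined with the Poincaré-type inequality $\int_{\R^N}|u|^2\,dx \leq \lambda_{1,s}^{-1}[u]_s^2$ to estimate the quadratic part from below:
\[
\frac{1}{2}\|(u,v)\|_Y^2 - \frac{1}{2}\int_\Omega (A(u,v),(u,v))_{\R^2}\,dx \;\geq\; \frac{1}{2}\min\Bigl\{1,\,1-\tfrac{\mu_2}{\lambda_{1,s}}\Bigr\}\|(u,v)\|_Y^2.
\]
For the nonlocal term I would invoke Proposition \ref{proposition2.3} (with $p=2^*_\mu$) to separate it into two diagonal Riesz integrals, and then apply the Hardy-Littlewood-Sobolev inequality (Proposition \ref{Proposition 2.1}) together with the continuous embedding $X(\Omega)\hookrightarrow L^{2^*_s}(\Omega)$, giving
\[
\int_\Omega\!\!\int_\Omega \frac{|u^+(x)|^{2^*_\mu}|v^+(y)|^{2^*_\mu}}{|x-y|^\mu}\,dx\,dy \;\leq\; C\,\|u\|_X^{2^*_\mu}\|v\|_X^{2^*_\mu} \;\leq\; C\,\|(u,v)\|_Y^{2\cdot 2^*_\mu}.
\]
Combining the two estimates yields $J_s(u,v) \geq c_1\rho^2 - c_2\rho^{2\cdot 2^*_\mu}$ whenever $\|(u,v)\|_Y = \rho$, and since $2\cdot 2^*_\mu > 2$ this quantity is bounded below by some $\beta>0$ for $\rho$ chosen sufficiently small.

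For part (ii), I would fix any pair $(\varphi,\psi)\in Y(\Omega)$ with $\varphi^+\not\equiv 0$ and $\psi^+\not\equiv 0$ (the simplest choice being $\varphi=\psi=\varphi_{1,s}$, the positive first eigenfunction), and evaluate along the ray $t(\varphi,\psi)$:
\[
J_s(t\varphi,t\psi) = \frac{t^2}{2}\Bigl[\|(\varphi,\psi)\|_Y^2 - \int_\Omega (A(\varphi,\psi),(\varphi,\psi))_{\R^2}\,dx\Bigr] - \frac{t^{2\cdot 2^*_\mu}}{2^*_\mu}\int_\Omega\!\!\int_\Omega\frac{|\varphi^+(x)|^{2^*_\mu}|\psi^+(y)|^{2^*_\mu}}{|x-y|^\mu}\,dx\,dy.
\]
Because the double integral is strictly positive by the choice of $(\varphi,\psi)$ and $2\cdot 2^*_\mu > 2$, we have $J_s(t\varphi,t\psi)\to -\infty$ as $t\to\infty$. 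Taking $(e_1,e_2) := (t_0\varphi,t_0\psi)$ with $t_0$ large enough furnishes a point with $\|(e_1,e_2)\|_Y > \rho$ and $J_s(e_1,e_2)\leq 0$.

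The argument is essentially routine; the only subtlety is the presence of the positive parts $u^+,v^+$ in the nonlinearity, which forces us in (ii) to select test functions whose positive parts are both nontrivial. Choosing both components proportional to $\varphi_{1,s}$ (which is a.e. positive by property \ref{(1)}) completely avoids this issue, so no real obstacle arises.
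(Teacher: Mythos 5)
Your proposal is correct and follows essentially the same route as the paper. The only cosmetic difference is in part (i): the paper bounds the nonlocal term in one line by invoking the best constant $\widetilde{S}_s^H$ (equation \eqref{4.21}), whereas you reconstruct the same bound from scratch via Proposition \ref{proposition2.3}, the Hardy-Littlewood-Sobolev inequality, and the Sobolev embedding — which is exactly how $\widetilde{S}_s^H$ is shown to be finite in the first place, so the two arguments coincide in substance. Your choice $\varphi=\psi=\varphi_{1,s}$ in part (ii) is a concrete instance of the paper's abstract choice of a nonnegative pair with nontrivial product, and your remark about the necessity of nontrivial positive parts is exactly the point the paper's construction addresses.
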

\begin{proof}
(i) From  the definition of $\widetilde{S}_s^H$, we get
\begin{equation}\label{4.21}
\begin{split}
\int_{\Omega}\int_{\Omega}\frac{|u^+(x)|^{2^*_\mu}|v^+(y)|^{2^*_\mu}}{|x-y|^{\mu}}dxdy
&\leq \frac{1}{(\widetilde{S}^H_s)^{2^*_\mu}}\|(u,v)\|_Y^{2\cdot 2^*_\mu}.\\
\end{split}
\end{equation}
Combining with \eqref{2.5} and the definition of $\lambda_{1,s}$, we get
\begin{equation*}
\begin{split}
J_s(u,v)\geq\frac{1}{2}(1-\frac{\mu_2}{\lambda_{1,s}})\|(u,v)\|_Y^2-\frac{1}{2^*_\mu(\widetilde{S}^H_s)^{2^*_\mu}}\|(u,v)\|_Y^{2\cdot 2^*_\mu}.
\end{split}
\end{equation*}
Since $2<2\cdot2^*_\mu$ and thus, some $\beta, \rho>0$ can be chosen such that $J_s(u,v)\geq\beta$ for $\|(u,v)\|_Y=\rho$. \\
(ii) Choose$(\widetilde{u_0},\widetilde{v_0})\in Y(\Omega)\backslash\{(0,0)\}$ with $\widetilde{u_0}\geq
0,\widetilde{v_0}\geq0$ a.e. and $\widetilde{u_0}\widetilde{v_0}\neq 0$. Then
\begin{equation*}
\begin{split}
J_s(t\widetilde{u_0},t\widetilde{v_0})&=\frac{t^2}{2}\int_{\R^{2N}}\frac{|\widetilde{u_0}(x)-\widetilde{u_0}(y)|^2+|\widetilde{v_0}(x)-\widetilde{v_0}(y)|^2}{|x-y|^{N+2s}}dxdy\\
&~~~-\frac{t^2}{2}\int_{\R^N}(A(\widetilde{u_0},\widetilde{v_0}),(\widetilde{u_0},\widetilde{v_0}))dx-\frac{t^{2\cdot2^*_\mu}}{2^*_\mu}\int_{\Omega}\int_{\Omega}\frac{|\widetilde{u_0}(x)|^{2^*_\mu}|\widetilde{v_0}(y)|^{2^*_\mu}}{|x-y|^{\mu}}dxdy,\\
\end{split}
\end{equation*}
by choosing $t>0$  sufficiently large, the assertion follows. This concludes the proof.
\end{proof}




\begin{lemma}\label{lemma4.10}
If $(u,v)\subset Y(\Omega)$ is a critical point of $J_s$, then $(u^-, v^-)=(0,0)$.
\end{lemma}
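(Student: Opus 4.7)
The plan is to test the Euler--Lagrange equation $J_s'(u,v)=0$ against the pair $(u^-,v^-)\in Y(\Omega)$ and then exploit the sign assumptions $b\ge 0$ and $\mu_2<\lambda_{1,s}$ to force $U^-=(u^-,v^-)$ to vanish. Crucially, the nonlinearity in $J_s$ involves only $u^+$ and $v^+$, so the four integrals coming from the Choquard-type terms in \eqref{4.13}, evaluated at $(\varphi,\psi)=(u^-,v^-)$, vanish identically because $u^+(x)u^-(x)=v^+(x)v^-(x)=0$ a.e.~in $\Omega$.

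The first step is the pointwise identity
\begin{equation*}
(u(x)-u(y))(u^-(x)-u^-(y))=(u^-(x)-u^-(y))^2-\bigl(u^+(x)u^-(y)+u^+(y)u^-(x)\bigr),
\end{equation*}
together with the analogous one for $v$. Substituting these into $J_s'(u,v)(u^-,v^-)=0$ and using the decomposition $(AU,U^-)_{\R^2}=(AU^-,U^-)_{\R^2}+b(v^+u^-+u^+v^-)+a\,u^+u^-+c\,v^+v^-$ (where the last two terms are zero a.e.) I obtain
\begin{equation*}
\|U^-\|_Y^2=\int_{\R^{2N}}\frac{u^+(x)u^-(y)+u^+(y)u^-(x)}{|x-y|^{N+2s}}dxdy+\int_{\R^{2N}}\frac{v^+(x)v^-(y)+v^+(y)v^-(x)}{|x-y|^{N+2s}}dxdy+\int_\Omega(AU^-,U^-)_{\R^2}dx+b\int_\Omega(v^+u^-+u^+v^-)dx.
\end{equation*}

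The second step is sign analysis. Since $u^+\ge 0$ and $u^-\le 0$ (and similarly for $v$), the two double integrals on the right are $\le 0$, and since $b\ge 0$ the last term is also $\le 0$. Therefore
\begin{equation*}
\|U^-\|_Y^2\le \int_\Omega(AU^-,U^-)_{\R^2}dx\le \mu_2\int_\Omega |U^-|^2\,dx=\mu_2\bigl(\|u^-\|_{L^2}^2+\|v^-\|_{L^2}^2\bigr),
\end{equation*}
using the upper bound in \eqref{2.5}.

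Finally, the variational characterization of $\lambda_{1,s}$ applied separately to $u^-,v^-\in X(\Omega)$ yields $\|u^-\|_X^2\ge\lambda_{1,s}\|u^-\|_{L^2}^2$ and $\|v^-\|_X^2\ge\lambda_{1,s}\|v^-\|_{L^2}^2$, hence $\|U^-\|_Y^2\ge \lambda_{1,s}(\|u^-\|_{L^2}^2+\|v^-\|_{L^2}^2)$. Combining with the previous chain gives $(\lambda_{1,s}-\mu_2)(\|u^-\|_{L^2}^2+\|v^-\|_{L^2}^2)\le 0$, and since $\mu_2<\lambda_{1,s}$ by hypothesis we conclude $u^-=v^-=0$. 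The only mildly delicate point is the clean algebraic splitting of the mixed bilinear form $(AU,U^-)$ and the sign of the cross-term controlled by $b\ge 0$; once those are in place the argument is routine.
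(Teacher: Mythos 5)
Your proof is correct and follows essentially the same route as the paper's: test $J_s'(u,v)=0$ against $(u^-,v^-)$, observe the Choquard terms vanish because $u^+u^-=v^+v^-=0$ a.e., decompose the bilinear form $(AU,U^-)$ to isolate the $b$-cross-term, and close via the Rayleigh quotient characterization of $\lambda_{1,s}$. The only cosmetic difference is that where the paper invokes the inequality $(w_1-w_2)(w_1^--w_2^-)\ge (w_1^--w_2^-)^2$, you instead expand it to the exact identity $(u(x)-u(y))(u^-(x)-u^-(y))=(u^-(x)-u^-(y))^2-\bigl(u^+(x)u^-(y)+u^+(y)u^-(x)\bigr)$ and then discard the non-positive remainder; both are the same sign analysis. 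One minor slip: you speak of ``four integrals'' coming from the Choquard terms, but the Case~2 derivative \eqref{4.13} with $\xi_1=\xi_2=0$ has only two such integrals (the four-term version appears only in the general Section~2 derivative and in Case~3); this does not affect the argument since in all cases the relevant products $|u^+|^{2^*_\mu-1}u^-$ and $|v^+|^{2^*_\mu-1}v^-$ vanish pointwise.
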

\begin{proof}
By choosing $\varphi:=u^-\in X(\Omega)$ and $\psi:=v^-\in X(\Omega)$ as test functions in \eqref{4.13} and
using the elementary inequality
\begin{equation*}
(w_1-w_2)(w_1^{-}-w_2^{-})\geq (w_1^--w_2^-)^2, \text{ for all } w_1,w_2\in \R,
\end{equation*}
we obtain
\begin{equation*}
\begin{split}
& \int_{\R^{2N}}\frac{(u(x)-u(y))(u^-(x)-u^-(y))+(v(x)-v(y))(v^-(x)-v^-(y))}{|x-y|^{N+2s}}dxdy\\
& \geq \int_{\R^{2N}}\frac{(u^-(x)-u^-(y))^2+(v^-(x)-v^-(y))^2}{|x-y|^{N+2s}}dxdy.\\
\end{split}
\end{equation*}
Now, note that, since $b\geq 0$ and $w^-\leq 0$ and $w^+\geq 0$, it holds
\begin{equation*}
\int_{\R^N}(A(u,v),(u^-,v^-))_{\R^2}dx\leq \int_{\R^N}(A(u^-,v^-),(u^-,v^-))_{\R^2}dx.
\end{equation*}
In fact, it follows
\begin{equation*}
\begin{split}
(A(u,v),(u^-,v^-))_{\R^2}&=(A(u^-,v^-),(u^-,v^-))_{\R^2}+b(v^+u^-+u^+v^-),\\
&\leq (A(u^-,v^-),(u^-,v^-))_{\R^2}.
\end{split}
\end{equation*}
In turn, from the formula for $J'_s(u,v)(u^-,v^-)$, it follows that
\begin{equation*}
\begin{split}
J'_s(u,v)(u^-,v^-) \geq & \int_{\R^{2N}}\frac{(u^-(x)-u^-(y))^2+(v^-(x)-v^-(y))^2}{|x-y|^{N+2s}}dxdy\\
&-\int_{\Omega}(A(u^-,v^-),(u^-,v^-))_{\R^2} dx\geq I(u^-)+I(v^-),
\end{split}
\end{equation*}
where we have set
\begin{eqnarray*}
\begin{aligned}
I(w):=\int_{\R^{2N}}\frac{(w(x)-w(y))^2}{|x-y|^{N+2s}}dxdy-\mu_2\int_{\Omega}|w|^2dx=\|w\|^2_X-\mu_2\|w\|^2_{L^2(\Omega)}.
\end{aligned}
\end{eqnarray*}
On the other hand, by definition of $\lambda_{1,s}$, we have
\begin{equation*}
\begin{split}
I(w)\geq ( 1-\frac{\mu_2}{\lambda_{1,s}})\|w\|^2_X,
\end{split}
\end{equation*}
which finally yields the inequality
\begin{equation*}
\begin{split}
J'_s(u,v)(u^-,v^-)\geq (1-\frac{\mu_2}{\lambda_{1,s}})(\|u^-\|^2_X+\|v^-\|^2_X).
\end{split}
\end{equation*}
Since $\{(u,v)\}\subset Y(\Omega)$ is a critical point of $J_s$, we get $J'_s(u,v)(u^-,v^-)=0$, from which that assertion immediately follows.
\end{proof}

\noindent\textbf{Proof of Theorem 1.3}~By Lemma \ref{lemma4.9} and the Mountain Pass Theorem, there exists a sequence
$\{(u_n,v_n)\}\subset Y(\Omega)$, so called $(PS)_c$-Palais Smale sequence at level $c$, such that
\begin{equation}\label{4.22}
\begin{split}
J_s(u_n,v_n)\rightarrow c, J'_s(u_n,v_n)\rightarrow 0,
\end{split}
\end{equation}
where $c$ is given by
\begin{equation*}
\begin{split}
c=\inf_{\gamma\in \Gamma}\max_{t\in [0,1]}J_s(\gamma(t)),
\end{split}
\end{equation*}
with
\begin{equation*}
\begin{split}
\Gamma=\{\gamma \in C([0,1],Y(\Omega)):\gamma(0)=(0,0) \text{ and } J_s(\gamma(1))\leq 0\}.
\end{split}
\end{equation*}
From \eqref{2.5}, we know that there exists $u_\epsilon$ such that
\begin{eqnarray*}
\begin{aligned}
J_s(tu_\epsilon,tu_\epsilon)&\leq
t^2\|u_\epsilon\|^2-\mu_1t^2\|u_\epsilon\|^2_{L^2}-\frac{t^{2\cdot2^*_\mu}}{2^*_\mu}\int_{\Omega}\int_{\Omega}\frac{|u_\epsilon(x)|^{2^*_\mu}|u_\epsilon(y)|^{2^*_\mu}}{|x-y|^{\mu}}dxdy\\
&:=f(t).
\end{aligned}
\end{eqnarray*}
It is easy  to verify that $f(t)$ attains its maximum at
$t_*=\left[\frac{\|u_\epsilon\|^2-\mu_1\|u_\epsilon\|^2_{L^2}}{\int_{\Omega}\int_{\Omega}\frac{|u_\epsilon(x)|^{2^*_\mu}|u_\epsilon(y)|^{2^*_\mu}}{|x-y|^{\mu}}dxdy}\right]^{\frac{1}{2\cdot2^*_\mu-2}}$.
By the definition of $S_{s,\lambda}(v)$ and Lemma \ref{lemma4.5}, we have
\begin{eqnarray*}
\begin{aligned}
c&\leq \sup_{t\geq0}J_s(tu_\epsilon,tu_\epsilon)\\
&\leq f(t_*)=\frac{N+2s-\mu}{2N-\mu}\left[\frac{(\|u_\epsilon\|^2-\mu_1\|u_\epsilon\|^2_{L^2})}{(\int_{\Omega}\int_{\Omega}\frac{|u_\epsilon(x)|^{2^*_\mu}|u_\epsilon(y)|^{2^*_\mu}}{|x-y|^{\mu}}dxdy)^{\frac{1}{2^*_\mu}}}\right]^{\frac{2N-\mu}{N+2s-\mu}}\\
&=\frac{N+2s-\mu}{2N-\mu}(S_{s,\mu_1}(u_\epsilon))^{\frac{2N-\mu}{N+2s-\mu}}<\frac{N+2s-\mu}{2N-\mu}(S_s^H)^{\frac{2N-\mu}{N+2s-\mu}}.
\end{aligned}
\end{eqnarray*}
If one of the following conditions holds,
\begin{enumroman}
\item \label{Theorem 1.2.(1)} $N\geq 4s$ and $\mu_1>0$, or
\item \label{Theorem 1.2.(2)}  $2s<N<4s$ and $\mu_1$ is large enough.
\end{enumroman}
Therefore, combining with Lemma \ref{lemma4.8} and Lemma \ref{lemma4.10},  we get that problem \eqref{1.1} has a nonnegative solution with critical value $c\in (0, \frac{N+2s-\mu}{2N-\mu}(S_s^H)^{\frac{2N-\mu}{N+2s-\mu}})$.


\section{$ Case~3: \xi_1,\xi_2>0,$ $p=q=2^*_\mu$}
In this case ,  we have the  function $J_s:Y(\Omega)\rightarrow \R$ by setting
\begin{eqnarray*}
\begin{aligned}
J_s(U)\equiv J_s(u,v) = &\frac{1}{2}\left(\int_{\R^{2N}}\frac{|u(x)-u(y)^2+|v(x)-v(y)|^2}{|x-y|^{N+2s}}dxdy\right) \\
&-\frac{1}{2}\int_{\R^N}(A(u,v),(u,v))_{\R^2}dx-\frac{1}{2^*_\mu}\Big(\int_{\Omega}\int_{\Omega}\frac{|u(x)|^{2^*_\mu}|v(y)|^{2^*_\mu}}{|x-y|^\mu}dxdy\\
&+\xi_1 \int_{\Omega}\int_{\Omega}\frac{|u(x)|^{2^*_\mu}|u(y)|^{2^*_\mu}}{|x-y|^\mu}dxdy+\xi_2\int_{\Omega}\int_{\Omega}\frac{|v(x)|^{2^*_\mu}|v(y)|^{2^*_\mu}}{|x-y|^\mu}dxdy\Big),
\end{aligned}
\end{eqnarray*}
whose Fr\'{e}chet derivative is given by
\begin{eqnarray*}
\begin{aligned}
J'_s(u,v)(\varphi,\psi)=&\int_{\R^{2N}}\frac{(u(x)-u(y))(\varphi(x)-\varphi(y))+(v(x)-v(y))(\psi(x)-\psi(y))}{|x-y|^{N+2s}}dxdy\\
&-\int_{\Omega}(A(u,v),(\varphi,\psi))_{\R^2}dx-\int_{\Omega}\int_{\Omega}\frac{|u(x)|^{2^*_\mu-2}u(x)|v(y)|^{2^*_\mu}}{|x-y|^\mu}\varphi dxdy
\\&-\int_{\Omega}\int_{\Omega}\frac{|u(x)|^{2^*_\mu}|v(y)|^{2^*_\mu-2}v(y)}{|x-y|^\mu}\psi dxdy-2\xi_1\int_{\Omega}\int_{\Omega}\frac{|u(x)|^{2^*_\mu-2}u(x)|u(y)|^{2^*_\mu}}{|x-y|^\mu}\varphi dxdy\\
&-2\xi_2\int_{\Omega}\int_{\Omega}\frac{|v(x)|^{2^*_\mu}|v(y)|^{2^*_\mu-2}v(y)}{|x-y|^\mu}\psi dxdy,\\
\end{aligned}
\end{eqnarray*}
for every $(\varphi,\psi)\in Y(\Omega)$.
Meanwhile,
\begin{equation}\label{5.1}
F(u,v)=\frac{1}{2^*_\mu}\left[\int_{\Omega}\frac{|v(y)|^{2^*_\mu}}{|x-y|^\mu}dy|u|^{2^*_\mu}+\xi_1 \int_{\Omega}\frac{|u(y)|^{2^*_\mu}}{|x-y|^\mu}dy|u|^{2^*_\mu}+\xi_2\int_{\Omega}\frac{|v(y)|^{2^*_\mu}}{|x-y|^\mu}dy|v|^{2^*_\mu}\right].
\end{equation}
\subsection{Minimizers}
For notational convenience, if $(u,v)\in Y(\Omega)$ we set
\begin{equation*}
B(u,v):=\int_{\Omega}\int_{\Omega}\frac{|u(x)|^{2^*_\mu}|v(y)|^{2^*_\mu}}{|x-y|^\mu}dxdy.
\end{equation*}
and let
\begin{eqnarray}\label{5.2}
\begin{aligned}
\widetilde{S}^H_\xi=\inf_{(u,v)\in
Y(\Omega)\backslash\{(0,0)\}}\frac{\int_{\R^{2N}}\frac{|u(x)-u(y)|^2+|v(x)-v(y)|^2}{|x-y|^{N+2s}}dxdy}{\left(B(u,v)
+\xi_1B(u,u) +\xi_2B(v,v)\right)^{\frac{1}{2^*_\mu}}}.
\end{aligned}
\end{eqnarray}

\begin{Remark}\label{remark5.1}
Let  $T(u,v):=|u|^{2^*_\mu}|v|^{2^*_\mu}+\xi_1|u|^{2\cdot2^*_\mu}+\xi_2|v|^{2\cdot2^*_\mu}$.
It is clear that  $T(u,v)^{\frac{1}{2^*_\mu}}$ is 2-homogeneous, i.e.
\begin{eqnarray*}
\begin{aligned}
T(\varpi U )=\varpi^{2\cdot 2^*_\mu}T( U ), \forall U\in \R^2, \forall \varpi\geq 0.
\end{aligned}
\end{eqnarray*}
there exists a constant
$M > 0$ satisfying
\begin{eqnarray}\label{5.3}
\begin{aligned}
T(u,v)^{\frac{1}{2^*_\mu}}\leq M(|u|^2+|v|^2), \text{ for all } u,v \in \R.
\end{aligned}
\end{eqnarray}
where $M$ is the maximum of the function $T(u,v)^{\frac{1}{2^*_\mu}}$ attained in some $(s_0,t_0)$ of the
compact set $\{(s,t):s,t \in \R, |s|^2+|t|^2=2\}$.
Let $m=M^{-1}$, so we have that
\begin{eqnarray}\label{5.4}
\begin{aligned}
T(s_0,t_0)^{\frac{1}{2^*_\mu}}=m^{-1}(s_0^2+t_0^2).
\end{aligned}
\end{eqnarray}
\end{Remark}

The following result shows the relation between $S^H_s$ and $\widetilde{S}^H_\xi$. The proof is similar to  \cite[Lemma 2.3]{MRspectrum}.
\begin{lemma}\label{lemma5.2}
Let $\Omega$ be a smooth bounded domain, then
\begin{eqnarray}\label{5.5}
\begin{aligned}
\widetilde{S}^H_\xi=mS^H_s.
\end{aligned}
\end{eqnarray}
Moreover, if $g_0$ realizes $S^H_s$ then $(s_0g_0,t_0g_0)$  realizes $\widetilde{S}^H_\xi$, for some
$s_0,t_0>0$.
\end{lemma}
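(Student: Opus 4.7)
The plan is to establish $\widetilde{S}^H_\xi \le m S^H_s$ and $\widetilde{S}^H_\xi \ge m S^H_s$ separately. The test-function computation that yields the upper bound will simultaneously exhibit a minimizer of $\widetilde{S}^H_\xi$ of the form $(s_0 g_0, t_0 g_0)$ whenever $g_0$ minimizes $S^H_s$, so the second assertion of the lemma comes for free from the upper-bound direction.

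For the upper bound, I would test the Rayleigh quotient in \eqref{5.2} with pairs $(sg, tg)$ for arbitrary $g \in X(\Omega)\setminus\{0\}$ and $(s,t) \neq (0,0)$. Using $[sg]_s^2 = s^2 [g]_s^2$ together with the scalings $B(sg,tg) = s^{2^*_\mu}t^{2^*_\mu}B(g,g)$, $B(sg,sg) = s^{2\cdot 2^*_\mu}B(g,g)$, and $B(tg,tg) = t^{2\cdot 2^*_\mu}B(g,g)$, the ratio becomes
\[
\frac{(s^2+t^2)\,[g]_s^2}{T(s,t)^{1/2^*_\mu}\, B(g,g)^{1/2^*_\mu}}.
\]
Choosing $(s,t) = (s_0,t_0)$ as in Remark \ref{remark5.1} and invoking \eqref{5.4} gives $T(s_0,t_0)^{1/2^*_\mu} = m^{-1}(s_0^2+t_0^2)$, so the ratio collapses to $m\, [g]_s^2 / B(g,g)^{1/2^*_\mu}$. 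Taking the infimum over $g$ produces $\widetilde{S}^H_\xi \le m S^H_s$, and when $g = g_0$ is a minimizer of $S^H_s$ the entire chain becomes an equality, so $(s_0 g_0, t_0 g_0)$ minimizes $\widetilde{S}^H_\xi$.

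For the lower bound, I fix an arbitrary $(u,v)\in Y(\Omega)\setminus\{(0,0)\}$ and set $a := B(u,u)^{1/(2\cdot 2^*_\mu)}$ and $b := B(v,v)^{1/(2\cdot 2^*_\mu)}$. Applying the scalar sharp inequality $[w]_s^2 \ge S^H_s\, B(w,w)^{1/2^*_\mu}$ to each component separately yields $[u]_s^2 + [v]_s^2 \ge S^H_s (a^2+b^2)$. On the denominator side, Proposition \ref{proposition2.3} supplies $B(u,v) \le B(u,u)^{1/2}B(v,v)^{1/2} = a^{2^*_\mu}b^{2^*_\mu}$, so
\[
B(u,v)+\xi_1 B(u,u)+\xi_2 B(v,v) \le a^{2^*_\mu}b^{2^*_\mu}+\xi_1 a^{2\cdot 2^*_\mu}+\xi_2 b^{2\cdot 2^*_\mu} = T(a,b).
\]
Inequality \eqref{5.3} then gives $T(a,b)^{1/2^*_\mu} \le M(a^2+b^2) = m^{-1}(a^2+b^2)$, and chaining the estimates produces $[u]_s^2 + [v]_s^2 \ge m S^H_s\bigl(B(u,v)+\xi_1 B(u,u)+\xi_2 B(v,v)\bigr)^{1/2^*_\mu}$, i.e.\ $\widetilde{S}^H_\xi \ge m S^H_s$.

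The whole argument is essentially bookkeeping, resting on the two-homogeneity reduction encoded in Remark \ref{remark5.1}, the Choquard--Minkowski estimate of Proposition \ref{proposition2.3}, and the scalar sharp constant $S^H_s$. There is no genuine technical obstacle; the only point demanding mild attention is keeping the scaling exponents aligned so that both the numerator and the denominator of each Rayleigh quotient are two-homogeneous, and the positivity of $(s_0,t_0)$ is inherited directly from Remark \ref{remark5.1}.
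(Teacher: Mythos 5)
Your proposal is correct and follows essentially the same strategy as the paper's proof: test the coupled Rayleigh quotient with $(s_0 g, t_0 g)$ and use \eqref{5.4} for the upper bound, then combine Proposition \ref{proposition2.3}, the scalar sharp constant $S^H_s$ applied componentwise, and \eqref{5.3} for the lower bound. Your bookkeeping with $a=B(u,u)^{1/(2\cdot 2^*_\mu)}$ and $b=B(v,v)^{1/(2\cdot 2^*_\mu)}$ is in fact a cleaner way to express the normalization that the paper encodes via the somewhat loosely worded substitution ``$u_n = r_n v_n$'', and you make explicit the equality chain certifying that $(s_0 g_0, t_0 g_0)$ realizes $\widetilde{S}^H_\xi$.
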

\begin{proof}
Let $\{g_n\}\subset X(\Omega)\backslash \{0\}$ be a minimizing sequence for $S^H_s $ and  consider the sequence
$(\widetilde{u_n},\widetilde{v_n})=(s_0g_n,t_0g_n)$. Substituting $(\widetilde{u_n},\widetilde{v_n})$ in
quotient \eqref{5.2}, we get
\begin{eqnarray}\label{5.6}
\begin{aligned}
\frac{(s_0^2+t_0^2)\|g_n\|^2}{(s_0^{2^*_\mu}t_0^{2^*_\mu}+\xi_1s_0^{2\cdot2^*_\mu}+\xi_2t_0^{2\cdot2^*_\mu})^{\frac{1}{2^*_\mu}}B(g_n,g_n)^{\frac{1}{2^*_\mu}}}\geq
\widetilde{S}^H_\xi.
\end{aligned}
\end{eqnarray}
and consequently by \eqref{5.4} follows that
\begin{eqnarray}\label{5.7}
\begin{aligned}
m\frac{\|g_n\|^2}{B(g_n,g_n)^{\frac{1}{2^*_\mu}}}\geq \widetilde{S}^H_\xi.
\end{aligned}
\end{eqnarray}
Taking the limit in \eqref{5.7}, we obtain
\begin{eqnarray*}
\begin{aligned}
mS^H_s\geq \widetilde{S}^H_\xi.
\end{aligned}
\end{eqnarray*}
In order to prove the reversed inequality, let $\{(u_n,v_n)\}$ be a minimizing sequence for
$\widetilde{S}^H_\xi$. We set $u_n=r_nv_n$ for $r_n>0$. By Proposition \ref{proposition2.3}, we obtain
\begin{equation}\label{5.8}
\frac{\|(u_n,v_n)\|^2}{\left(B(u_n,v_n) +\xi_1B(u_n,u_n) +\xi_2B(v_n,v_n)\right)^{\frac{1}{2^*_\mu}}}\geq
\frac{(1+\frac{1}{r_n^2})S^H_s}{\left(\frac{1}{r^{2^*_\mu}_n}+\xi_1+\xi_2\frac{1}{r^{2\cdot2^*_\mu}_n}\right)^{\frac{1}{2^*_\mu}}}.
\end{equation}
Now, by inequality \eqref{5.3}, we obtain
\begin{equation}\label{5.9}
m\left(\frac{1}{r^{2^*_\mu}_n}+\xi_1+\xi_2\frac{1}{r^{2\cdot2^*_\mu}_n}\right)^{\frac{1}{2^*_\mu}}\leq
1+\frac{1}{r_n^2}.
\end{equation}
Hence, using the inequalities \eqref{5.8} and \eqref{5.9}, we have
\begin{eqnarray*}
\begin{aligned}
\frac{\|(u_n,v_n)\|^2}{\left(B(u_n,v_n) +\xi_1B(u_n,u_n) +\xi_2B(v_n,v_n)\right)^{\frac{1}{2^*_\mu}}}\geq
mS^H_s.
\end{aligned}
\end{eqnarray*}
Therefore, passing to the limit in the above inequality, we have the desired reversed inequality.
\end{proof}

\subsection{Compactness convergence}

\begin{lemma}\label{lemma5.3}(Boundedness)
The $(PS)_c$  sequence $\{(u_n,v_n)\}\subset Y(\Omega)$ is bounded.
\end{lemma}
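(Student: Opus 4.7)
I would proceed by contradiction: suppose $t_n := \|(u_n, v_n)\|_Y \to \infty$, and set $(w_n, z_n) := (u_n, v_n)/t_n$, so that $\|(w_n,z_n)\|_Y = 1$. The goal is to show that $(w_n, z_n)$ has a weak limit that must vanish identically, while the rescaled quadratic identity simultaneously forces this limit to be nontrivial.

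First I would combine $J_s(u_n, v_n) \to c$ with $\langle J'_s(u_n, v_n), (u_n, v_n)\rangle = o(t_n)$. Subtracting $\tfrac{1}{2 \cdot 2^*_\mu}$ times the second relation from the first eliminates the nonlinear terms and gives
\begin{equation*}
\frac{2^*_\mu - 1}{2 \cdot 2^*_\mu}\left[\|(u_n, v_n)\|_Y^2 - \int_\Omega (A(u_n, v_n), (u_n, v_n))_{\R^2}\, dx\right] = c + o(1) + o(t_n),
\end{equation*}
so after dividing by $t_n^2$ one arrives at
\begin{equation*}
1 - \int_\Omega (A(w_n, z_n),(w_n, z_n))_{\R^2}\, dx \leq o(1). \quad (\star)
\end{equation*}
Subtracting instead $\tfrac{1}{2}\langle J'_s(u_n,v_n),(u_n,v_n)\rangle$ from $J_s(u_n,v_n)$ eliminates the quadratic terms and yields
\begin{equation*}
\frac{2^*_\mu - 1}{2^*_\mu}\bigl[B(u_n,v_n) + \xi_1 B(u_n,u_n) + \xi_2 B(v_n, v_n)\bigr] = c + o(1) + o(t_n).
\end{equation*}
Dividing this by $t_n^{2 \cdot 2^*_\mu}$ and exploiting the $(2\cdot 2^*_\mu)$-homogeneity of each $B$-term, I obtain
\begin{equation*}
B(w_n, z_n) + \xi_1 B(w_n, w_n) + \xi_2 B(z_n, z_n) \longrightarrow 0.
\end{equation*}

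Passing to a subsequence, $(w_n, z_n) \rightharpoonup (w_0, z_0)$ weakly in $Y(\Omega)$, with pointwise a.e.\ convergence in $\Omega$ and strong convergence in $L^2(\Omega) \times L^2(\Omega)$ via the compact embedding $X(\Omega) \hookrightarrow L^2(\Omega)$. Since $\xi_1, \xi_2 > 0$ and the three quantities $B(w_n, w_n)$, $B(z_n, z_n)$, $B(w_n, z_n)$ are all nonnegative, each one individually tends to zero. Applying Fatou's lemma to the nonnegative integrand $\tfrac{|w_n(x)|^{2^*_\mu}|w_n(y)|^{2^*_\mu}}{|x-y|^\mu}$ yields $B(w_0, w_0) \leq \liminf_n B(w_n, w_n) = 0$; since the Riesz kernel is strictly positive this forces $w_0 \equiv 0$, and the analogous argument gives $z_0 \equiv 0$.

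Finally I would return to $(\star)$: from $(w_n, z_n) \to (0,0)$ strongly in $L^2 \times L^2$, one has $\int_\Omega (A(w_n,z_n),(w_n,z_n))_{\R^2}\, dx \to 0$, so $(\star)$ collapses to $1 \leq 0$, the desired contradiction. Hence $\{(u_n, v_n)\}$ is bounded in $Y(\Omega)$. The principal obstacle is that under the spectral hypothesis of Theorem~\ref{theorem1.4}, the quadratic form $Q(U) := \|U\|_Y^2 - \int_\Omega (AU, U)_{\R^2}\,dx$ is sign-indefinite and may even degenerate in resonant directions $\mu_2 = \lambda_{k,s}$, so the direct coercivity argument used in Lemma~\ref{lemma4.7} is unavailable; the proof circumvents this precisely by exploiting the strict positivity $\xi_1, \xi_2 > 0$ (the defining feature of Case~3) together with the super-quadratic homogeneity $2 \cdot 2^*_\mu > 2$ of the Hardy--Littlewood--Sobolev nonlinearities to annihilate both components of the normalized weak limit.
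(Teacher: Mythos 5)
Your proof is correct, and it takes a genuinely different route from the paper's. The paper argues directly: from the combination $J_s(U_n)-\tfrac12\langle J'_s(U_n),U_n\rangle$ it obtains $\int_\Omega F(U_n)\,dx \le C(1+\|U_n\|_Y)$; then, exploiting the positivity of all three $B$-terms (which genuinely uses $\xi_1,\xi_2>0$) together with the Hardy--Littlewood--Sobolev/Sobolev embedding, it controls $\|U_n\|_{L^2}^2 \le C\bigl(\int_\Omega F(U_n)\,dx\bigr)^{1/2^*_\mu}\le C(1+\|U_n\|_Y)^{1/2^*_\mu}$; finally it plugs these two bounds into the identity for $J_s(U_n)+\tfrac12\langle J'_s(U_n),U_n\rangle$ to arrive at $\|U_n\|_Y^2 \le C(1+\|U_n\|_Y)+C(1+\|U_n\|_Y)^{1/2^*_\mu}$, which forces boundedness since $2^*_\mu>1$. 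Your argument instead proceeds by contradiction and normalization: the superquadratic homogeneity $2\cdot 2^*_\mu>2$ kills the rescaled $B$-terms in the limit, then $\xi_1,\xi_2>0$ together with Fatou annihilates both components of the weak limit $(w_0,z_0)$, while the orthogonal combination of $J_s$ and $J'_s$ forces the normalized $L^2$-mass to survive (via the compact embedding $X(\Omega)\hookrightarrow L^2(\Omega)$), giving $1\le 0$. Both arguments hinge in an essential way on $\xi_1,\xi_2>0$, which is the defining feature of Case~3. Your approach is more conceptual and closer in spirit to rescaling/concentration arguments, and it makes transparent why the indefiniteness of $\|U\|_Y^2-\int_\Omega(AU,U)\,dx$ in the linking regime is harmless; the paper's approach is a more elementary chain of algebraic estimates that never passes to a weak limit. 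One small point of hygiene: from the exact identity you actually obtain $1-\int_\Omega(A(w_n,z_n),(w_n,z_n))_{\R^2}\,dx = o(1)$ rather than merely $\le o(1)$, but the one-sided version you record is all you need.
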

\begin{proof}
Let $U_n\in Y(\Omega)$ be a $(PS)_c$ sequence, we have
\begin{equation}\label{10}
\begin{split}
J_s(U_n)-\frac{1}{2}\langle J'_s(U_n),U_n\rangle=(2^*_\mu-1)\int_{\Omega}F(U_n)dx\leq \widetilde{C_1}(1+\|U_n\|_Y).
\end{split}
\end{equation}
for some positive constant $\widetilde{C_1}$. From \eqref{2.5}, we have
\begin{equation}\label{11}
\begin{split}
J_s(U_n)+\frac{1}{2}\langle J'_s(U_n),U_n\rangle&=\|U_n\|_Y^2-\int_{\Omega}(A(u,v),(u,v))_{\R^2}dx-(2^*_\mu+1)\int_{\Omega}F(U_n)dx\\
&\leq\|U_n\|_Y^2-\mu_1\|U_n\|^2_{L^2}-(2^*_\mu+1)\int_{\Omega}F(U_n)dx\\
&\leq \widetilde{C_2}(1+\|U_n\|_Y).
\end{split}
\end{equation}
for some positive constant $\widetilde{C_2}$.
Recalling that $2^*_s>2$, by H$\ddot{o}$lder inequality  and \cite[Lemma 2.2]{MRWangY}, we get
\begin{equation*}
\begin{split}
&\|u_n\|^2_{L^2}\leq |\Omega|^{\frac{2s}{N}}\|u_n\|^2_{L^{2^*_s}}\leq \widetilde{C_3}\left(\int_{\Omega}\int_{\Omega}\frac{|u_n(x)|^{2^*_\mu}|u_n(y)|^{2^*_\mu}}{|x-y|^{\mu}}dxdy\right)^{\frac{1}{2^*_\mu}},\\
&\|v_n\|^2_{L^2}\leq |\Omega|^{\frac{2s}{N}}\|v_n\|^2_{L^{2^*_s}}\leq \widetilde{C_4}\left(\int_{\Omega}\int_{\Omega}\frac{|v_n(x)|^{2^*_\mu}|v_n(y)|^{2^*_\mu}}{|x-y|^{\mu}}dxdy\right)^{\frac{1}{2^*_\mu}}.
\end{split}
\end{equation*}
for some positive constant $\widetilde{C_3}$ and $\widetilde{C_4}$.
Combining with \eqref{10}, we get
\begin{equation}\label{12}
\begin{split}
\|U_n\|^2_{L^2}&\leq \widetilde{C_5}\left(\left(\int_{\Omega}\int_{\Omega}\frac{|u_n(x)|^{2^*_\mu}|u_n(y)|^{2^*_\mu}}{|x-y|^{\mu}}dxdy\right)^{\frac{1}{2^*_\mu}}+\left(\int_{\Omega}\int_{\Omega}\frac{|v_n(x)|^{2^*_\mu}|v_n(y)|^{2^*_\mu}}{|x-y|^{\mu}}dxdy\right)^{\frac{1}{2^*_\mu}}\right)\\
&\leq \widetilde{C_5}\left(\int_{\Omega}F(U_n)dx\right)^{\frac{1}{2^*_\mu}}\leq \widetilde{C_6}(1+\|U_n\|_Y)^{\frac{1}{2^*_\mu}}.
\end{split}
\end{equation}
for some positive constant $\widetilde{C_5}$ and $\widetilde{C_6}$.
Hence, by \eqref{10}-\eqref{12}, we conclude that
\begin{equation*}
\begin{split}
\|U_n\|_Y^2 \leq \widetilde{C_7}(1+\|U_n\|_Y)+\widetilde{C_8}(1+\|U_n\|_Y)^{\frac{1}{2^*_\mu}}.
\end{split}
\end{equation*}
for some positive constant $\widetilde{C_7}$  and $\widetilde{C_8}$ . Therefore, we conclude that the sequence $\{U_n\}$ is bounded.\\
\end{proof}

Since $\{U_n\}$ is bounded in $ Y(\Omega)$, up to a subsequence, still denoted by $U_n$, there exists
$U=(u_0,v_0)\in Y(\Omega)$ such that
\begin{equation}\label{5.10}
\begin{split}
U_n\rightharpoonup U \text{ in }  Y(\Omega),
\end{split}
\end{equation}
\begin{equation}\label{5.11}
\begin{split}
U_n\rightharpoonup U \text{ in }  L^{2^*_s}(\Omega)\times L^{2^*_s}(\Omega),
\end{split}
\end{equation}
\begin{equation}\label{5.12}
\begin{split}
U_n\rightarrow U \text{ a.e in }  \Omega,
\end{split}
\end{equation}
\begin{equation}\label{5.13}
\begin{split}
U_n \rightarrow U \text{ in }  L^{r}(\Omega)\times L^{r}(\Omega), \text{ for all }r\in [1,2^*_s),
\end{split}
\end{equation}

\begin{lemma}\label{proposition5.4}
The following relations hold true:
\begin{enumroman}
\item \label{lemma 5.4.a} $J_s(U)=(2^*_\mu-1)\int_{\Omega} F(U)dx\geq 0$.
\item \label{lemma 5.4.b} $J_s(U_n)=J_s(U)+\frac{1}{2}\|U_n-U\|_Y^2-\int_{\Omega} F(U_n-U)dx+o(1)$.
\item \label{lemma 5.10.c} $\|U_n-U\|_Y^2=2\cdot2^*_\mu \int_{\Omega}F(U_n-U)dx+o(1)$.
\end{enumroman}
\end{lemma}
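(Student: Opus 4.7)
The plan is to establish the three identities by using, in order: first the fact that the weak limit $U$ is a critical point of $J_s$ (together with the homogeneity of $F$), then a three-term Br\'ezis-Lieb type decomposition, and finally a comparison of the linearized identity with $\langle J'_s(U_n),U_n\rangle = o(1)$.

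For (i), I would first verify that $J'_s(U)=0$. This amounts to passing to the limit in $J'_s(U_n)(\varphi,\psi)\to 0$ for every fixed test pair $(\varphi,\psi)\in Y(\Omega)$. The Gagliardo term passes by weak convergence in $Y(\Omega)$, the $A$-quadratic term by the strong $L^2\times L^2$ convergence coming from the compact embedding $X(\Omega)\hookrightarrow L^2(\Omega)$, and each of the three Choquard-type nonlinear terms by exactly the Hardy-Littlewood-Sobolev based weak convergence used to derive \eqref{*} in the proof of Lemma \ref{lemma4.8} (the Riesz convolution maps $L^{\frac{2N}{2N-\mu}}$ weakly into $L^{\frac{2N}{\mu}}$, which pairs against $|u_n|^{2^*_\mu-1}\rightharpoonup|u|^{2^*_\mu-1}$ in $L^{\frac{2N}{N+2s-\mu}}$). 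Testing the resulting identity $J'_s(U)=0$ with $(u,v)=U$ gives $\|U\|_Y^2-\int_\Omega (AU,U)_{\R^2}dx=2\cdot 2^*_\mu\int_\Omega F(U)dx$, by Euler's identity for the $(2\cdot 2^*_\mu)$-homogeneous function $F$. Substituting into the definition of $J_s(U)$ yields $J_s(U)=(2^*_\mu-1)\int_\Omega F(U)dx\geq 0$.

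For (ii), I would decompose each of the three ingredients of $J_s(U_n)$ separately. The Hilbertian identity $\|U_n\|_Y^2=\|U\|_Y^2+\|U_n-U\|_Y^2+o(1)$ comes from $U_n\rightharpoonup U$ in $Y(\Omega)$. The quadratic form behaves well on $L^2\times L^2$, and since $U_n-U\to 0$ strongly there, we get $\int_\Omega(AU_n,U_n)_{\R^2}dx=\int_\Omega(AU,U)_{\R^2}dx+o(1)$. Finally, Proposition \ref{proposition4.6} applied successively to the three summands that make up $F$ (namely $B(u_n,v_n)$, $\xi_1 B(u_n,u_n)$, $\xi_2 B(v_n,v_n)$) gives $\int_\Omega F(U_n)dx=\int_\Omega F(U)dx+\int_\Omega F(U_n-U)dx+o(1)$. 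Combining the three decompositions with the coefficients $\tfrac12,-\tfrac12,-1$ that appear in $J_s$ yields (ii) immediately.

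For (iii), I would start from $\langle J'_s(U_n),U_n\rangle=o(1)$, which is valid because $J'_s(U_n)\to 0$ in $Y(\Omega)^*$ and $\{U_n\}$ is bounded by Lemma \ref{lemma5.3}. Writing out this expression gives $\|U_n\|_Y^2-\int_\Omega(AU_n,U_n)_{\R^2}dx-2\cdot 2^*_\mu\int_\Omega F(U_n)dx=o(1)$. Substituting the three decompositions from (ii), and then subtracting the critical-point identity $\|U\|_Y^2-\int_\Omega(AU,U)_{\R^2}dx-2\cdot 2^*_\mu\int_\Omega F(U)dx=0$ established in (i), one is left with $\|U_n-U\|_Y^2-2\cdot 2^*_\mu\int_\Omega F(U_n-U)dx=o(1)$, as required.

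The main obstacle is (i): proving that $U$ is a critical point of $J_s$. The three nonlinear terms in $J'_s$ are nonlocal and must be handled with care, since weak convergence in $Y(\Omega)$ alone is not enough to pass to the limit. The argument relies on identifying the correct dual pairing provided by the Hardy-Littlewood-Sobolev inequality, exactly as in \eqref{*}. Once this is in place, the rest of the argument is bookkeeping: the homogeneity of $F$ converts $J'_s(U)=0$ into the relation claimed in (i), and Proposition \ref{proposition4.6} delivers the Br\'ezis-Lieb splittings needed for (ii) and (iii).
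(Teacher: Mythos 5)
Your proposal is correct and follows essentially the same route as the paper: show $J'_s(U)=0$ by passing to the limit via the Hardy--Littlewood--Sobolev-based weak convergences, invoke Euler's identity for the $2\cdot 2^*_\mu$-homogeneous $F$ to get (i), and use the Hilbert-space Br\'ezis--Lieb identity together with Proposition \ref{proposition4.6} and strong $L^2$ convergence for (ii). For (iii) the paper works with $J'_s(U_n)(U_n-U)-J'_s(U)(U_n-U)=o(1)$ and separately verifies $\int_\Omega(\nabla F(U_n)-\nabla F(U),U_n-U)_{\R^2}dx\to 2\cdot 2^*_\mu\int_\Omega F(U_n-U)dx$, whereas you start from $\langle J'_s(U_n),U_n\rangle=o(1)$, apply Euler's identity, substitute the splittings established in (ii), and subtract the critical-point identity $\langle J'_s(U),U\rangle=0$; this is a slight reorganization that avoids the extra weak-convergence step for the gradient pairing, but relies on exactly the same ingredients and reaches the same conclusion.
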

\begin{proof}
i)
Since $|u_n|^{2^*_\mu}\rightharpoonup|u_0|^{2^*_\mu} ,|v_n|^{2^*_\mu}\rightharpoonup|v_0|^{2^*_\mu} \text{ in }
L^{\frac{2N}{2N-\mu}}(\Omega) \text{ as  } n\rightarrow \infty$, by \eqref{*}, we get
\begin{equation}\label{5.14}
\begin{split}
\nabla F(U_n)\rightharpoonup \nabla F(U) \text{ in } L^{\frac{2N}{N+2s}}(\Omega)\times
L^{\frac{2N}{N+2s}}(\Omega).
\end{split}
\end{equation}
So for any $\Theta \in Y(\Omega)$, $\int_{\Omega}(\nabla F(U_n),\Theta)_{\R^2}dx\rightarrow \int_{\Omega}(\nabla F(U),\Theta)_{\R^2}dx$, we have
\begin{equation}\label{5.15}
\begin{split}
J'_s(U_n)(\Theta) =o(1).
\end{split}
\end{equation}
Passing to the limit in \eqref{5.15} as  $n\rightarrow\infty$,  and
combining with the above convergences, we obtain
\begin{equation}\label{5.16}
\begin{split}
\langle U,\Theta \rangle_Y-\int_{\Omega}(AU,\Theta)_{\R^2}dx-\int_{\Omega}(\nabla F(U),\Theta)dx=0,  ~\forall\Theta
\in Y(\Omega)
\end{split}
\end{equation}
which means  $U$ is a weak solution of \eqref{1.1}.\\
Notice that the nonlinearity $F$ is $2\cdot2^*_\mu$-homogeneous, particularly,  we have
\begin{equation}\label{5.17}
(\nabla F(U),U)_{\R^2}=uF_u(U)+vF_v(U)=2\cdot2^*_\mu F(U),  ~\forall U=(u,v)\in \R^2.
\end{equation}
Combined with $J'_s(U)U=0$,
we reach the conclusion.\\
ii) By Lemma \ref{lemma5.3}, the sequence $U_n$  is bounded in $Y(\Omega)\hookrightarrow L^{2^*_s}(\Omega)\times
L^{2^*_s}(\Omega)$,  hence
$U_n$ is bounded in $L^{2^*_s}(\Omega)\times L^{2^*_s}(\Omega)$. Since $U_n\rightarrow U$  a.e. in $\Omega$,  by
the Br\'{e}zis-Lieb Lemma, we have
\begin{equation}\label{5.18}
\begin{split}
\|U_n\|_Y^2=\|U_n-U\|_Y^2+\|U\|_Y^2+o(1).
\end{split}
\end{equation}
\begin{equation}\label{5.19}
\begin{split}
\|U_n\|_{L^{2^*_s}}^2=\|U_n-U\|_{L^{2^*_s}}^2+\|U\|_{L^{2^*_s}}^2+o(1).
\end{split}
\end{equation}
By  Proposition \ref{proposition4.6}, we get
\begin{equation}\label{5.20}
\begin{split}
\int_{\Omega} F(U_n)dx=\int_{\Omega} F(U)dx+\int_{\Omega} F(U_n-U)dx+o(1), \text{ as }n\rightarrow\infty.
\end{split}
\end{equation}
Therefore, using that $U_n\rightarrow U $ in $L^r(\Omega)\times L^r(\Omega)$, for all $r\in [1,2^*_s)$,  by the
definition of $J_s$, \eqref{5.18}, \eqref{5.19} and \eqref{5.20},  we deduce ii).\\
iii) By \eqref{5.11}, \eqref{5.14} and \eqref{5.17}, we get
\begin{equation}\label{5.21}
\begin{split}
&\int_{\Omega}(\nabla F(U_n)-\nabla F(U), U_n-U)_{\R^2}dx\\
&=\int_{\Omega}(\nabla F(U_n), U_n)_{\R^2}dx-\int_{\Omega}(\nabla F(U), U)_{\R^2}dx+o(1)\\
&=2\cdot 2^*_\mu \int_{\Omega}F(U_n)dx-2\cdot 2^*_\mu \int_{\Omega}F(U)dx+o(1).
\end{split}
\end{equation}
Therefore, using \eqref{5.20}, we get
\begin{equation}\label{5.22}
\begin{split}
\int_{\Omega}(\nabla F(U_n)-\nabla F(U), U_n-U)_{\R^2}dx=2\cdot 2^*_\mu\int_{\Omega}F(U_n-U)dx+o(1).
\end{split}
\end{equation}
On the other hand,
\begin{equation}\label{5.23}
\begin{split}
o(1)&=J'_s(U_n)(U_n-U)=J'_s(U_n)(U_n-U)-J'_s(U)(U_n-U)\\
&=\langle U_n, U_n-U \rangle_Y-\int_{\Omega}(AU_n, U_n-U)_{\R^2}dx-\int_{\Omega}(\nabla F(U_n),
U_n-U)_{\R^2}dx\\
&~~~-\langle U, U_n-U \rangle_Y+\int_{\Omega}(AU, U_n-U)_{\R^2}dx+\int_{\Omega}(\nabla F(U),  U_n-U)_{\R^2}dx\\
&=\langle U_n-U, U_n-U \rangle_Y-\int_{\Omega}(A(U_n-U), U_n-U)_{\R^2}dx\\
&~~~-\int_{\Omega}(\nabla F(U_n)-\nabla F(U),  U_n-U)_{\R^2}dx.
\end{split}
\end{equation}
Hence, from \eqref{5.13} and \eqref{5.22},  it follows that
\begin{equation*}
\begin{split}
\|U_n-U\|_Y^2=2\cdot2^*_\mu\int_{\Omega} F(U_n-U)dx+o(1), \text{ as } n\rightarrow\infty.
\end{split}
\end{equation*}
This concludes the proof.
\end{proof}

\begin{lemma}\label{lemma5.5}
Let $N>2s$, $0<\mu<N$ and $\{U_n\}$ be a $(PS)_c$ sequence of $J_s$ with
\begin{equation}\label{5.24}
\begin{split}
c<\frac{N+2s-\mu}{2N-\mu}\left(\frac{\widetilde{S}^H_\xi}{2}\right)^{\frac{2N-\mu}{N+2s-\mu}}.
\end{split}
\end{equation}
Then, $\{U_n\}$ has a convergent subsequence.
\end{lemma}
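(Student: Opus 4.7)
The plan is to follow closely the template of Lemma \ref{lemma4.8}, but now exploiting the product Sobolev inequality associated with $\widetilde{S}^H_\xi$ from \eqref{5.2} in place of the one for $\widetilde{S}^H_s$. First I would invoke Lemma \ref{lemma5.3} to guarantee that $\{U_n\}$ is bounded in $Y(\Omega)$, so after passing to a subsequence there exists $U=(u_0,v_0)\in Y(\Omega)$ satisfying the convergences \eqref{5.10}--\eqref{5.13}. Lemma \ref{proposition5.4}(i) then identifies $U$ as a weak solution of \eqref{1.1} and gives $J_s(U)=(2^*_\mu-1)\int_\Omega F(U)\,dx\geq 0$.

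Next I would set $W_n:=U_n-U=(w_n,z_n)$ and apply the Br\'ezis--Lieb type splittings of Lemma \ref{proposition5.4}(ii)--(iii) to read off
\begin{equation*}
J_s(U_n)=J_s(U)+\tfrac{1}{2}\|W_n\|_Y^2-\int_\Omega F(W_n)\,dx+o(1),\qquad \|W_n\|_Y^2=2\cdot 2^*_\mu\int_\Omega F(W_n)\,dx+o(1).
\end{equation*}
Along a further subsequence these imply $\int_\Omega F(W_n)\,dx\to\ell$ for some $\ell\geq 0$ and $\|W_n\|_Y^2\to 2\cdot 2^*_\mu\,\ell$. Substituting back and passing to the limit in $J_s(U_n)\to c$ yields
\begin{equation*}
c=J_s(U)+(2^*_\mu-1)\ell\geq (2^*_\mu-1)\ell.
\end{equation*}

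The dichotomy step is the heart of the argument. Since $\int_\Omega F(W_n)\,dx=\frac{1}{2^*_\mu}[B(w_n,z_n)+\xi_1 B(w_n,w_n)+\xi_2 B(z_n,z_n)]$, the variational characterization \eqref{5.2} of $\widetilde{S}^H_\xi$ gives
\begin{equation*}
\|W_n\|_Y^2\geq \widetilde{S}^H_\xi\,\Bigl(2^*_\mu\int_\Omega F(W_n)\,dx\Bigr)^{\!1/2^*_\mu}.
\end{equation*}
Sending $n\to\infty$ produces $2\cdot 2^*_\mu\,\ell\geq \widetilde{S}^H_\xi\,(2^*_\mu\ell)^{1/2^*_\mu}$, so either $\ell=0$ or $\ell\geq \frac{1}{2^*_\mu}\bigl(\widetilde{S}^H_\xi/2\bigr)^{2^*_\mu/(2^*_\mu-1)}$. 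In the latter case, combining with $c\geq(2^*_\mu-1)\ell$ and using $\frac{2^*_\mu-1}{2^*_\mu}=\frac{N+2s-\mu}{2N-\mu}$ and $\frac{2^*_\mu}{2^*_\mu-1}=\frac{2N-\mu}{N+2s-\mu}$, I would conclude
\begin{equation*}
c\geq \frac{N+2s-\mu}{2N-\mu}\Bigl(\frac{\widetilde{S}^H_\xi}{2}\Bigr)^{\!\frac{2N-\mu}{N+2s-\mu}},
\end{equation*}
contradicting \eqref{5.24}. Hence $\ell=0$, which means $\|W_n\|_Y\to 0$, giving the desired strong convergence.

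The main technical obstacle I anticipate is making the algebraic threshold match the hypothesis \eqref{5.24} precisely; that requires carefully tracking the factor $2$ that arises because the relation $\|W_n\|_Y^2=2\cdot 2^*_\mu\int F(W_n)\,dx+o(1)$ carries a coefficient $2$ while the Sobolev estimate only involves $2^*_\mu\int F(W_n)\,dx$ inside the power $1/2^*_\mu$. Everything else (boundedness, weak-limit identification, Br\'ezis--Lieb splittings) has already been packaged in Lemmas \ref{lemma5.3} and \ref{proposition5.4}, so I expect no new analytic difficulty once the bookkeeping of exponents is done.
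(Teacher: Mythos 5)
Your proposal is correct and follows essentially the same route as the paper: boundedness from Lemma \ref{lemma5.3}, identification of the weak limit and the nonnegativity of $J_s(U)$ from Lemma \ref{proposition5.4}(i), the Br\'ezis--Lieb splittings from Lemma \ref{proposition5.4}(ii)--(iii), and the dichotomy driven by the $\widetilde{S}^H_\xi$ Sobolev inequality. The only difference is cosmetic: you track $\ell=\lim_n\int_\Omega F(U_n-U)\,dx$ while the paper tracks $L=\lim_n\|U_n-U\|_Y^2=2\cdot 2^*_\mu\,\ell$, and the exponent bookkeeping you worried about works out to exactly the same threshold.
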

\begin{proof}
We  assume that
\begin{equation}\label{5.26}
\begin{split}
\|U_n-U\|_Y^2\rightarrow L, \text{ as } n\rightarrow\infty.
\end{split}
\end{equation}
From Lemma \ref{proposition5.4} (iii),
\begin{equation}\label{5.27}
\begin{split}
2\cdot 2^*_\mu\int_{\Omega}F(U_n-U)dx \rightarrow L, \text{ as } n\rightarrow\infty
\end{split}
\end{equation}
and consequently $L\in [0, \infty)$. By the definition of $\widetilde{S}^H_\xi$,  we have
\begin{equation*}
\begin{split}
L\geq \widetilde{S}^H_\xi\left(\frac{L}{2}\right)^{\frac{1}{ 2^*_\mu}}
\end{split}
\end{equation*}
and consequently, either
\begin{equation*}
\begin{split}
L=0 \text{ or } L\geq \left(\frac{1}{2}\right)^{\frac{N-2s}{N+2s-\mu}}(\widetilde{S}^H_\xi)^{\frac{2N-\mu}{N+2s-\mu}}.
\end{split}
\end{equation*}
If $L\geq \left(\frac{1}{2}\right)^{\frac{N-2s}{N+2s-\mu}}(\widetilde{S}^H_\xi)^{\frac{2N-\mu}{N+2s-\mu}}$, from Lemma \ref{proposition5.4} (iii), it is follows that
\begin{equation*}
\begin{split}
\frac{1}{2}\|U_n-U\|_Y^2-\int_{\Omega} F(U_n-U)dx=\frac{N +2s-\mu}{2(2N-\mu)}\|U_n-U\|_Y^2+o(1).
\end{split}
\end{equation*}
Therefore, using  Lemma \ref{proposition5.4} (ii) and above equality, we see that
\begin{equation}\label{5.25}
\begin{split}
J_s(U)+\frac{N +2s-\mu}{2(2N-\mu)}\|U_n-U\|_Y^2&=J_s(U)+\frac{1}{2}\|U_n-U\|_Y^2-\int_{\Omega} F(U_n-U)dx+o(1)\\
&=J_s(U_n)+o(1)=c+o(1), \text{ as } n\rightarrow\infty.
\end{split}
\end{equation}
So
\begin{equation*}
\begin{split}
c&=J_s(U)+\frac{N +2s-\mu}{2(2N-\mu)}L\geq \frac{N +2s-\mu}{2(2N-\mu)}L\\
&\geq
\frac{N +2s-\mu}{2(2N-\mu)}\left(\frac{1}{2}\right)^{\frac{N-2s}{N+2s-\mu}}(\widetilde{S}^H_\xi)^{\frac{2N-\mu}{N+2s-\mu}}
=\frac{N +2s-\mu}{2N-\mu}\left(\frac{\widetilde{S}^H_\xi}{2}\right)^{\frac{2N-\mu}{N+2s-\mu}},
\end{split}
\end{equation*}
which contradicts \eqref{5.24}. Thus $L=0$ and therefore, by \eqref{5.26}, we have
\begin{equation*}
\begin{split}
\|U_n-U\|_Y^2\rightarrow 0, \text{ as } n\rightarrow\infty
\end{split}
\end{equation*}
and so the assertion of Lemma \ref{lemma5.5}  follows.
\end{proof}

\subsection{Linking geometry}

\begin{lemma}\label{lemma5.6}
If $\mathbb{F}$ is a finite dimensional subspace of $Y(\Omega)$,  then there exists $R>0$  large enough such that
 $J_s(u,v)\leq 0$, for all $(u,v)\in \mathbb{F}$ with $\|(u,v)\|_Y\geq R$ and $uv\neq0$.
\end{lemma}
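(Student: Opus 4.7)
The plan is to exploit positive homogeneity of the nonlinear term and compactness of the unit sphere in the finite-dimensional subspace $\mathbb{F}$.

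First I would define
\[
\phi(U) := \int_{\Omega} F(U)\, dx = \frac{1}{2^*_\mu}\bigl[B(u,v) + \xi_1 B(u,u) + \xi_2 B(v,v)\bigr], \quad U=(u,v)\in Y(\Omega),
\]
and observe two facts: $\phi$ is continuous on $Y(\Omega)$ (by the Hardy--Littlewood--Sobolev inequality and the continuous embedding $X(\Omega)\hookrightarrow L^{2N/(N-2s)}(\Omega)$), and it is positively $2\cdot 2^*_\mu$-homogeneous, i.e.\ $\phi(tU)=t^{2\cdot 2^*_\mu}\phi(U)$ for $t\ge 0$. Then, for any $U\in \mathbb{F}\setminus\{(0,0)\}$, writing $t=\|U\|_Y$ and $\widehat U = U/t$, one gets
\[
J_s(U) = \frac{t^2}{2}\bigl(1 - \smallint_{\Omega}(A\widehat U,\widehat U)_{\R^2}\,dx\bigr) - t^{2\cdot 2^*_\mu}\phi(\widehat U).
\]

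Next I would verify two bounds uniformly on the unit sphere $S_{\mathbb{F}}:=\{U\in\mathbb{F}:\|U\|_Y=1\}$. Since $\mathbb{F}$ is finite-dimensional, $S_\mathbb{F}$ is compact. By \eqref{2.5} and the $L^2$-estimate $\|U\|^2_{L^2}\le \lambda_{1,s}^{-1}\|U\|_Y^2$ on $X(\Omega)^2$, the map $U\mapsto \int_\Omega(AU,U)_{\R^2}\,dx$ is bounded on $S_\mathbb{F}$ by some constant $C_A$. The crucial bound is $\alpha:=\min_{S_\mathbb{F}}\phi>0$: $\phi$ is continuous on the compact set $S_\mathbb{F}$, so the minimum is attained; and $\phi(U)>0$ for every $U\ne (0,0)$ because $B(\cdot,\cdot)\ge 0$ always, while at least one of $u,v$ is not identically zero, and the corresponding diagonal term $\xi_1 B(u,u)$ or $\xi_2 B(v,v)$ is then strictly positive thanks to $\xi_1,\xi_2>0$.

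Combining these estimates gives, for every $U\in\mathbb{F}\setminus\{(0,0)\}$ with $t=\|U\|_Y$,
\[
J_s(U) \le \frac{1+C_A}{2}\, t^{2} - \alpha\, t^{2\cdot 2^*_\mu}.
\]
Since $2\cdot 2^*_\mu=\frac{2(2N-\mu)}{N-2s}>2$, the right-hand side is nonpositive for all $t\ge R$, once $R>0$ is chosen so that $\frac{1+C_A}{2}R^{2}\le \alpha R^{2\cdot 2^*_\mu}$. This yields the required $R$.

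The main obstacle is the positivity $\alpha>0$ on the sphere $S_\mathbb{F}$: it is precisely here that the assumption $\xi_1,\xi_2>0$ of the present case enters decisively, since without the diagonal terms $\phi$ could vanish along the ``axes'' $\{u=0\}$ and $\{v=0\}$ of $\mathbb{F}$. The hypothesis $uv\ne 0$ in the statement is then a convenient but not strictly necessary restriction for the argument above; the homogeneity and compactness make the rest routine.
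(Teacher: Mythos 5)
Your proposal is correct and, in fact, more complete than the paper's own argument. The paper's proof simply fixes one direction $(\widetilde{u_0},\widetilde{v_0})\in\mathbb{F}$ with $\widetilde{u_0}\widetilde{v_0}\neq0$, expands $J_s(t\widetilde{u_0},t\widetilde{v_0})$ as a difference of a quadratic term and a positive $t^{2\cdot 2^*_\mu}$-term, and concludes by taking $t$ large. That only establishes that along each ray with $uv\neq0$ the functional eventually becomes nonpositive; it does not, by itself, yield a single radius $R$ valid for every $(u,v)\in\mathbb{F}$ with $\|(u,v)\|_Y\geq R$, which is what the lemma claims. Your version supplies exactly the missing ingredient: compactness of the unit sphere $S_{\mathbb{F}}$, continuity of $\phi$, and strict positivity of $\phi$ on $S_{\mathbb{F}}$ (which relies decisively on $\xi_1,\xi_2>0$, as you correctly isolate) give a uniform lower bound $\alpha>0$ for the superquadratic term and a uniform upper bound $C_A$ for the linear part, from which the uniform $R$ follows. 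Your observation that the restriction $uv\neq0$ is then superfluous is also correct under $\xi_1,\xi_2>0$: the diagonal terms $\xi_1 B(u,u)$, $\xi_2 B(v,v)$ keep $\phi$ strictly positive even on the axes. So the underlying idea (homogeneity of the nonlinear term along rays) is the same as the paper's, but your argument is the rigorous version; the paper's, as written, is a sketch that leaves the uniformity implicit.
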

\begin{proof}
Choose $(\widetilde{u_0},\widetilde{v_0})\in \mathbb{F}$ with $\widetilde{u_0}\widetilde{v_0}\neq0$, then
\begin{eqnarray*}
\begin{aligned}
J_s(t\widetilde{u_0},t\widetilde{v_0})&=\frac{t^2}{2}\int_{\R^{2N}}\frac{|\widetilde{u_0}(x)-\widetilde{u_0}(y)|^2+|\widetilde{v_0}(x)-\widetilde{v_0}(y)|^2}{|x-y|^{N+2s}}dxdy\\
&~~~-\frac{t^2}{2}\int_{\R^N}(A(\widetilde{u_0},\widetilde{v_0}),(\widetilde{u_0},\widetilde{v_0}))dx-\frac{t^{2\cdot2^*_\mu}}{2^*_\mu}\Big(\int_{\Omega}\int_{\Omega}\frac{|\widetilde{u_0}(x)|
^{2^*_\mu}|\widetilde{v_0}(x)|^{2^*_\mu}}{|x-y|^\mu}dxdy\\
&~~~+\xi_1\int_{\Omega}\int_{\Omega}\frac{|\widetilde{u_0}(x)|^{2^*_\mu}|\widetilde{u_0}(x)|^{2^*_\mu}}{|x-y|^\mu}dxdy+\xi_2\int_{\Omega}\int_{\Omega}\frac{|\widetilde{v_0}(x)|^{2^*_\mu}|\widetilde{v_0}(x)|^{2^*_\mu}}{|x-y|^\mu}dxdy\Big),
\end{aligned}
\end{eqnarray*}
by choosing $t>0$ large enough, the assertion follows. This concludes the proof.
\end{proof}

\begin{lemma}\label{proposition5.7}
\begin{eqnarray}\label{5.28}
\begin{aligned}
\text{  If      } \lambda_{k-1,s}<\mu_1<\lambda_{k,s}\leq\mu_2<\lambda_{k+1,s}, \text{  for some  } k\geq 1,
\end{aligned}
\end{eqnarray}
 Then the functional $J_s$ satisfies:
\begin{enumroman}
\item \label{lemma 5.5.a} There exist $\alpha,\rho>0$ such that $J_s(u,v)\geq\alpha$ for all $(u,v)\in W$ with
    $\|(u,v)\|_Y=\rho$.
\item \label{lemma 5.5.b} If $Q=(V\cap \overline{B}_R(0))\oplus [0,R]e$, where $e\in W\cap \partial B_1(0)$ is a
    fixed vector, then $J_s(u,v)<0$ for
all  $(u,v)\in \partial Q$ and $R>\rho$ large enough.
\end{enumroman}
\end{lemma}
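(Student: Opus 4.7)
The plan is to check the two geometric hypotheses of the Linking Theorem for $J_s$ on the decomposition $\widetilde V := V\oplus\mathrm{span}(e)$.

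\emph{Verification of (i).} On $W=(\mathbb{P}_k)^2$, the variational characterisation of $\lambda_{k,s}$ gives $\|U\|_Y^2 \ge \lambda_{k,s}\bigl(\|u\|_{L^2}^2+\|v\|_{L^2}^2\bigr)$ for every $U=(u,v)\in W$. Coupling this with the pointwise bound $(AU,U)_{\R^2}\le \mu_2|U|^2$ from \eqref{2.5} shows that the quadratic part of $J_s$ is bounded below by $\tfrac12(1-\mu_2/\lambda_{k,s})\|U\|_Y^2$. The Sobolev-type inequality built into the definition \eqref{5.2} of $\widetilde S^H_\xi$ in turn yields
\[
\int_\Omega F(U)\,dx = \frac{1}{2^*_\mu}\bigl[B(u,v)+\xi_1 B(u,u)+\xi_2 B(v,v)\bigr] \le \frac{1}{2^*_\mu\,(\widetilde S^H_\xi)^{2^*_\mu}}\|U\|_Y^{2\cdot 2^*_\mu}.
\]
Since $2\cdot 2^*_\mu>2$, for $\rho=\|U\|_Y$ sufficiently small the quadratic part dominates and one obtains $J_s(U)\ge\alpha>0$.

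\emph{Verification of (ii).} Taken inside the finite-dimensional ambient space $\widetilde V$, the boundary $\partial Q$ decomposes into the base $V\cap\overline{B_R(0)}$ (the slice $r=0$), the top $(V\cap\overline{B_R(0)})+Re$ (the slice $r=R$), and the side $(V\cap\partial B_R(0))+[0,R]e$ (the slice $\|V_0\|_Y=R$). On the base, expansion of any $V_0=(v_0^{(1)},v_0^{(2)})\in V$ in the eigenfunction basis gives $\|V_0\|_Y^2\le \lambda_{k-1,s}\bigl(\|v_0^{(1)}\|_{L^2}^2+\|v_0^{(2)}\|_{L^2}^2\bigr)$, while \eqref{2.5} gives $\int_\Omega(AV_0,V_0)_{\R^2}\,dx\ge \mu_1\bigl(\|v_0^{(1)}\|_{L^2}^2+\|v_0^{(2)}\|_{L^2}^2\bigr)$; combined with $\mu_1>\lambda_{k-1,s}$ and $F\ge 0$ this yields $J_s(V_0)\le 0$. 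On the top and the side, the orthogonality $V\perp W$ together with $\|e\|_Y=1$ give $\|U\|_Y^2=\|V_0\|_Y^2+r^2\ge R^2$ for every $U=V_0+re$ in either piece, hence Lemma \ref{lemma5.6}, applied to the finite-dimensional subspace $\widetilde V$, yields $J_s(U)\le 0$ once $R$ is chosen large enough.

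The main obstacle is twofold. First, in part (i), the potentially resonant case $\mu_2=\lambda_{k,s}$ allowed by the hypothesis makes the prefactor $1-\mu_2/\lambda_{k,s}$ vanish on the finite-dimensional subspace of $W$ corresponding to the $\lambda_{k,s}$-eigenspace of the second component; there the quadratic part only yields a nonnegative bound, and one has to decompose $W$ along this resonant eigenspace and combine the strict inequality along its complement with the $\|U\|_Y^{2\cdot 2^*_\mu}$-scaling of the nonlinear term to still secure a uniform positive lower bound on $\partial B_\rho\cap W$. Second, Lemma \ref{lemma5.6} as stated excludes pairs with $uv\equiv 0$; however, since $\xi_1,\xi_2>0$, the terms $\xi_1 B(u,u)$ and $\xi_2 B(v,v)$ appear with positive weights in $F$ and, by equivalence of norms on the finite-dimensional $\widetilde V$, one obtains the lower bound $\int_\Omega F(U)\,dx \ge c\,\|U\|_Y^{2\cdot 2^*_\mu}$ on all of $\widetilde V$, so the argument of Lemma \ref{lemma5.6} applies even to those degenerate points of $\partial Q$ where a component of the pair vanishes identically.
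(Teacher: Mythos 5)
Your part (ii) is essentially the paper's argument: show $J_s\le 0$ on $V$ by the eigenvalue estimate $\lambda_{i,s}-\mu_1<0$ for $i\le k-1$, then invoke Lemma \ref{lemma5.6} on the finite-dimensional space $V\oplus\text{span}\{e\}$ for the top and side of $\partial Q$. Your observation that $\xi_1,\xi_2>0$ makes $\int_\Omega F(U)\,dx\ge c\|U\|_Y^{2\cdot 2^*_\mu}$ on \emph{all} of $V\oplus\text{span}\{e\}$ (so the restriction $uv\not\equiv 0$ in Lemma \ref{lemma5.6} is immaterial here) is correct and actually tidies up a point the paper leaves implicit.

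Part (i) is where the gap lies, and you have correctly identified its location but not its depth. Your main-body estimate uses the prefactor $1-\mu_2/\lambda_{k,s}$, which vanishes when $\mu_2=\lambda_{k,s}$; you then propose to decompose $W$ along the resonant eigenspace and recover positivity from the higher modes together with the superquadratic scaling. This is precisely what the paper does, writing $W=Z_k\oplus H$ with $Z_k=\text{span}\{(\varphi_{k,s},0),(0,\varphi_{k,s})\}$ and obtaining a lower bound of the form
\begin{equation*}
J_s(U)\ge \frac{1}{2}\Big(1-\frac{\mu_2}{\lambda_{k+1,s}}\Big)\|\overline U\|_Y^2+\frac{1}{2}\Big(1-\frac{\mu_2}{\lambda_{k,s}}\Big)\|U^k\|_Y^2-C\|U^k\|_Y^{2\cdot 2^*_\mu}-C\|\overline U\|_Y^{2\cdot 2^*_\mu}.
\end{equation*}
But at resonance $\mu_2=\lambda_{k,s}$ this decomposition cannot deliver a \emph{uniform} positive bound on $\partial B_\rho\cap W$, because the right-hand side reduces to $-C\|U^k\|_Y^{2\cdot 2^*_\mu}$ whenever $\overline U=0$, and this is not merely a loss in the estimate: take $(s_0,t_0)$ a unit eigenvector of $A$ for $\mu_2$ and put $U^k=c(s_0\varphi_{k,s},t_0\varphi_{k,s})\in Z_k\subset W$ with $c$ normalising $\|U^k\|_Y=\rho$. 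Then $\|U^k\|_Y^2=\int_\Omega(AU^k,U^k)_{\R^2}\,dx$ exactly, so the quadratic part of $J_s(U^k)$ is $0$, while $\int_\Omega F(U^k)\,dx>0$ because $\xi_1,\xi_2>0$; hence $J_s(U^k)<0$. So the statement $J_s\ge\alpha>0$ on $\partial B_\rho\cap W$ fails on $Z_k$ at resonance, and the fix you sketch (and the paper implements) cannot close the gap, since the $H$-component contributes nothing when $\overline U=0$. Your argument, like the paper's, is correct precisely in the non-resonant regime $\mu_2<\lambda_{k,s}$.
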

\begin{proof}
Considering the following subspaces $W=Z_k\oplus H$, where
\begin{eqnarray*}
\begin{aligned}
Z_k=\text{span}\{(\varphi_{k,s},0),(0,\varphi_{k,s})\} \text{ and } H=\text{ span
}\{\overline{(\varphi_{k+1,s},0),(0,\varphi_{k+1,s}),\ldots}\},
\end{aligned}
\end{eqnarray*}
we have that if $U\in W$, then $U=U^k+\overline{U}$ with $U^k\in Z_k$ and $\overline{U}\in H$.
Since $\|U\|_Y^2=\|U^k\|_Y^2+\|\overline{U}\|_Y^2$, by \eqref{2.5} and \eqref{4.21} we have
\begin{eqnarray*}
\begin{aligned}
J_s(U)\geq\frac{1} {2}(\|U^k\|_Y^2+\|\overline{U}\|^2_Y)-\frac{\mu_2}{2}(\|U^k\|_{(L^2)^2}^2+\|\overline{U}\|^2_{(L^2)^2})-C(\|U^k\|_Y^2+\|\overline{U}\|^2_Y)^{2^*_\mu},
\end{aligned}
\end{eqnarray*}
where $U=(u,v)$ and $C:=C(\xi_1,\xi_2)>0$ is a constant.
Therefore, using that $U^k\in Z_k\subset W$ and $\overline{U}\in H$, we obtain
\begin{eqnarray*}
\begin{aligned}
\|U^k\|_{(L^2)^2}^2\leq \frac{1}{\lambda_{k,s}}\|U^k\|_Y^2 \text{ and }  \|\overline{U}\|_{(L^2)^2}^2\leq
\frac{1}{\lambda_{k+1,s}}\|\overline{U}\|_Y^2.
\end{aligned}
\end{eqnarray*}
Consequently,
\begin{eqnarray}\label{5.29}
\begin{aligned}
J_s(U)&\geq
\left(\frac{1}{2}\|\overline{U}\|_Y^2-\frac{\mu_2}{2}\|\overline{U}\|_{(L^2)^2}^2\right)+\left(\frac{1}{2}\|U^k\|_Y^2-\frac{\mu_2}{2}\|U^k\|_{(L^2)^2}^2\right)\\
&~~~-C(\|U^k\|_Y^2+\|\overline{U}\|_Y^2)^{2^*_\mu}\\
&\geq\frac{1}{2}\left(1-\frac{\mu_2}{\lambda_{k+1,s}}\right)\|\overline{U}\|_Y^2+\frac{1}{2}\left(1-\frac{\mu_2}{\lambda_{k,s}}\right)\|U^k\|_Y^2-C\|U^k\|_Y^{2\cdot2^*_\mu}\\
&~~~-C\|\overline{U}\|_Y^{2\cdot2^*_\mu}.
\end{aligned}
\end{eqnarray}
Taking $\|U\|_Y=\rho$  small enough, since $\|U\|_Y^2=\|U^k\|_Y^2+\|\overline{U}\|_Y^2$, we get that
$\|U^k\|_Y:=y(\rho)$ and $\|\overline{U}\|_Y:=z(\rho)\equiv z$  are small enough. Now consider the function
\begin{eqnarray*}
\begin{aligned}
\alpha
(z)&=\frac{1}{2}\left(1-\frac{\mu_2}{\lambda_{k+1,s}}\right)z^2+\frac{1}{2}\left(1-\frac{\mu_2}{\lambda_{k,s}}\right)y(\rho)^2-C(y(\rho)^{2\cdot2^*_\mu}+z^{2\cdot2^*_\mu})\\
&=h(z)+\frac{1}{2}\left(1-\frac{\mu_2}{\lambda_{k,s}}\right)y(\rho)^2-Cy(\rho)^{2\cdot2^*_\mu},
\end{aligned}
\end{eqnarray*}
where $h(z)=\frac{1}{2}\left(1-\frac{\mu_2}{\lambda_{k+1,s}}\right)z^2-Cz^{2\cdot2^*_\mu}$.
By \eqref{5.28},  the maximum value of $h(z)$, for  $\rho$ sufficiently small, is given by
\begin{eqnarray*}
\begin{aligned}
\overline{h}:=\frac{N+2s-\mu}{2N-\mu}\Big(\frac{1}{2^*_\mu C}\Big)^{\frac{N-2s}{N+2s-\mu}}\Big(\frac{1}{2}(1-\frac{\mu_2}{\lambda_{k+1,s}})\Big)^{\frac{2N-\mu}{N+2s-\mu}}>0,
\end{aligned}
\end{eqnarray*}
which is independent of $\rho$ and it is assumed at
\begin{eqnarray*}
\begin{aligned}
\overline{z}:=\left(\frac{1}{2\cdot2^*_\mu C}\right)^{\frac{N-2s}{2(N+2s-\mu)}}\Big(1-\frac{\mu_2}{\lambda_{k+1,s}}\Big)^{\frac{N-2s}{2(N+2s-\mu)}}.
\end{aligned}
\end{eqnarray*}
Therefore, it is possible to choose $y(\rho)$ small enough, such that
\begin{equation*}
\alpha (\overline{z})=\overline{h}-cy(\rho)^2-Cy(\rho)^{2\cdot2^*_\mu}\geq \overline{h}-(c+C)y(\rho)^2>0,
\end{equation*}
where $c=\frac{1}{2}\left(\frac{\mu_2}{\lambda_k}-1\right)\geq0$.
Hence, by the estimate \eqref{5.29} and by the above information, for $\|U\|_Y=\rho$  small enough, there exists
$\alpha>0$ such that $J_s(U)\geq\alpha$. This proves the statement (i).\\
To prove  item (ii), we take $U=(u,v)\in V$, where $u=\Sigma^{k-1}_{i=1}u_i\varphi_{i,s}$,
 $v=\Sigma^{k-1}_{i=1}v_i\varphi_{i,s}$. Using  \cite[Proposition 9]{MRVariational}, we get
\begin{eqnarray*}
\begin{aligned}
\int_{\R^N}|u|^2dx=\Sigma^{k-1}_{i=1}u_i^2, \int_{\R^N}|v|^2dx=\Sigma^{k-1}_{i=1}v_i^2,
\end{aligned}
\end{eqnarray*}
also
\begin{eqnarray*}
\begin{aligned}
\int_{\R^N}
|(-\Delta)^{\frac{s}{2}}u|^2dx+\int_{\R^N}
|(-\Delta)^{\frac{s}{2}}v|^2dx&=\Sigma^{k-1}_{i=1}(u_i^2+v_i^2)\|\varphi_{i,s}\|_X^2\\&=\Sigma^{k-1}_{i=1}(u_i^2+v_i^2)\lambda_{i,s}.
\end{aligned}
\end{eqnarray*}
Then, using \eqref{2.5}, we are going to prove that $J_s(U)<0$ on $V$.
Let $U=(u,v)\in V$, since $\lambda_{k-1,s}<\mu_1<\lambda_{k,s}\leq \mu_2<\lambda_{k+1,s}$,  we have that
\begin{eqnarray*}
\begin{aligned}
J_s(u,v)&\leq\frac{1}{2}\Sigma^{k-1}_{i=1}(u_i^2+v_i^2)\lambda_{i,s}-\frac{\mu_1}{2}\Sigma^{k-1}_{i=1}(u_i^2+v_i^2)\\
&~~~-\frac{1}{2^*_\mu}\Big(\int_{\Omega}\int_{\Omega}\frac{|u(x)|
^{2^*_\mu}|v(x)|^{2^*_\mu}}{|x-y|^\mu}dxdy+\xi_1\int_{\Omega}\int_{\Omega}\frac{|u(x)|^{2^*_\mu}|u(x)|^{2^*_\mu}}{|x-y|^\mu}dxdy\\
&~~~+\xi_2\int_{\Omega}\int_{\Omega}\frac{|v(x)|^{2^*_\mu}|v(x)|^{2^*_\mu}}{|x-y|^\mu}dxdy\Big)\\
&\leq\frac{1}{2}\Sigma^{k-1}_{i=1}(u_i^2+v_i^2)(\lambda_{i,s}-\mu_1)<0.
\end{aligned}
\end{eqnarray*}
Now, to end the proof, it is enough to apply Lemma \ref{lemma5.6} to the finite dimensional subspace $V\oplus \text{
span }\{e\}$  containing $Q=(V\bigcap \overline{B}_R(0))\cap [0,R]e$, for some $e\in W\cap \partial B_1(0)$ and
$R>\rho$.
\end{proof}

\begin{Remark}\label{remark5.8}
Notice that, in  Lemma \ref{proposition5.7} we can choose the finite dimensional subspace $\mathbb{F}$ of $Y(\Omega)$ as
\begin{equation*}
\begin{split}
\mathbb{F}_\epsilon=V\oplus\text{ span }\{ e\}=V\oplus \text{ span }\{(\widetilde{z_\epsilon},0)\},
\end{split}
\end{equation*}
where $V=\text{ span
}\{(0,\varphi_{1,s}),(\varphi_{1,s},0),(0,\varphi_{2,s}),(\varphi_{2,s},0),\ldots,(0,\varphi_{k-1,s}),(\varphi_{k-1,s},0)\}$,
$\widetilde{z_\epsilon}=\frac{z_\epsilon}{\|z_\epsilon\|_X}$, with
$z_\epsilon=u_\epsilon-\Sigma^{k-1}_{j=1}(\int_{\Omega}u_\epsilon\varphi_{j,s}dx)\varphi_{j,s}$.
\end{Remark}

\begin{lemma}\label{lemma5.9}
Let $s\in (0,1)$, $N>2s$ and $M_\epsilon:=\max_{u\in G} S_{s,\mu_1}$, where $G:=\{u \in \mathbb{F}_\epsilon:
\int_{\Omega}\int_{\Omega}\frac{|u(x)|^{2^*_\mu}|u(y)|^{2^*_\mu}}{|x-y|^{\mu}}dxdy=1\}$.
Suppose $\lambda_{k-1,s}<\mu_1<\lambda_{k,s}\leq\mu_2<\lambda_{k+1,s}$,  for some $k\in \N$, then
\begin{enumroman}
\item \label{proposition 5.10.a}
$M_\epsilon $ is achieved by $u_M\in \mathbb{F}_\epsilon$ and $u_M$ can be written as follows
\begin{equation*}
\begin{split}
u_M=\nu+tu_\epsilon, \text{ with } \nu\in \text{ span }\{\varphi_{1,s},\varphi_{2,s},\ldots,\varphi_{k-1,s}\} \text{ and } t\geq0;
\end{split}
\end{equation*}

\item \label{proposition 5.11.b} $M_\epsilon<S^H_s$, provided\\
a)~$N\geq4s$ and $\mu_1>0$, or\\
b)~$2s<N< 4s$ and $\mu_1$  is large enough.\\
\end{enumroman}
\end{lemma}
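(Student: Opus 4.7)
Since $\mathbb{F}_\epsilon$ is finite-dimensional, all continuous norms on it are equivalent; in particular, by the Hardy--Littlewood--Sobolev inequality (Proposition \ref{Proposition 2.1}), the double-integral functional appearing in the definition of $G$ induces a norm on $\mathbb{F}_\epsilon$, so $G$ is compact. The functional $S_{s,\mu_1}$ is continuous on $\mathbb{F}_\epsilon\setminus\{0\}$, hence its supremum $M_\epsilon$ over $G$ is attained at some $u_M\in G$. Writing $u_M=\nu_0+b\widetilde{z_\epsilon}$ with $\nu_0\in\mathrm{span}\{\varphi_{1,s},\ldots,\varphi_{k-1,s}\}$ and $b\in\R$, and then substituting the explicit definition of $\widetilde{z_\epsilon}$ from Remark \ref{remark5.8}, one obtains the announced form $u_M=\nu+tu_\epsilon$ with $\nu\in\mathrm{span}\{\varphi_{1,s},\ldots,\varphi_{k-1,s}\}$ and $t=b/\|z_\epsilon\|_X\in\R$. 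Since $S_{s,\mu_1}$ is even, replacing $u_M$ (and correspondingly $\nu,t$) by their negatives if needed yields $t\geq 0$, proving part (i).

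For part (ii) the plan is to reduce the estimate for $u_M=\nu+tu_\epsilon$ to the one already carried out for $tu_\epsilon$ in Lemma \ref{lemma4.5}, by showing that the presence of $\nu$ either decreases the quotient or is absorbed in error terms of strictly smaller order. If $M_\epsilon\leq 0$ the conclusion is trivial, so we may assume the numerator is strictly positive; combined with the Rayleigh inequality $\|\nu\|_X^2\leq\lambda_{k-1,s}\|\nu\|_{L^2}^2$ and the hypothesis $\mu_1>\lambda_{k-1,s}$, this forces $t>0$. Expanding,
\begin{eqnarray*}
\|u_M\|_X^2-\mu_1\|u_M\|_{L^2}^2 &=& \left[\|\nu\|_X^2-\mu_1\|\nu\|_{L^2}^2\right]+t^2\left[\|u_\epsilon\|_X^2-\mu_1\|u_\epsilon\|_{L^2}^2\right]\\
&& +\,2t\left[\langle\nu,u_\epsilon\rangle_X-\mu_1(\nu,u_\epsilon)_{L^2}\right],
\end{eqnarray*}
the first bracket is non-positive by the spectral inequality just mentioned, while expanding $\nu=\sum_{j=1}^{k-1}a_j\varphi_{j,s}$ and using the H\"older regularity (hence $L^\infty$-boundedness) of the $\varphi_{j,s}$ together with Proposition \ref{Proposition4.2} (iv) gives $|(\varphi_{j,s},u_\epsilon)_{L^2}|,|\langle\varphi_{j,s},u_\epsilon\rangle_X|=O(\epsilon^{(N-2s)/2})$, so that the cross term is strictly subdominant to the quantities $\epsilon^{2s}$, $\epsilon^{2s}|\log\epsilon|$, or $\epsilon^{N-2s}$ which drove the estimates in Lemma \ref{lemma4.5}.

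For the denominator one expands $|\nu+tu_\epsilon|^{2\cdot 2^*_\mu}$ by a binomial-type formula and estimates every mixed integral via Proposition \ref{Proposition 2.1} and H\"older: the leading contribution $t^{2\cdot 2^*_\mu}\int_\Omega\int_\Omega|u_\epsilon(x)|^{2^*_\mu}|u_\epsilon(y)|^{2^*_\mu}|x-y|^{-\mu}\,dxdy$ is controlled from below by Proposition \ref{Proposition4.4}, while all mixed terms are absorbed in an $o(1)$ error from the concentration estimates of Proposition \ref{Proposition4.2}. Plugging these bounds into $S_{s,\mu_1}(u_M)$ and repeating the three-case algebraic reduction in the proof of Lemma \ref{lemma4.5}, enhanced by a non-positive spectral contribution and the above $o$-small cross terms, yields $M_\epsilon<S_s^H$ for $\epsilon>0$ small under either hypothesis (a) or (b). The principal obstacle will be the denominator estimate, since the nonlocal Choquard coupling between the smooth profile $\nu$ (spread over $\Omega$) and the concentrating family $u_\epsilon$ (bubbling at the origin) produces mixed integrals which resist any naive Br\'{e}zis--Lieb decomposition; handling them cleanly demands the systematic use of the Hardy--Littlewood--Sobolev inequality together with the sharp decay estimates of Proposition \ref{Proposition4.2}.
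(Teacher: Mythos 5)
Your part~(i) is essentially the paper's argument and is fine (the paper invokes Weierstrass directly, you invoke compactness of $G$; same thing). Your observation that the hypothesis $\mu_1>\lambda_{k-1,s}$ together with a positive numerator forces $t>0$ is a clean substitute for the paper's explicit $t=0$ case.

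Part~(ii), however, has a genuine gap in the order-of-magnitude bookkeeping. You expand the numerator and correctly identify the pure-$\nu$ bracket as non-positive and the cross term as $O(\epsilon^{(N-2s)/2})$ via $\|u_\epsilon\|_{L^1}=O(\epsilon^{(N-2s)/2})$, and then you assert that this cross term is ``strictly subdominant to the quantities $\epsilon^{2s}$, $\epsilon^{2s}|\log\epsilon|$, or $\epsilon^{N-2s}$.'' That is false in essentially every regime: for $4s<N\leq 6s$ one has $(N-2s)/2\leq 2s$, for $N=4s$ one has $\epsilon^{(N-2s)/2}=\epsilon^{s}\gg\epsilon^{2s}|\log\epsilon|$, and for $2s<N<4s$ one has $(N-2s)/2<N-2s$. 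So the cross term by itself is never smaller than the leading correction coming from Lemma~\ref{lemma4.5}, and treating the non-positive $\nu$-bracket merely as ``can be dropped'' throws away the ingredient that makes the proof work. The paper's actual mechanism is to \emph{combine} the two: writing the $\nu$-contribution as
\begin{equation*}
(\lambda_{k-1,s}-\mu_1)\|\nu\|_{L^2}^2 + O\bigl(\epsilon^{(N-2s)/2}\bigr)\|\nu\|_{L^2},
\end{equation*}
a parabola in $\|\nu\|_{L^2}$ with negative leading coefficient, and bounding it by its vertex value $\frac{1}{4(\mu_1-\lambda_{k-1,s})}O(\epsilon^{N-2s})=O(\epsilon^{N-2s})$. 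It is this quadratic improvement from $\epsilon^{(N-2s)/2}$ to $\epsilon^{N-2s}$ that makes the remainder genuinely subdominant when $N>4s$ and of matching (absorbable) order when $2s<N\leq 4s$. Your proposal never performs this completion-of-the-square, so the error estimate does not close.

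A secondary issue: for the denominator you propose to estimate $\iint|u_M(x)|^{2^*_\mu}|u_M(y)|^{2^*_\mu}|x-y|^{-\mu}\,dxdy$ from below by ``binomial-type'' expansion with mixed terms $o(1)$. Since $u_M\in G$ this quantity equals $1$ exactly; what is actually needed, and what the paper extracts from the constraint via convexity of $t\mapsto t^{2\cdot 2^*_\mu}$ and the estimate \eqref{5.31}, is the \emph{upper} bound $\iint|tu_\epsilon(x)|^{2^*_\mu}|tu_\epsilon(y)|^{2^*_\mu}|x-y|^{-\mu}\leq 1+C\|\nu\|_{L^2}O(\epsilon^{(N-2s)/2})$, which again feeds a $\|\nu\|_{L^2}$-linear error into the parabola. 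Moreover ``$o(1)$'' is far too coarse here: you need errors that beat $\epsilon^{2s}$ (or $\epsilon^{N-2s}$), not merely tend to zero. Also note that a naive pointwise expansion of $|\nu+tu_\epsilon|^{p}$ from below by $|tu_\epsilon|^{p}$ would require $\nu$ and $u_\epsilon$ to share a sign, which fails since $\nu$ is a combination of higher eigenfunctions; the convexity inequality the paper actually uses circumvents this.

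Finally, the argument needs $t$ bounded uniformly in $\epsilon$ (so that ``$2t\cdot O(\epsilon^{(N-2s)/2})$'' is really $O(\epsilon^{(N-2s)/2})$); this follows from the constraint together with the lower bound of Proposition~\ref{Proposition4.4}, but you should say so.
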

\begin{proof}
(i)  Thanks to the Weierstrass Theorem,  $M_\epsilon$  is achieved at $u_M$. Since $u_M\in \mathbb{F}_\epsilon$  and by the
definition of $\mathbb{F}_\epsilon$, we have that $u_M=\widetilde{\nu}+tz_\epsilon$,
for some $\widetilde{\nu}\in \text{ span }\{\varphi_{1,s},\varphi_{2,s},\ldots,\varphi_{k-1,s}\}$ and $t\geq 0$.  From the definition of $z_\epsilon$ in Remark \ref{remark5.8}, we have that
\begin{equation}\label{5.30}
\begin{split}
u_M=\nu+tu_\epsilon,
\end{split}
\end{equation}
where
\begin{equation*}
\begin{split}
\nu=\widetilde{\nu}-t\Sigma^{k-1}_{i=1}(\int_{\Omega}u_\epsilon\varphi_{i,s}dx)\varphi_{i,s}\in \text{ span }\{\varphi_{1,s},\varphi_{2,s},\ldots,\varphi_{k-1,s}\}.
\end{split}
\end{equation*}
(ii) First let $t=0$, then $u_M=\nu$ and
\begin{equation*}
\begin{split}
M_\epsilon=\|\nu\|^2-\mu_1\int_{\R^N}|\nu|^2dx\leq (\lambda_{k-1,s}-\mu_1)\|\nu\|^2_{L^2(\Omega)}<0<S_s^H.
\end{split}
\end{equation*}
Now, suppose $t>0$,
we find that $\widetilde{\nu}$ and $z_\epsilon$ are orthogonal in $L^2(\Omega)$, then $\|u_M\|^2_{L^2(\Omega)}=\|\widetilde{\nu}\|^2_{L^2(\Omega)}+\|z_\epsilon\|^2_{L^2(\Omega)}$. Since
$\int_{\Omega}\int_{\Omega}\frac{|u(x)|^{2^*_\mu}|u(y)|^{2^*_\mu}}{|x-y|^{\mu}}dxdy=1$,
using  \cite[Lemma 4.7]{MRmultip}, we get a constant $C_0>0$ (independent of $\epsilon$)   such that $\|u_M\|_{L^{2^*_\mu}(\Omega)}\leq C_0 $. Subsequently, using H\"{o}lder inequality, we get a constant $C_1>0$ (also independent of $\epsilon$ ) such that $\|u_M\|_{L^2(\Omega)}^2\leq C_1 $. Therefore, we can find $C_2>0$ such that $\|u_M\|_{L^2(\Omega)}^2$ and $\|\widetilde{\nu}\|^2_{L^2(\Omega)}$ are both uniformly bounded in $\epsilon$. By computations, we get
\begin{equation}\label{5.31}
\begin{split}
\|u_\epsilon\|^{\frac{3N-2\mu+2s}{N-2s}}_{L^{\frac{N(3N-2\mu+2s)}{(2N-\mu)(N-2s)}}(\Omega)}&=\left(\int_{\Omega}|u_\epsilon|^{\frac{N(3N-2\mu+2s)}{(2N-\mu)(N-2s)}}dx\right)^{\frac{2N-\mu}{N}}\\
&\leq \left(\int_{B_{2\delta}}|U_\epsilon|^{\frac{N(3N-2\mu+2s)}{(2N-\mu)(N-2s)}}dx\right)^{\frac{2N-\mu}{N}}\\
&\leq C_3 \epsilon^{\frac{N-2s}{2}}\left(\int_0^{\frac{2\delta}{\epsilon}}\frac{r^{N-1}}{(1+r^2)^{\frac{N(3N-2\mu+2s)}{(2N-\mu)(N-2s)}}}dr\right)^{\frac{2N-\mu}{N}}\leq O(\epsilon^{\frac{N-2s}{2}}),
\end{split}
\end{equation}
where $C_3>0$ is a constant.  Since $\varphi_{1,s},\varphi_{2,s},\ldots,\varphi_{k-1,s}\in L^\infty (\Omega)$, we have $\widetilde{\nu}\in L^\infty (\Omega)$. Using the fact that the map $t\mapsto t^{2\cdot 2^*_\mu}$ is convex, for $t> 0$ and  span $\{\varphi_{1,s},\varphi_{2,s},\ldots,\varphi_{k-1,s}\}$ is a finite dimensional space,  all norms are equivalent, we get
\begin{equation*}
\begin{split}
1&=\int_{\Omega}\int_{\Omega}\frac{|u_M(x)|^{2^*_\mu}|u_M(y)|^{2^*_\mu}}{|x-y|^\mu}dxdy=\int_{\Omega}\int_{\Omega}\frac{|(\nu+tu_\epsilon)(x)|^{2\cdot 2^*_\mu}}{|x-y|^\mu}dxdy\\
&\geq \int_{\Omega}\int_{\Omega}\frac{|tu_\epsilon(x)|^{2\cdot 2^*_\mu}}{|x-y|^\mu}dxdy+2\cdot 2^*_\mu \int_{\Omega}\int_{\Omega}\frac{|tu_\epsilon(x)|^{2\cdot 2^*_\mu-1}|\nu(x)|}{|x-y|^\mu}dxdy\\
&\geq \int_{\Omega}\int_{\Omega}\frac{|tu_\epsilon(x)|^{ 2^*_\mu}|tu_\epsilon(y)|^{ 2^*_\mu}}{|x-y|^\mu}dxdy\\
&~~~-2\cdot 2^*_\mu\|\nu\|_{L^\infty (\Omega)} \int_{\Omega}\int_{\Omega}\frac{|tu_\epsilon(x)|^{\frac{2\cdot 2^*_\mu-1}{2} }|tu_\epsilon(y)|^{\frac{2\cdot 2^*_\mu-1}{2} }}{|x-y|^\mu}dxdy\\
&\geq \int_{\Omega}\int_{\Omega}\frac{|tu_\epsilon(x)|^{ 2^*_\mu}|tu_\epsilon(y)|^{ 2^*_\mu}}{|x-y|^\mu}dxdy-C_4\|\nu\|_{L^2(\Omega)}\|u_\epsilon\|^{\frac{3N-2\mu+2s}{N-2s}}_{L^{\frac{N(3N-2\mu+2s)}{(2N-\mu)(N-2s)}}(\Omega)}.
\end{split}
\end{equation*}
Combining with \eqref{5.31} with  above inequality, we get
\begin{equation}\label{5.32}
\begin{split}
\int_{\Omega}\int_{\Omega}\frac{|tu_\epsilon(x)|^{ 2^*_\mu}|tu_\epsilon(y)|^{ 2^*_\mu}}{|x-y|^\mu}dxdy\leq 1+C_4\|\nu\|_{L^2(\Omega)}O(\epsilon^{\frac{N-2s}{2}}).
\end{split}
\end{equation}
Hence, using the definition of $S_{s,\mu_1}$ and \eqref{5.30}, we get
\begin{equation}\label{5.33}
\begin{split}
M_\epsilon&=\int_{\R^{2N}}\frac{|u_M(x)-u_M(y)|^2}{|x-y|^{N+2s}}dxdy-\mu_1\int_{\R^N}|u_M(x)|^2dx\\
&=\int_{\R^{2N}}\frac{|(\nu(x)+tu_\epsilon(x))-(\nu(y)+tu_\epsilon(y))|^2}{|x-y|^{N+2s}}dxdy-\mu_1\int_{\R^N}|\nu(x)+tu_\epsilon(x)|^2dx\\
&=\int_{\R^{2N}}\frac{|\nu(x)-\nu(y)|^2}{|x-y|^{N+2s}}dxdy+t^2\int_{\R^{2N}}\frac{|u_\epsilon(x)-u_\epsilon(y)|^2}{|x-y|^{N+2s}}dxdy\\
&~~~+2t\int_{\R^{2N}}\frac{|(\nu(x)-\nu(y))(u_\epsilon(x)-u_\epsilon(y))|}{|x-y|^{N+2s}}dxdy-\mu_1\int_{\R^N}|\nu(x)|^2dx-\mu_1t^2\int_{\R^N}|u_\epsilon(x)|^2dx\\
&~~~-2\mu_1t\int_{\R^N}|u_\epsilon(x)\nu(x)|dx\\
&\leq (\lambda_{k-1,s}-\mu_1)\|\nu\|^2_{L^2(\Omega)}+S_{s,\mu_1}(u_\epsilon)\left(\int_{\Omega}\int_{\Omega}\frac{|tu_\epsilon(x)|^{ 2^*_\mu}|tu_\epsilon(y)|^{ 2^*_\mu}}{|x-u|^\mu}dxdy\right)^\frac{N-2s}{2N-\mu}\\
&~~~+2t\int_{\R^{2N}}\frac{|(\nu(x)-\nu(y))(u_\epsilon(x)-u_\epsilon(y))|}{|x-y|^{N+2s}}dxdy-2\mu_1t\int_{\R^N}|u_\epsilon(x)\nu(x)|dx.
\end{split}
\end{equation}
Now we write $\nu=\Sigma^{k-1}_{i=1}\nu_i\varphi_{i,s}$ for some $\nu_i\in \R$, so that  $\|\nu\|^2_{L^2(\Omega)}=\Sigma^{k-1}_{i=1}\nu_i^2$.  By the H\"{o}lder inequality and the equivalence of the norms in a finite dimensional space,
\begin{equation*}
\begin{split}
|\langle u_\epsilon, \nu\rangle_X|&=\Sigma^{k-1}_{i=1}\lambda_{i,s}\nu_i\int_{\Omega}u_\epsilon(x)\varphi_{i,s}(x)dx\\
&\leq \Sigma^{k-1}_{i=1}\lambda_{i,s}|\nu_i|\|u_\epsilon\|_{L^1(\Omega)}\|\varphi_{i,s}\|_{L^\infty(\Omega)}\\
&\leq\widetilde{k}\lambda_{k,s}\|u_\epsilon\|_{L^1(\Omega)}\|\nu\|_{L^\infty(\Omega)}\\
&\leq \overline{k}\|u_\epsilon\|_{L^1(\Omega)}\|\nu\|_{L^2(\Omega)},
\end{split}
\end{equation*}
for suitable $\widetilde{k}$ and $\overline{k}>0$. More explicitly,
\begin{equation}\label{5.34}
\begin{split}
\left|\int_{\R^{2N}}\frac{(\nu(x)-\nu(y))(u_\epsilon(x)-u_\epsilon(y))}{|x-y|^{N+2s}}dxdy\right|\leq \overline{k}\|u_\epsilon\|_{L^1(\Omega)}\|\nu\|_{L^2(\Omega)}.
\end{split}
\end{equation}
Gathering the results in  \eqref{5.32}, \eqref{5.33} and \eqref{5.34}, using again the H\"{o}lder inequality and Proposition \ref{Proposition4.2} (iv), we get
\begin{equation*}
\begin{split}
M_\epsilon &\leq (\lambda_{k-1,s}-\mu_1)\|\nu\|^2_{L^2(\Omega)}+S_{s,\mu_1}(u_\epsilon)\left(1+C_4\|\nu\|_{L^2(\Omega)}O(\epsilon^{\frac{N-2s}{2}})\right)^{\frac{N-2s}{2N-\mu}}+2t\overline{k}\|u_\epsilon\|_{L^1(\Omega)}\|\nu\|_{L^2(\Omega)}\\
&~~~-2\mu_1t\|u_\epsilon\|_{L^1(\Omega)}\|\nu\|_{L^\infty(\Omega)}\\
&\leq (\lambda_{k-1,s}-\mu_1)\|\nu\|^2_{L^2(\Omega)}+S_{s,\mu_1}(u_\epsilon)\left(1+C_4\|\nu\|_{L^2(\Omega)}O(\epsilon^{\frac{N-2s}{2}})\right)^{\frac{N-2s}{2N-\mu}}+\kappa\|u_\epsilon\|_{L^1(\Omega)}\|\nu\|_{L^2(\Omega)}\\
&\leq (\lambda_{k-1,s}-\mu_1)\|\nu\|^2_{L^2(\Omega)}+S_{s,\mu_1}(u_\epsilon)(1+C_4\|\nu\|_{L^2(\Omega)}O(\epsilon^{\frac{N-2s}{2}}))+O(\epsilon^{\frac{N-2s}{2}})\|\nu\|_{L^2(\Omega)}.
\end{split}
\end{equation*}
Since the parabola $(\lambda_{k-1,s}-\mu_1)\|\nu\|^2_{L^2(\Omega)}+O(\epsilon^{\frac{N-2s}{2}})\|\nu\|_{L^2(\Omega)}$ stays always below its vertex, that is
\begin{equation*}
\begin{split}
(\lambda_{k-1,s}-\mu_1)\|\nu\|^2_{L^2(\Omega)}+O(\epsilon^{\frac{N-2s}{2}})\|\nu\|_{L^2(\Omega)}\leq \frac{1}{4(\lambda_{k-1,s}-\mu_1)}O(\epsilon^{N-2s})=O(\epsilon^{N-2s}).
\end{split}
\end{equation*}
From Lemma \ref{lemma4.5}, we get\\
Case 1: $N>4s$,
\begin{equation*}
\begin{split}
M_\epsilon &\leq
\left(S^H_s -\mu_1 C_s\epsilon^{2s}+O(\epsilon^{N-2s})\right)\left(1+C_4\|\nu\|_{L^2(\Omega)}O(\epsilon^{\frac{N-2s}{2}})\right)+(\lambda_{k-1,s}-\mu_1)\|\nu\|^2_{L^2(\Omega)}\\
&~~~+O(\epsilon^{\frac{N-2s}{2}})\|\nu\|_{L^2(\Omega)}\\
&\leq S^H_s -\mu_1 C_s\epsilon^{2s}+O(\epsilon^{N-2s})\\
&< S^H_s,
\end{split}
\end{equation*}
for sufficiently small $\epsilon>0$  and $\mu_1>0$.\\
Case 2: $N=4s$,
\begin{equation*}
\begin{split}
M_\epsilon &\leq \left(S_s^H -\mu_1 C_s\epsilon^{2s}|log\epsilon|+O(\epsilon^{2s})\right)\left(1+C_4\|\nu\|_{L^2(\Omega)}O(\epsilon^{\frac{N-2s}{2}})\right)+(\lambda_{k-1,s}-\mu_1)\|\nu\|^2_{L^2(\Omega)}\\
&~~~+O(\epsilon^{\frac{N-2s}{2}})\|\nu\|_{L^2(\Omega)}\\
&\leq S_s^H -\mu_1 C_s\epsilon^{2s}|log\epsilon|+O(\epsilon^{2s})\\
&<S_s^H,
\end{split}
\end{equation*}
for sufficiently small $\epsilon>0$ and $\mu_1>0$.\\
Case 3: $2s<N<4s$,
\begin{equation*}
\begin{split}
M_\epsilon &\leq \left(S_s^H +\epsilon^{N-2s}(O(1)-\mu_1 C_s)+O(\epsilon^{2s})\right)\left(1+C_4\|\nu\|_{L^2(\Omega)}O(\epsilon^{\frac{N-2s}{2}})\right)+(\lambda_{k-1,s}-\mu_1)\|\nu\|^2_{L^2(\Omega)}\\
&~~~+O(\epsilon^{\frac{N-2s}{2}})\|\nu\|_{L^2(\Omega)}\\
&\leq S_s^H +\epsilon^{N-2s}(O(1)-\mu_1 C_s)+O(\epsilon^{2s})\\
&<S_s^H,
\end{split}
\end{equation*}
for sufficiently small $\epsilon>0$ and $\mu_1$ large enough.

\end{proof}

\noindent\textbf{Proof of Theorem 1.4}
By Lemma \ref{lemma5.6} and Lemma \ref{proposition5.7}, we have $J_s$ satisfies the geometric structure of the Linking Theorem,
so the Linking critical level of $J_s$, i.e.
\begin{equation*}
\begin{split}
c=\inf_{h\in\Gamma}\max_{(u,v)\in Q}J_s(h(u,v)),
\end{split}
\end{equation*}
where
\begin{equation*}
\begin{split}
\Gamma=\{h\in C(\overline{Q},Y):h=id \text{ on } \partial Q\},
\end{split}
\end{equation*}
and
\begin{equation*}
\begin{split}
Q=(\overline{B}_R\cap V)\oplus\{r(\widetilde{z}_\epsilon,0):0<r<R\}.
\end{split}
\end{equation*}
Notice that, for all $h\in\Gamma$, we have
\begin{equation*}
\begin{split}
c=\inf_{h\in\Gamma}\max_{(u,v)\in Q}J_s(h(u,v))\leq \max_{(u,v)\in Q}J_s(h(u,v)).
\end{split}
\end{equation*}
Let $\mathbb{F}_\epsilon$ as in Remark \ref{remark5.8} with $\epsilon$ sufficiently small. Since $Q\subset (\mathbb{F}_\epsilon)^2$,  taking
$h=id$ and
recalling that $(\mathbb{F}_\epsilon)^2$ is a linear subspace, we obtain
\begin{equation*}
\begin{split}
c&\leq \max_{(u,v)\in (\mathbb{F}_\epsilon)^2,(u,v)\neq(0,0)}J_s(h(u,v))=\max_{(u,v)\in (\mathbb{F}_\epsilon)^2,\eta\neq
0}J_s(|\eta|(\frac{u}{|\eta|},\frac{v}{|\eta|}))\\
&=\max_{(u,v)\in (\mathbb{F}_\epsilon)^2,\eta> 0}J_s(\eta(u,v))\leq \max_{(u,v)\in (\mathbb{F}_\epsilon)^2,\eta\geq
0}J_s(\eta(u,v)).
\end{split}
\end{equation*}
We claim that
\begin{equation*}
\begin{split}
\max_{(u,v)\in (\mathbb{F}_\epsilon)^2,\eta\geq
0}J_s(\eta(u,v))<\frac{N+2s-\mu}{2N-\mu}\left(\frac{\widetilde{S^H_\xi}}{2}\right)^{\frac{2N-\mu}{N+2s-\mu}}.
\end{split}
\end{equation*}
To verify the Claim, fixed $U=(u,v)\in (\mathbb{F}_\epsilon)^2$ such that $uv\neq 0$, by \eqref{2.5}, for all $r\geq 0$,
we infer
\begin{equation*}
\begin{split}
J_s(rU)&\leq\frac{r^2}{2}(\|U\|_Y^2-\mu_1\|U\|^2_{(L^2(\Omega))^2})-\frac{r^{2\cdot 2^*_\mu}}{
2^*_\mu}\Big(\int_{\Omega}\int_{\Omega}\frac{|u(x)|
^{2^*_\mu}|v(x)|^{2^*_\mu}}{|x-y|^\mu}dxdy\\&+\xi_1\int_{\Omega}\int_{\Omega}\frac{|u(x)|^{2^*_\mu}|u(x)|^{2^*_\mu}}{|x-y|^\mu}dxdy
+\xi_2\int_{\Omega}\int_{\Omega}\frac{|v(x)|^{2^*_\mu}|v(x)|^{2^*_\mu}}{|x-y|^\mu}dxdy\Big)\\
&:=\frac{Ar^2}{2}-\frac{r^{2\cdot 2^*_\mu}B}{ 2^*_\mu}:=g(r).
\end{split}
\end{equation*}
Notice that $r_0=\left(\frac{A}{2B}\right)^{\frac{1}{2\cdot 2^*_\mu-2}}$  is the maximum point of $g(r)$,  which
maximum value is given by
\begin{equation*}
\begin{split}
\frac{N+2s-\mu}{2N-\mu}\left(\frac{A}{2B^{\frac{1}{2^*_\mu}}}\right)^{\frac{2^*_\mu}{ 2^*_\mu-1}}.
\end{split}
\end{equation*}
Then
\begin{equation*}
\begin{split}
\max_{r\geq0}J_s(rU)\leq
\frac{N+2s-\mu}{2N-\mu}\left\{\frac{\|U\|_Y^2-\mu_1\|U\|^2_{(L^2)^2}}{2(B(u,v)+\xi_1B(u,u)+\xi_2B(v,v))^{\frac{1}{2^*_\mu}}}
\right\}^{\frac{2^*_\mu}{ 2^*_\mu-1}},
\end{split}
\end{equation*}
Therefore,  it is sufficient to show that
\begin{equation*}
\begin{split}
\widetilde{M_\epsilon}:=\max_{(u,v)\in
(\mathbb{F}_\epsilon)^2}\frac{\|U\|_Y^2-\mu_1\|U\|^2_{(L^2)^2}}{2(B(u,v)+\xi_1B(u,u)+\xi_2B(v,v))^{\frac{1}{2^*_\mu}}}<\frac{1}{2}\widetilde{S^H_\xi}.
\end{split}
\end{equation*}
Define
\begin{equation*}
\begin{split}
M_\epsilon:&=\max_{u\in
\mathbb{F}_\epsilon\backslash\{0\}}\frac{\|u\|_X^2-\mu_1\|u\|^2_{L^2}}{(\int_{\Omega}\int_{\Omega}\frac{|u(x)|^{2^*_\mu}|u(y)|^{2^*_\mu}}{|x-y|^{\mu}}dxdy)^{\frac{1}{2^*_\mu}}}\\
&=\max_{u\in \mathbb{F}_\epsilon,\int_{\Omega}\int_{\Omega}\frac{|u(x)|^{2^*_\mu}|u(y)|^{2^*_\mu}}{|x-y|^{\mu}}dxdy=1
}(\|u\|_X^2-\mu_1\|u\|^2_{L^2}).
\end{split}
\end{equation*}
Taking $s_0,t_0>0$ as in Remark \ref{remark5.1} and $u_M$ as in Lemma \ref{lemma5.9}, then  $\widetilde{M_\epsilon}$ is achieved
by function $U_M=(s_0u_M, t_0u_M)$. Therefore, from $u_M$ as in Lemma \ref{lemma5.9} and using \eqref{5.4}, we can conclude that
\begin{equation*}
\begin{split}
\widetilde{M_\epsilon}&=\frac{1}{2}\frac{(s_0^2+t_0^2)\Big(\|(u_M,u_M)\|_Y^2-\mu_1\|(u_M,u_M)\|^2_{(L^2(\Omega))^2}\Big)}{\Big(s_0^{2^*_\mu}t_0^{2^*_\mu}+\xi_1s_0^{2\cdot2^*_\mu}+\xi_2t_0^{2\cdot2^*_\mu}\Big)^{\frac{1}{2^*_\mu}}B(u_M,u_M)^{\frac{1}{2^*_\mu}}}\\
&=\frac{1}{2}mM_\epsilon<\frac{1}{2}mS^H_s=\frac{1}{2}\widetilde{S^H_\xi},
\end{split}
\end{equation*}
if one of the following conditions holds\\
a)~$N\geq4s$ and $\mu_1>0$, or\\
b)~$2s<N< 4s$ and $\mu_1$  is large enough $(\mu_1> \lambda_{k-1,s}>0)$.\\
Now, using the Linking theorem and Lemma \ref{lemma5.5}, we conclude that problem \eqref{1.1} has a nontrivial solution with critical value $c\geq\alpha$.


\end{document}